\newtheorem{prop}{Proposition}[section]
\newtheorem{thm}[prop]{Theorem}
\newtheorem{cor}[prop]{Corollary}
\newtheorem{lem}[prop]{Lemma}
\theoremstyle{definition}
\newtheorem{que}[prop]{Question}
\newtheorem{defn}[prop]{Definition}
\newtheorem{expl}[prop]{Example}
\newtheorem{rem}[prop]{\it Remark}
\newtheorem{claim}[prop]{Claim}
\newtheorem*{claim*}{Claim}
\newcommand{\bP}{\mathbb{P}}
\newcommand{\bC}{\mathbb{C}}
\newcommand{\bR}{\mathbb{R}}
\newcommand{\bQ}{\mathbb{Q}}
\newcommand{\bZ}{\mathbb{Z}}
\newcommand{\tX}{\widetilde{X}}
\newcommand{\cO}{\mathcal{O}}
\newcommand{\cI}{\mathcal{I}}
\newcommand{\cF}{\mathcal{F}}
\newcommand{\Proj}{\mathbf{Proj}}
\newcommand{\Supp}{\mathrm{Supp}~}
\newcommand{\lct}{\mathrm{lct}}
\newcommand{\mult}{\mathrm{mult}}
\newcommand{\vol}{\mathrm{vol}}
\title{Fano varieties with large Seshadri constants}
\author{Ziquan Zhuang}
\address{Department of Mathematics, Princeton University, Princeton, NJ, 08544-1000.}
\email{zzhuang@math.princeton.edu}
\date{}
\subjclass[2010]{14J45 (primary), 14E99, 14C20 (secondary)}
\begin{document}

\maketitle

\begin{abstract}
We show that the set of Fano varieties (with arbitrary singularities) whose anticanonical divisors have large Seshadri constants satisfies certain weak and birational boundedness. We also classify singular Fano varieties of dimension $n$ whose anticanonical divisors have Seshadri constants at least $n$, generalizing an earlier result of Liu and the author.
\end{abstract}

%\tableofcontents

\section{Introduction}

Throughout this paper, we work over the field of complex numbers $\bC$.

Let $X$ be a projective variety and $L$ an ample line bundle on $X$. The Seshadri constants of $L$, originally introduced by Demailly \cite{demailly}, serve as a measure of the local positivity of the line bundle $L$.

\begin{defn}
Let $L$ be a nef $\bQ$-Cartier $\bQ$-divisor on a normal projective variety $X$ and $x\in X$ a smooth point. The \textit{Seshadri constant} of $L$ at $x$ is defined as
\[
  \epsilon(L,x):=\sup\{t\in\bR_{\ge 0}\mid \sigma^*L-tE\textrm{ is nef}\},
\]
where $\sigma:\mathrm{Bl}_x X\to X$ is the blow-up of $X$ at $x$, and $E$ is the exceptional divisor of $\sigma$. We also define $\epsilon(L)$ to be the maximum of $\epsilon(L,x)$ as $x$ varies over all smooth points of the variety.
\end{defn}

This invariant has many interesting properties. For example, lower bounds for Seshadri constants imply jet separation of adjoint linear series \cite{demailly}. It is also well known that the Seshadri constant of a divisor (viewed as a function on the smooth locus of the variety $X$) is lower semi-continuous and its maximum is attained at a very general point $x$ of $X$.

If $X$ is a complex Fano variety, i.e. $-K_X$ is $\bQ$-Cartier and ample, then it is natural to look at the Seshadri constant of the anticanonical divisor. While the Seshadri constants of ample divisors can {\it a priori} be arbitrarily large, this is not the case for $-K_X$ and it turns out that if $\epsilon(-K_X)$ is large then the choice of $X$ is quite restricted. Indeed, it is proved by Bauer and Szemberg \cite{bs} that if $X$ is a smooth Fano variety of dimension $n$ with $\epsilon(-K_X)>n$ then $X$ is isomorphic to the projective space (under a stronger assumption $\epsilon(-K_X)\ge n+1$ the same statement even holds in positive characteristic by Murayama \cite{takumi}). This is generalized recently to Fano varieties with klt singularities, i.e. $\bQ$-Fano varieties, by Y. Liu and the author:

\begin{thm} \cite{lz} \label{thm:P^n-weak}
Let $X$ be a Fano variety of dimension $n$ with klt singularities. Assume that $\epsilon(-K_X)>n$, then $X\cong \bP^n$.
\end{thm}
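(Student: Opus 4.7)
The strategy is to combine the Seshadri bound with Mori theory to show that rational curves through a very general point of $X$ behave like lines in $\mathbb{P}^n$, and then invoke a Cho--Miyaoka--Shepherd-Barron-type characterization of projective space. By the lower semicontinuity of Seshadri constants on the smooth locus, I may replace $x$ by a very general smooth point satisfying $\epsilon(-K_X,x)>n$. Since $X$ is a klt Fano variety, the cone theorem (Kollár--Mori) provides, through every point, a rational curve $C$ with $-K_X\cdot C\le n+1$. Taking such $C$ through $x$, the defining inequality
\[
  n \;<\; \epsilon(-K_X,x) \;\le\; \frac{-K_X\cdot C}{\mathrm{mult}_x C} \;\le\; \frac{n+1}{\mathrm{mult}_x C}
\]
forces $\mathrm{mult}_x C=1$ and $-K_X\cdot C\in(n,n+1]$; in particular $C$ is a free rational curve smooth at $x$.

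Next, the \emph{same} Seshadri bound applied to an arbitrary rational curve $C'\subset X$ through $x$ yields $-K_X\cdot C'>n\cdot\mathrm{mult}_x C'$, so every rational curve through $x$ of bounded anticanonical degree is smooth at $x$, and its degree exceeds $n$. Let $\mathcal H_x$ denote the family of free rational curves through $x$ of anticanonical degree $\le n+1$. Because no curve in $\mathcal H_x$ can split off a lower-degree rational curve through $x$ (such a component would violate the strict bound $-K_X\cdot(-)>n$), $\mathcal H_x$ is an unsplit dominating family of minimal rational curves, and the tangent map at $x$ realizes it in the variety of minimal rational tangents $\mathcal C_x\subseteq\mathbb{P}(T_xX)$.

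The final step is to show $\mathcal C_x=\mathbb{P}(T_xX)$ and conclude $X\cong\mathbb{P}^n$. For this I would invoke the klt generalization of the Cho--Miyaoka--Shepherd-Barron theorem (as in the work of Kebekus and Dedieu--Höring), whose hypotheses require a covering family of minimal rational curves that are smooth at a general point and of the expected anticanonical degree; both are provided by the previous paragraph. An alternative route, more Mori-theoretic, is to first reduce to Picard rank one by showing that any extremal contraction is prevented by the Seshadri hypothesis (the contracted curves would have too small $-K_X$-degree relative to their multiplicity at $x$), and then to classify $X$ directly via the structure of $\mathcal H_x$.

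The main obstacle is the classification step. In the smooth case, Bauer--Szemberg upgrade $-K_X\cdot C\in(n,n+1]$ to the equality $-K_X\cdot C=n+1$ by integrality of the intersection number and then apply CMSB; in the klt setting, $-K_X\cdot C$ is only rational, and one must instead work with the strict inequality $-K_X\cdot C>n$ combined with smoothness of $C$ at $x$ and the unsplit nature of $\mathcal H_x$. Handling the possible non-integrality of the anticanonical degree of minimal rational curves, and upgrading the VMRT-theoretic information in the presence of singularities, is where the klt generalization genuinely departs from the smooth argument.
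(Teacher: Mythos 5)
The theorem you are proving is cited from \cite{lz} rather than proved in the present paper, so there is no proof here to compare against line by line; but the present paper's own uses of the ideas (Corollary~\ref{cor:ft}, the proof of Theorem~\ref{main:P^n}, and the second proof in \S\ref{sec:izumi}) all run through the Koll\'ar--Shokurov connectedness lemma and isolated non-klt centers, or through the Izumi-type inequality of Lemma~\ref{lem:inequality}, and this is also the method in \cite{lz}. Your proposal instead follows the Bauer--Szemberg route through minimal rational curves and a CMSB-type characterization, and this route has a genuine gap that you identify but do not close.

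Concretely, the klt analogue of Cho--Miyaoka--Shepherd-Barron that you invoke does not exist in the generality you need. The results of Kebekus and of Dedieu--H\"oring require either smoothness, or that the minimal rational curves avoid the singular locus together with a degree bound $-K_X\cdot C\ge n+1$; for a klt $\bQ$-Fano variety $-K_X$ is only $\bQ$-Cartier, so the degree of a minimal rational curve is a rational number in $(n,n+1]$, not an integer, and integrality cannot be used to promote the strict inequality to $\ge n+1$. Saying ``one must instead work with the strict inequality\dots'' names the difficulty rather than resolves it. There are two further soft spots upstream of this. First, the existence of a rational curve through a very general point with $-K_X\cdot C\le n+1$ is Mori's bound from characteristic $p$ reduction, whose standard proof requires $X$ to be smooth along the curve; the cone theorem in the klt setting gives extremal rays of length $\le 2n$, which is a different statement, so you need to first reduce to the smooth locus or cite a singular version of Mori's bound. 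Second, the ``unsplit'' claim is not immediate in the klt setting: if a curve of degree $d\in(n,n+1]$ breaks as $C_1+\dots+C_r$ with $C_1$ through $x$, each $-K_X\cdot C_i$ is a positive rational number that can be arbitrarily small, so $-K_X\cdot C_1>n$ is not ruled out by the total degree bound and the argument needs the fixed denominator coming from a Cartier multiple of $-K_X$. By contrast, the approach via non-klt centers produces, from $\epsilon(-K_X,x)>n$, an effective $D\sim_\bQ -K_X$ with $\mult_x D>n$ and controlled multiplicities nearby so that $x$ becomes an isolated non-klt center (as in Corollary~\ref{cor:ft}); connectedness and vanishing then take over, and one never has to control families of rational curves in the singular locus, which is exactly where the CMSB route gets stuck.
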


In addition, \cite{lz} classifies $\bQ$-Fano varieties with $\epsilon(-K_X)=n$. It follows from the classification that although such $\bQ$-Fano varieties do not form a bounded family, they're all rational and their anticanonical volume $\left((-K_X)^n\right)$ is bounded from above by a constant that only depends on the dimension (note that a lower bound on Seshadri constant {\it a priori} would only imply a lower bound on the volume of the divisor). In other words, the set of $\bQ$-Fano varieties with $\epsilon(-K_X)=n$ is weakly and birationally bounded.

The purpose of this article is to generalize these findings (i.e. weak/birational boundedness) to (log) Fano varieties whose (log) anticanonical divisors have ``large'' Seshadri constants and to remove the assumption on singularities in the aforementioned results. To see the optimal assumption on Seshadri constant, we look at the following example:

\begin{expl} \cite[Example 21]{lz} \label{exa:wp}
Let $X$ be the weighted projective space $\bP(1,a_1,\cdots,a_n)$ where $a_1\le\cdots\le a_n$ and $x=[1:0:\cdots:0]$, then $\mathrm{Aut}(X)\cdot x$ is Zariski open in $X$, thus
\[
\epsilon(-K_X)=\epsilon(-K_X,x)=\frac{1}{a_n}(1+a_1+\cdots+a_n)
\]
Suppose $\epsilon(-K_X)>n-1+\epsilon$ for some $\epsilon>0$, then we have $\frac{1+a_1}{a_n}>\epsilon$, hence
\[
\left((-K_X)^n\right)=\frac{(1+a_1+\cdots+a_n)^n}{a_1\cdots a_n}
\]
is bounded from above by a constant depending only on $n$ and $\epsilon$. On the other hand, let $a_1=1$, $a_2=\cdots=a_n=d$, then $\epsilon(-K_X)=n-1+\frac{2}{d}>n-1$ and
\[
\left((-K_X)^n\right)=\frac{(2+(n-1)d)^n}{d^{n-1}}\rightarrow\infty
\]
as $d\rightarrow\infty$.
\end{expl}

It follows that we can only hope for a volume upper bound assuming $\epsilon(-K_X)>n-1+\epsilon$ for some fixed $\epsilon>0$. This is exactly the content of our first result:

\begin{thm} \label{thm:volbdd}
Let $\epsilon>0$, then there exists a number $M(n,\epsilon)>0$ depending only on $n$ and $\epsilon$ with the following property: if $D$ is an effective $\bQ$-divisor on a normal projective variety $X$ of dimension $n$ such that $L=-(K_X+D)$ is nef and $\epsilon(L)>n-1+\epsilon$, then $(L^n)\le M(n,\epsilon)$.
\end{thm}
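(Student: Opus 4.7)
I would proceed by contradiction: fix a very general smooth point $x\in X$ at which $\epsilon(L,x)>n-1+\epsilon$ and $x\notin\Supp(D)$, and suppose $(L^n)>M$ for some $M=M(n,\epsilon)$ to be specified. Since $L$ is nef and big, an asymptotic Riemann--Roch argument, comparing $h^0(mL)\sim\tfrac{(L^n)}{n!}m^n$ with the Hilbert function $\dim(\cO_X/\fm_x^{\lfloor\lambda m\rfloor})\sim\tfrac{(\lambda m)^n}{n!}$, produces an effective $\bQ$-divisor $\Delta\sim_\bQ L$ with $\mult_x\Delta$ arbitrarily close to $(L^n)^{1/n}>M^{1/n}$; in particular, for $M$ large this multiplicity far exceeds $n$.

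Set $t_0=\lct_x(X,D;\Delta)$. Since $x$ is a smooth point outside $\Supp(D)$, one has $t_0\le n/\mult_x\Delta\le n/M^{1/n}$, which is arbitrarily small once $M$ is large. After a standard tie-breaking perturbation, $(X,D+t_0\Delta)$ is log canonical at $x$ with a unique minimal lc center $V\ni x$, which may be taken normal. Kawamata's subadjunction then yields an effective $\bQ$-divisor $D_V$ on $V$ such that $(V,D_V)$ is klt and $-(K_V+D_V)\sim_\bQ(1-t_0)L|_V$ is nef. The Seshadri bound restricts: any curve in $V$ through $x$ is a curve in $X$ through $x$, so $\epsilon((1-t_0)L|_V,x)\ge(1-t_0)(n-1+\epsilon)>\dim V-1+\epsilon/2$ provided $t_0$ is small enough, which is achieved by taking $M$ large.

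The natural closure is induction on dimension. The base case $n=1$ is immediate: $\epsilon(L)>\epsilon>0$ forces $L$ ample on the curve $X$, and $L=-(K_X+D)$ then forces $X\cong\bP^1$ with $\deg L\le 2$. In the inductive step, the hypothesis applied to $(V,D_V)$ bounds the restricted volume $((1-t_0)L|_V)^{\dim V}$ by a constant depending only on $\dim V$ and $\epsilon/2$. The principal obstacle is bridging from this restricted bound on $V$ back to the total $(L^n)$ on $X$. I would attempt this by iterating the non-klt cut-down inside $V$, producing a chain $X=V_0\supset V_1\supset\cdots\supset V_k$ of normal non-klt centers with strictly decreasing dimensions and successive thresholds $t_0^{(j)}\lesssim n/(L|_{V_{j-1}})^{1/\dim V_{j-1}}$; careful bookkeeping of the intersection classes $[V_j]\cdot L^{n-\dim V_j}$ and the accumulated factors $\prod(1-t_0^{(j)})$ should eventually yield a bound on $(L^n)$ depending only on $n$ and $\epsilon$.

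The central technical hurdle is exactly this passage from restricted to total volume: one must ensure the iteration terminates in bounded depth, each successive minimal lc center is normal with strictly smaller dimension, the Seshadri inequality survives each restriction with enough margin to feed the next round of induction, and the intersection-theoretic bookkeeping across all levels correctly converts the bottom-level restricted bound into the desired global bound.
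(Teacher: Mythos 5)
Your strategy is genuinely different from the paper's, and it has a real gap precisely where you yourself flag the difficulty. The paper does not use induction on dimension or Kawamata subadjunction at all. It proves the moving Seshadri version (Theorem \ref{main:volbdd}) by a single application of the Koll\'ar--Shokurov connectedness lemma: write $-K_X\sim_\bQ A+E$ with $A$ ample, fix a very general $x$ and an arbitrary smooth $y$, and build a boundary $\Delta\sim_\bQ -(1-\delta)K_X$ (for small $\delta>0$) that is non-klt at both $x$ and $y$, with $x$ an \emph{isolated} point of $\mathrm{Nklt}(X,\Delta)$ and $-(K_X+\Delta)\sim_\bQ \delta A$ ample. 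The volume hypothesis supplies the non-klt point at $y$, while the Seshadri hypothesis supplies a divisor $H\in|-mK_X|$ with an isolated singularity of multiplicity $\ge m(n-1+\epsilon)$ at $x$ (via jet generation), which together with a second divisor of multiplicity close to $1-\epsilon/2$ at $x$ but below $1$ nearby forces the non-klt locus to be isolated at $x$ by Lemma \ref{lem:klt}. Two components contradict Lemma \ref{lem:connectedness}. There is no dimension descent, no chain of lc centers, and no restricted volume estimate anywhere.

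Your proposed bridge from the restricted bound on $V$ back to $(L^n)$ is the place where the argument does not close. First, the quantities you propose to track are not the right ones: $[V_j]\cdot L^{\,n-\dim V_j}$ is not a number unless $\dim V_j=n/2$, and the induction genuinely controls only the degree $(L|_{V_j})^{\dim V_j}=(L^{\dim V_j}\cdot V_j)$, which has no direct relationship with $(L^n)$ --- the cycle class of a minimal lc center is not bounded in terms of $\vol(L)$, and this is exactly the well-known obstruction in Angehrn--Siu/Helmke-type arguments. Second, even granting that the cut-down iteration terminates at a point, what you have produced at that stage is a boundary $\Delta\sim_\bQ tL$ with $t<1$ and an isolated non-klt center at $x$; to derive a contradiction from large volume you would still need a second non-klt center and a connectedness argument, at which point you have reconstructed the paper's proof in a more laborious form. (There are also secondary issues: subadjunction typically costs a small ample perturbation, which is delicate when $L$ is only nef, and one must ensure $x$ is a smooth point of the minimal lc center before comparing Seshadri constants via restriction.) I would recommend replacing the inductive scheme by the direct two-point argument: the Seshadri hypothesis is strong enough to manufacture an isolated non-klt center at one very general point in one step, which is the structural insight that makes the proof short.
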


As a corollary to the above theorem, we get birational boundedness of Fano varieties $X$ with $\epsilon(-K_X)>n-1+\epsilon$. Indeed, we can be more precise:

\begin{thm} \label{thm:birbdd}
Let $D$ be an effective $\bQ$-divisor on a normal projective variety $X$ of dimension $n$ such that $L=-(K_X+D)$ is nef. If $\epsilon(L)> n-1$, then $X$ is birational to a Fano variety $Y$ with terminal singularities such that $\epsilon(-K_Y)\ge\epsilon(L)$. In particular, $X$ is rationally connected.
\end{thm}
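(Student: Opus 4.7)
The strategy is to produce a birational map $\psi : Y \dashrightarrow X$ with $Y$ Fano and terminal, $\psi$ an isomorphism over a well-chosen general point $x \in X$, and $-K_Y \geq \widehat{L}$ as $\bQ$-divisors, where $\widehat{L}$ denotes the strict transform of $L$ on $Y$. Granted this, setting $y := \psi^{-1}(x)$ yields
\[
\epsilon(-K_Y) \geq \epsilon(-K_Y, y) \geq \epsilon(\widehat{L}, y) = \epsilon(L, x) = \epsilon(L),
\]
the middle inequality holding because at a point off the support of an effective divisor, adding that divisor to a nef one does not decrease the Seshadri constant there. Rational connectedness of $X$ then follows from that of $Y$ (Zhang; Hacon--McKernan), since $Y$ is a klt Fano and rational connectedness is a birational invariant.

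To construct $Y$, pick a very general smooth $x \in X$ at which $\epsilon(L,x) = \epsilon(L) > n-1$, and take a log resolution $\pi : W \to X$ of $(X, D)$ that is an isomorphism over a neighborhood of $x$. Writing $K_W + \pi_*^{-1}D = \pi^*(K_X+D) + \sum_i a_i E_i$ with $E_i$ the $\pi$-exceptional divisors and $a_i = a(E_i;X,D)$, a rearrangement gives
\[
-K_W = \pi^*L + B - F, \qquad B := \pi_*^{-1}D + \sum_{a_i \leq 0}(-a_i)E_i, \qquad F := \sum_{a_i > 0} a_i E_i,
\]
where $B$ is effective and $F$ is effective and $\pi$-exceptional. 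The goal is now to produce a birational contraction $\phi : W \dashrightarrow Y$ over $X$ that contracts precisely $\mathrm{Supp}(F)$ (the ``klt places'' of $(X, D)$), so that the pushforward gives $-K_Y \sim_{\bQ} \widehat{L} + \phi_* B$ with $\phi_* B$ effective.

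Operationally, truncate coefficients of $B$ that are $\geq 1$ down to $1-\delta$ to obtain a klt boundary $B_\delta$, and run the relative $(K_W + B_\delta)$-MMP over $X$ via BCHM. Relatively one has $K_W + B_\delta \equiv_{\pi} F + O(\delta)$, and since $F$ is $\pi$-exceptional, the MMP contracts the components of $F$; a subsequent $\bQ$-factorial terminalization produces a terminal model $Y$. The main obstacle is ensuring that $-K_Y$ is actually ample (so that $Y$ is genuinely Fano, not merely weak Fano) and that the map $\psi = \pi \circ \phi^{-1}$ remains an isomorphism near $x$ throughout the MMP. The first point leverages the bigness and size of $\pi^*L$ forced by $\epsilon(L) > n-1$, and may require passing to the anticanonical model by a further $-K$-MMP step at the end; the second point is automatic because every exceptional locus of the MMP lies over the singular locus of $(X, D)$, which is disjoint from $x$.
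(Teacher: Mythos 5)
Your approach is genuinely different from the paper's and has two substantive gaps.

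First, the inequality chain $\epsilon(-K_Y,y)\ge\epsilon(\widehat{L},y)=\epsilon(L,x)$ is not justified. The strict transform $\widehat{L}=\psi_*^{-1}L$ on $Y$ need not be nef --- nefness is not preserved under the birational contraction $W\dashrightarrow Y$ you construct (e.g.\ across flips or divisorial contractions whose exceptional loci are not disjoint from $\mathrm{Supp}(L)$) --- so $\epsilon(\widehat{L},y)$ is undefined in the sense you invoke, and even granting some ad hoc definition the claimed equality $\epsilon(\widehat{L},y)=\epsilon(L,x)$ would fail for curves through $y$ meeting the indeterminacy locus of $\psi$. The paper sidesteps precisely this difficulty by replacing ordinary Seshadri constants with \emph{moving} Seshadri constants, which make sense for arbitrary $\bQ$-Weil divisors and are shown to be monotone under birational contractions (Lemma~\ref{lem:nondec} and Corollary~\ref{cor:nondecrease}). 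That monotonicity is the mechanism by which the hypothesis $\epsilon(L)>n-1$ is transported along the MMP, and it is not available if you insist on ordinary Seshadri constants of strict transforms.

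Second, and more seriously, your construction does not address the Mori fiber space case, which is the crux of the paper's proof. Running the relative $(K_W+B_\delta)$-MMP over $X$ yields a model $Y\to X$ on which $K_Y+B_\delta^Y$ is nef over $X$; this gives no control whatsoever on the sign of $-K_Y$, and the subsequent \emph{global} $(-K)$-MMP you propose (in order to ``pass to the anticanonical model'') is exactly a $K$-MMP and hence, since $-K$ is big, terminates with a Mori fiber space $g:Y'\to Z$. This is a Fano variety only when $\dim Z=0$; otherwise nothing in your argument explains why $Y'$ should be replaced by a Fano with large Seshadri constant. The paper handles this case directly: after a terminal modification and a global $K$-MMP (not relative), if the Mori fiber space is not a point it invokes Lemma~\ref{lem:f&b} to conclude that the general fiber is $\bP^{n-1}$ and the base is $\bP^1$, so the variety is rational and $\bP^n$ serves as the sought-after terminal Fano model. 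That analysis --- using the Seshadri constant hypothesis to constrain the fiber and the canonical bundle formula to constrain the base --- is the substantial content of the theorem, and it is missing from your sketch. Relatedly, your claim that the relative MMP contracts ``precisely $\mathrm{Supp}(F)$'' is not established (the numerical class over $X$ is $F-G$ for a potentially large correction $G$ coming from the truncation, not $F+O(\delta)$), though even granting it the preceding gaps remain.
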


By the well known Borisov-Alexeev-Borisov conjecture (proved by Birkar \cite{birkar}), Fano varieties with terminal singulaities form a bounded family, hence the above theorem implies the birational boundedness of Fano varieties under the weaker assumption $\epsilon(-K_X)>n-1$. Note that the condition on Seshadri constant in the theorem is again sharp, since there are cubic hypersurfaces $X$ with $\epsilon(-K_X)=n-1$, e.g. $(x_0^3+x_1^3+x_2^3=0)\subseteq\bP^{n+1}$, yet $X$ is birational to the product of an elliptic curve with $\bP^{n-1}$ and therefore is not rationally connected. Nevertheless, we still have birational boundedness in the equality case:

%This example also suggests that while varieties $X$ with $\epsilon(-K_X)>n-1$ are birational to a bounded family of Fano type, those with $\epsilon(-K_X)=n-1$ should be birational to a bounded family of Calabi-Yau type. This is confirmed by the following result:

\begin{thm} \label{main:n-1}
Let $(X,D)$ be a pair such that $L=-(K_X+D)$ is nef and $\epsilon(L)\ge n-1$ where $n=\dim X$, then $X$ is birational to either a Fano variety $Y$ with canonical singularity such that $\epsilon(-K_Y)\ge n-1$ or the product of an elliptic curve with $\bP^{n-1}$.
\end{thm}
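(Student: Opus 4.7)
My plan is to construct the birational model $Y$ via an MMP on a log terminal modification of $(X,D)$, then distinguish cases based on whether the MMP terminates with a Fano endpoint or a Mori fiber space over a positive-dimensional base. The skeleton parallels Theorem \ref{thm:birbdd}; the new difficulty is the borderline case $\epsilon(L)=n-1$.

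First I would take a $\bQ$-factorial log terminal modification $\pi\colon (X',D')\to (X,D)$, so that $(X',D')$ is lc and $-(K_{X'}+D')=\pi^*L$ is nef. Since $(L^n)\ge\epsilon(L)^n>0$, $L$ is big, hence $-K_{X'}$ is big and a $K_{X'}$-MMP with scaling terminates with a Mori fiber space $f\colon Y\to Z$. Any $K_{X'}$-negative extremal ray $R$ satisfies $(K_{X'}+D')\cdot R\le 0$ by nefness of $-(K_{X'}+D')$, so every step is simultaneously a step of the $(K_{X'}+D')$-MMP, and on the output the pushforward $D_Y$ of $D'$ makes $(Y,D_Y)$ lc with $-(K_Y+D_Y)$ nef.

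If $\dim Z=0$, then $Y$ is $\bQ$-Fano. I would upgrade the singularities to canonical via a discrepancy tracking argument: an exceptional divisor over $Y$ with discrepancy in $[-1,0)$ would induce a place on $X$ whose center violates the Seshadri bound at a very general point, in the spirit of the inversion-of-adjunction argument behind Theorem \ref{thm:P^n-weak}. For $\epsilon(-K_Y)\ge n-1$, I would pick a very general $y\in Y$ at which the MMP is a local isomorphism to a neighborhood of a very general $x\in X$; curves through $y$ then correspond to curves through $x$ with the same multiplicities and intersections with $L$, and since $D_Y$ is effective with $y\notin D_Y$, one gets $\epsilon(-K_Y,y)\ge\epsilon(-(K_Y+D_Y),y)\ge\epsilon(L,x)\ge n-1$.

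If $\dim Z\ge 1$, let $F$ be a general fiber of dimension $d=n-\dim Z$. Then $(F,D_Y|_F)$ is lc with $-(K_F+D_Y|_F)$ nef, and restricting curves to $F$ yields $\epsilon(-K_F,x)\ge\epsilon(-(K_F+D_Y|_F),x)\ge n-1$ at a very general $x\in F$. If $d\le n-2$ this forces $\epsilon(-K_F)>d$, so $F\cong\bP^d$ by Theorem \ref{thm:P^n-weak}; computing $-(K_Y+D_Y)\cdot C$ for a curve $C$ lifted from $Z$ as a multisection then produces too small an intersection for the required Seshadri bound, ruling out $d\le n-2$. Hence $d=n-1$, $\dim Z=1$, and the equality classification from \cite{lz} combined with $D_Y|_F=0$ (forced by saturation of the Seshadri bound on $F$) gives $F\cong\bP^{n-1}$. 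Nefness of $-(K_Y+D_Y)$ evaluated on a section of $f$ rules out $g(Z)\ge 2$; if $Z\cong\bP^1$ then $Y$ is birational to $\bP^n$, which falls into the first case; so $Z$ must be an elliptic curve and $Y$ is birational to $E\times\bP^{n-1}$. The main obstacle throughout is the Seshadri transfer across the MMP, both to the Fano endpoint and to a general fiber, together with the delicate use of the equality classification of \cite{lz} to pin down the fiber when the Seshadri bound is saturated.
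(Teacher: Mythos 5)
The plan has the right shape but contains a genuine gap at its very first step, and several subsequent steps are not quite on solid footing.

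\textbf{The starting modification does not exist.} You begin by ``taking a $\bQ$-factorial log terminal modification $\pi\colon(X',D')\to(X,D)$, so that $(X',D')$ is lc.'' But the theorem imposes no singularity hypothesis on $(X,D)$, and with $\epsilon(L)\ge n-1$ the pair can fail to be lc (the paper explicitly constructs such examples in the proof of Theorem~\ref{main:fanotype}). A crepant $\bQ$-factorial modification with effective lc boundary only exists for lc pairs; for a non-lc pair there is no such thing. The paper instead applies \cite[Corollary 1.4.4]{bchm} to obtain $K_Y+\Gamma_1+\Gamma_2=\pi^*(K_X+D)$ with $(Y,\Gamma_1)$ klt and $\Gamma_2$ having coefficients $\ge 1$ (possibly $>1$), and the whole non-klt branch of the argument is built around this $\Gamma_2$.

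\textbf{The MMP is not the right one, and the Seshadri transfer is not justified.} Running a plain $K_{X'}$-MMP does not preserve nefness of $-(K_{X'}+D')$, and it also does not arrange for the bad part of the boundary to become ample on the fibers at the end. The paper instead runs a $(K_Y+\Gamma_1+M)$-MMP with $M\sim_\bQ\pi^*L$ a general effective divisor, precisely so that $G=f_*\Gamma_2$ is relatively ample for the resulting Mori fiber space $g\colon Z\to W$. This relative ampleness is the input to Lemma~\ref{lem:f&b}, which is the engine that either forces $W$ to be a point, or shows $F\cong\bP^{n-1}$ and $W\cong\bP^1$ (rational) or elliptic (giving $E\times\bP^{n-1}$). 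Your fiberwise argument does not have this relative ampleness, and your proposed intersection-count estimate on a multisection and the ``saturation of the Seshadri bound'' are not substantiated. Also, since $-(K_Y+D_Y)$ need not be nef on the output, the chain $\epsilon(-K_Y,y)\ge\epsilon(-(K_Y+D_Y),y)\ge\epsilon(L,x)$ in terms of ordinary Seshadri constants is not well-posed; the paper works with moving Seshadri constants $\epsilon_m$ and uses their monotonicity under birational contraction (Lemma~\ref{lem:nondec}) to do the transfer.

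\textbf{The canonicity upgrade is a separate MMP argument.} Your plan to upgrade from klt to canonical by ``discrepancy tracking / inversion of adjunction'' is not the paper's route and is left too vague. The paper extracts a single divisor $E$ with $-1<a(E;X,0)<0$ via \cite[Corollary 1.39]{mmp}, runs a $(-E)$-MMP, and again invokes Lemma~\ref{lem:f&b} and then Theorem~\ref{main:birbdd} to land on a terminal (hence canonical) Fano with $\epsilon(-K)\ge n-1$. Your version would need to be reworked along those lines, or via a fully different argument that you would have to spell out.

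In short: the skeleton (dichotomy by $\dim Z$) matches, but (i) the non-lc case is not handled because the claimed lc modification does not exist, (ii) the choice of MMP and the relative ampleness of the problematic boundary are essential and missing, (iii) the Seshadri transfer must be phrased with moving Seshadri constants, and (iv) the passage to canonical singularities needs a concrete argument such as the paper's second MMP.
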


Theorem \ref{thm:birbdd} and \ref{main:n-1} are in fact statements about the possible outcomes of certain Minimal Model Programs (MMP), which is a powerful tool in birational geometry. Since we don't impose any assumption on the singularities of the pair in these theorems, a natural first step is to replace the pair $(X,D)$ by a terminal modification $(Y,\Delta)$. Although $Y$ is not Fano in general, we may run the MMP from $Y$ in the hope of obtaining a birational equivalence $Y\dashrightarrow Y'$ where $Y'$ is a terminal Fano variety and then anylyze the cases when this fails. To do this, we need to keep track of the information about Seshadri constants, which causes a problem. If $\phi:Y_1\dashrightarrow Y_2$ is a step in the MMP and $L$ is an ample divisor on $Y_1$, then $\phi_*L$ may not be ample on $Y_2$ and therefore $\epsilon(\phi_*L)$ is not well-defined. Indeed $\phi_*L$ is never ample if $\phi$ is a flip.

The solution to such issue is given by the moving Seshadri constants, first introduced in \cite{nakamaye}, which generalizes the notion of Seshadri constants to arbitrary line bundles. Recall that by a result of Demailly \cite[Theorem 6.4]{demailly}, the Seshadri constant of a nef and big line bundle $L$ at a general point $x\in X$ measures the asymptotic generation of jets at $x$ by sections of $L^{\otimes m}$. In other words, we have
\begin{equation} \label{eq:s}
    \epsilon(L,x)=\limsup_{m\rightarrow\infty}\frac{s(L^{\otimes m},x)}{m}
\end{equation}
where for a coherent sheaf $\cF$ on $X$, $s(\cF,x)$ is the largest integer such that the natural map
$H^{0}(\cF)\rightarrow H^{0}(\cF\otimes\cO_X/\mathfrak{m}_{x}^{s+1})$
is surjective (if $\cF$ is not generated by global sections at $x$ we put $s(\cF,x)=-1$). Now the expression on the right hand side makes sense for any line bundle $L$ or even any Weil $\bQ$-divisor, and we call it the \emph{moving Seshadri constant} of $L$ at $x$, denoted by $\epsilon_m(L,x)$ (this terminology first appears in \cite{nakamaye}; by \cite[Proposition 6.6]{elmn}, our definition agrees with the one in \cite{nakamaye} when $L$ is $\bQ$-Cartier). As before, we define $\epsilon_m(L)$ to be the maximum of $\epsilon_m(L,x)$ among all smooth points $x$.

It is natural to rephrase the above theorems in terms of moving Seshadri constants (see Theorem \ref{main:volbdd} and \ref{main:birbdd}) and except for Theorem \ref{main:n-1}, what we will actually prove are precisely these generalized versions. Using similar ideas, we also find that Fano varieties $X$ with $\epsilon(-K_X)\ge n$ automatically have klt singularities.

\begin{thm} \label{main:P^n}
Let $X$ be a normal projective variety of dimension $n$ such that $\epsilon_{m}(-K_X)>n$, then $X\cong\bP^n$.
\end{thm}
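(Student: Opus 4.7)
The strategy is to bootstrap the klt case (Theorem~\ref{thm:P^n-weak}) via the birational boundedness result Theorem~\ref{main:birbdd}, leveraging the strict threshold $\epsilon_m(-K_X)>n$ (rather than merely $>n-1$) to promote a birational equivalence to an actual isomorphism.

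First, since $\epsilon_m(-K_X)>n>0$, in particular $-K_X$ is big. By Theorem~\ref{main:birbdd} (the moving-Seshadri version of Theorem~\ref{thm:birbdd}), $X$ is birational to a Fano variety $Y$ with terminal singularities satisfying $\epsilon(-K_Y)\ge\epsilon_m(-K_X)>n$. Theorem~\ref{thm:P^n-weak} then identifies $Y\cong\bP^n$.

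Next, at a very general smooth point $x\in X$ I produce a $\bQ$-divisor $\Delta\sim_\bQ -\lambda K_X$ with $\lambda<1$ and $\mult_x\Delta>n$, using the jet-separation characterization \eqref{eq:s}. Concretely, for any $\epsilon'\in(n,\epsilon_m(-K_X,x))$ and $m\gg 0$, surjectivity of $H^0(-mK_X)\to H^0(-mK_X\otimes\cO_X/\fm_x^{\lceil m\epsilon'\rceil+1})$ yields $h^0(-mK_X)\ge\binom{\lceil m\epsilon'\rceil+n}{n}>\binom{mn+n}{n}$ for $m$ large; a standard linear-algebra count then produces $D_m\in|-mK_X|$ with $\mult_x D_m>mn$, and rescaling by a factor slightly less than $1$ gives the desired $\Delta$. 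Since $X$ is smooth at $x$, this forces $(X,\Delta)$ to be non-log-canonical at $x$, with a non-klt centre through $x$.

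Finally, I compare the two pictures. Tie-breaking $\Delta$ against the bigness of $-K_X$ isolates the non-klt centre at $x$. Any nontrivial step in the MMP connecting a terminal modification of $X$ to $Y\cong\bP^n$ (or any divisor extracted by that terminalization itself) would either destroy this isolated centre, or drop the moving Seshadri constant at the image of $x$ by an amount exceeding the available slack $\epsilon_m(-K_X,x)-n$. Hence the MMP and the terminalization are both trivial, giving $X\cong Y\cong\bP^n$.

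The heart of the argument is this last step: quantifying how each MMP transformation affects the moving Seshadri constant and the isolated non-klt centre at the image of a very general point. The proofs of Theorems~\ref{thm:volbdd} and \ref{thm:birbdd} only require control of the volume and the existence of a birational model, whereas upgrading a birational equivalence to an isomorphism requires a sharper discrepancy-type estimate that uses $\mult_x\Delta>n$ on the nose, corresponding exactly to the strengthening of the threshold from $>n-1$ to $>n$.
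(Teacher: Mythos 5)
Your proposal has two serious problems: a circular dependence, and an unsubstantiated final step whose proposed mechanism is, as stated, backwards.

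\textbf{Circularity.} You invoke Theorem~\ref{main:birbdd} as a black box, but in the paper Theorem~\ref{main:birbdd} is proved \emph{after} Theorem~\ref{main:P^n} and uses it: the Mori-fiber-space case of Theorem~\ref{main:birbdd} cites Lemma~\ref{lem:f&b}, which identifies the general fibre $F$ as $\bP^{n-1}$ precisely via Theorem~\ref{main:P^n}, and the proof of Theorem~\ref{main:birbdd} again appeals to Theorem~\ref{main:P^n} to conclude $\epsilon_m(-K_{X'})\le n$ when $X'$ is not Fano. Without setting up an explicit induction on dimension (as the paper's second proof of Theorem~\ref{main:P^n} in \S\ref{sec:izumi} does), your argument uses what it is trying to prove.

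\textbf{The final step.} You say that any nontrivial MMP step or terminalizing extraction ``would either destroy this isolated centre, or drop the moving Seshadri constant at the image of $x$ by an amount exceeding the available slack.'' Neither mechanism is substantiated, and the second is inverted: Corollary~\ref{cor:nondecrease} says moving Seshadri constants of $-K$ never \emph{decrease} under a birational contraction, so an MMP step cannot ``drop'' it. Destroying a non-klt centre of some auxiliary pair $(X,\Delta)$ is not in itself a contradiction with anything. You flag this as ``the heart of the argument'' but give no estimate actually relating the discrepancy of a contracted divisor to $\epsilon_m(-K_X,x)-n$.

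What the paper actually does is structurally different and closes the gap cleanly. The isolated non-klt centre you construct (this part of your proposal is correct and is exactly the paper's Corollary~\ref{cor:ft}) is used not to control the MMP directly but, via the connectedness lemma (Lemma~\ref{lem:connectedness}) and a tie-break, to conclude that $X$ \emph{is of Fano type}. Being a Mori dream space, $X$ (after passing to a small $\bQ$-factorial modification) admits a $(-K_X)$-MMP $f:X\dashrightarrow Y$ terminating in a $\bQ$-Fano $Y$; Corollary~\ref{cor:nondecrease} gives $\epsilon(-K_Y)>n$ and hence $Y\cong\bP^n$ by Theorem~\ref{thm:P^n-weak}. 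The key promotion from birational to isomorphic is then a one-line discrepancy observation: $f$ being a $(-K_X)$-MMP forces $K_X+D=f^*K_Y$ with $D\ge0$ supported on $\Ex(f)$, i.e.\ $a(E;Y,0)\le 0$ for every $f$-exceptional divisor $E$, which contradicts terminality of $\bP^n$ unless $f$ is small. That crepant pullback plus terminality argument is the ingredient your sketch is missing, and it is what makes the extra strength $\epsilon_m(-K_X)>n$ (rather than $>n-1$) genuinely pay off.
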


\begin{thm} \label{main:fanotype}
Let $X$ be a normal projective variety of dimension $n$ with $\epsilon_{m}(-K_X)=n$. Assume that one of the following holds:
    \begin{enumerate}
        \item $\vol(-K_X)>n^n$,
        \item $X$ is a surface, or
        \item there is an effective $\bQ$-divisor $D$ such that $L=-(K_X+D)$ is nef and $\epsilon(L)\ge n$.
    \end{enumerate}
Then $X$ is of Fano type. On the other hand, for any $\epsilon>0$ there exists $n$-dimensional Fano varieties $X$ with worse than log canonical singularities such that $\epsilon(-K_X)>n-\epsilon$.
\end{thm}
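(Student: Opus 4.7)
\emph{Positive direction, unified strategy.} I would show, in each of the cases (1)--(3), that one can produce for every $\delta>0$ an effective $\bQ$-divisor $\Gamma\sim_{\bQ}-K_X$ with $\mult_x\Gamma\ge n-\delta$ at a very general smooth point $x\in X$. Given such a $\Gamma$ the log canonical threshold $\lct(X,\Gamma;x)\le n/(n-\delta)$ is arbitrarily close to $1$; after a tie-breaking perturbation involving a small auxiliary ample divisor, Koll\'ar--Shokurov-type connectedness yields a minimal non-klt center $Z\ni x$, which the multiplicity bound forces to be $\{x\}$. Kawamata subadjunction then gives that $X$ is klt at $x$, and varying $x$ over a very general locus combined with a short MMP (plus the existence of klt $\bQ$-complements) upgrades klt-at-a-point to a global Fano-type structure.

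\emph{Producing $\Gamma$.} Case (1) is direct: the strict bound $\vol(-K_X)>n^n$ gives $\Gamma$ by a Riemann--Roch count of $h^0(-mK_X)$ against the dimension of $mn$-jets at $x$. Case (3) uses the decomposition $-K_X=L+D$: since $\vol((1+\delta)L)\ge(1+\delta)^n n^n>n^n$, the same count produces $F\sim_{\bQ}(1+\delta)L$ with $\mult_x F>n$, and $\Gamma:=(F+(1+\delta)D)/(1+\delta)\sim_\bQ -K_X$ has $\mult_x\Gamma>n/(1+\delta)$. Case (2) lies on the boundary and only admits the non-strict bound $\vol(-K_X)\ge 4$, so I would instead appeal to the classification of normal projective surfaces with anticanonical Seshadri constant $2$: a direct analysis of discrepancies on a minimal log resolution, combined with Shokurov's theorem on the existence of a $1$-complement, gives the Fano-type conclusion. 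For the sharpness statement, I would construct singular Fanos $X_\epsilon$ as weighted projective spaces $\bP(1,a_1,\ldots,a_n)$ or projective cones over lower-dimensional Fanos with carefully tuned polarization, verifying as in Example~\ref{exa:wp} that $\epsilon(-K_{X_\epsilon})\to n$ while the vertex degenerates past log canonical.

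\emph{Main obstacle.} The principal difficulty is the passage from ``klt at a very general point'' to the global Fano-type conclusion: the Koll\'ar--Shokurov production of a non-klt center is inherently local near $x$, and bridging to a klt $\bQ$-complement on $X$ is where each of the three extra hypotheses is needed. The surface case (2) is the most delicate of all, because no strict volume inequality is available and the general dimension-count argument for producing $\Gamma$ degenerates at the threshold.
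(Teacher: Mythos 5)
Your unified strategy diverges substantially from the paper's argument, and case~(3) as you present it contains a genuine gap.

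\emph{Case (3).} Your production of $\Gamma\sim_\bQ -K_X$ with $\mult_x\Gamma>n/(1+\delta)$ only gives multiplicity \emph{strictly below} $n$ at a very general $x$ (and $\mult_x D=0$ there, so $D$ contributes nothing). In particular $\lct_x(X,\Gamma)>1$, so rescaling $\Gamma$ to log canonical threshold replaces $-K_X$ by a strictly larger anti-adjoint class; after that $-(K_X+\lct\cdot\Gamma)$ is not ample and the Koll\'ar--Shokurov tie-break cannot produce a non-klt center through $x$. Note that under hypothesis (3) one only knows $\vol(-K_X)\ge n^n$, not the strict inequality, so the boundary case you flag for surfaces is in fact already present in (3). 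The paper handles (3) by an entirely different route (Lemma~\ref{lem:pair=n}): a dlt modification of $(X,D)$, a $(K+\Gamma_1+\pi^*L)$-MMP terminating in a Mori fiber space, Theorem~\ref{main:P^n} applied to the general fiber to force the target to be $\bP^n$ with a hyperplane boundary, and a volume comparison (Lemma~\ref{lem:vol}) to rule out crepant discrepancy; Corollary~\ref{cor:ft3} then yields Fano type. None of those ingredients appear in your sketch, and without them the passage from ``multiplicity close to $n$'' to ``of Fano type'' does not go through.

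\emph{Case (2) and the ``klt at $x\Rightarrow$ Fano type'' bridge.} For surfaces the paper does not invoke any classification; it passes to the minimal resolution, takes the Zariski decomposition $-K_X=P+N$, observes $\epsilon(P)=\epsilon_m(-K_X)=2$, and feeds the pair $(X,N)$ into Lemma~\ref{lem:pair=n}. This is both simpler and more robust than your proposed discrepancy analysis plus Shokurov $1$-complements. More generally, the paper's argument in case~(1) never detours through Kawamata subadjunction or a global complement construction: once the non-klt locus is a single very general point $\{x\}$, one perturbs the boundary by shifting weight from $D$ to the effective part $E$ in the Kodaira-type decomposition $-K_X\sim_\bQ A+E$, directly producing a log Fano pair. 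Your proposed bridge via subadjunction and an auxiliary MMP is substantially more indirect and, as written, its soundness is unclear.

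\emph{The example.} Weighted projective spaces and ordinary projective cones over Fanos are klt, so they cannot by themselves furnish the required non-lc Fano with $\epsilon(-K_X)$ close to $n$. The paper's construction is more delicate: it blows up the non-reduced complete intersection $(x_0^2=f=0)$ in $\bP(1,a_1,\ldots,a_n)$ and then contracts the strict transform of the coordinate hyperplane $H$; the contraction creates a divisor of discrepancy $<-1$ over $X$, and the Seshadri estimate comes from $\epsilon(\pi^*L)$. This step is essential and is not captured by ``carefully tuned polarization.''
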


As a consequence, if $(X,D)$ is a pair such that $L=-(K_X+D)$ is nef and $\epsilon(L)>n$ where $n=\dim X$, then $X\cong\bP^n$ and a direct computation shows that $\deg D<1$. Similarly, the classification of Fano varieties $X$ with $\epsilon(-K_X)=n$ in \cite[Theorem 3]{lz} holds without any singularity assumptions.

Our next goal is to study varieties with $\epsilon_m(-K_X)=n$ in greater detail. Since moving Seshadri constant is preserved under small birational maps, we only aim to classify them up to isomorphism in codimension one. Because of Theorem \ref{main:fanotype}, it is natural to expect that all varieties $X$ with $\epsilon_m(-K_X)=n$ are of Fano type. If this is the case, by taking $Y=\Proj\oplus_{k=0}^\infty H^0(X,-kK_X)$, we obtain a birational contraction (see Definition \ref{def:bircont}) $f:X\dashrightarrow Y$ to a $\bQ$-Fano variety $Y$ such that $K_X+\Gamma=f^*K_Y$ for some effective $f$-exceptional divisor $\Gamma$. In this situation we clearly have $\epsilon_m(-K_X)=\epsilon_m(f^*(-K_Y)+\Gamma)=\epsilon_m(-K_Y)=\epsilon(-K_Y)$. In particular, $\epsilon_m(-K_X)=n$ is equivalent to $\epsilon(-K_Y)=n$ and given such $Y$ (as classified in \cite{lz}) it is not hard to list all corresponding $X$ up to small birational equivalence (see \S \ref{sec:birbdd}).

Unfortunately we don't know whether $\epsilon_m(-K_X)=n$ implies $X$ being of Fano type in general, although by Theorem \ref{main:fanotype} we are only left the boundary case $\vol(-K_X)=n^n$ (notice that $\epsilon_{m}(-K_X)=n$ implies $\vol(-K_X)\ge n^n$). Still, we have a similar yet weaker statement on the existence of birational contraction:

\begin{prop} \label{prop:contract}
Let $X$ be a projective variety of dimension $n$ with klt singularities such that $\epsilon_{m}(-K_X)=n$, then there exists a birational contraction $f:X\dashrightarrow Y$ to a $\bQ$-Fano variety $Y$ with $\epsilon(-K_Y)\ge n$.
\end{prop}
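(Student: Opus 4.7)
The plan is to construct $Y$ as the output of a Minimal Model Program starting from $X$. Since $\epsilon_m(-K_X)=n>0$ forces $-K_X$ to be big, $K_X$ is not pseudo-effective; after replacing $X$ by a small $\bQ$-factorialization (an isomorphism in codimension one, which preserves both $\epsilon_m(-K_X)$ and the property of being a target of a birational contraction), we may assume $X$ is $\bQ$-factorial klt. We then run a $K_X$-MMP with scaling of an ample divisor, which terminates in a Mori fiber space $g:X'\to Z$, giving a birational contraction $\phi:X\dashrightarrow X'$ to a klt variety $X'$.

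The next step is to check that $\epsilon_m(-K_{X'})\ge n$. For each divisorial contraction $\pi:X_i\to X_{i+1}$ we have $-K_{X_i}=\pi^*(-K_{X_{i+1}})+bE$ with $b>0$ and $E$ the exceptional divisor. The identity $\pi_*\cO_{X_i}(mbE)=\cO_{X_{i+1}}$ yields an isomorphism $H^0(X_i,-mK_{X_i})\cong H^0(X_{i+1},-mK_{X_{i+1}})$, and since $\pi$ is an isomorphism away from $E$, this identification preserves jet separation at any general point $x\notin \Supp E$. Hence $\epsilon_m(-K_{X_i},x)=\epsilon_m(-K_{X_{i+1}},\pi(x))$; flips are isomorphisms in codimension one, so one obtains $\epsilon_m(-K_{X'})=\epsilon_m(-K_X)=n$, and similarly $\vol(-K_{X'})=\vol(-K_X)$.

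The core task is to show $\dim Z=0$. Suppose $\dim Z>0$: a general fiber $F$ of $g$ is then a klt Fano of dimension $d<n$ (since $-K_{X'}$ is $g$-ample). The restriction map $H^0(X',-mK_{X'})\to H^0(F,-mK_F)$ factors through the jet quotient at $x$, giving $\epsilon_m(-K_{X'},x)\le \epsilon_m(-K_F,x)=\epsilon(-K_F,x)$ at a general $x\in F$. Applying Theorem \ref{thm:P^n-weak} to $F$, we have $\epsilon(-K_F)\le d$ unless $F\cong\bP^d$, in which case $\epsilon(-K_F)=d+1$. Combined with $\epsilon_m(-K_{X'},x)\ge n$, this forces $d=n-1$ and $F\cong\bP^{n-1}$, so $Z$ is a smooth curve.

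The hardest step is ruling out this final subcase. An intersection-theoretic computation on the $\bP^{n-1}$-fibration $X'\to Z$, using $F^2=0$ and $(-K_{X'}|_F)^{n-1}=n^{n-1}$, yields $(-K_{X'})^n=2n^n(1-g(Z))$, so bigness of $-K_{X'}$ forces $g(Z)=0$ (consistent with the rational connectedness of $X$ granted by Theorem \ref{thm:birbdd}) and $(-K_{X'})^n=2n^n$. If $\vol(-K_X)>n^n$, then Theorem \ref{main:fanotype}(1) applies directly: $X$ is of Fano type, and its anticanonical model is the desired $\bQ$-Fano $Y$. Otherwise $\vol(-K_X)=n^n$, which by preservation gives $\vol(-K_{X'})=n^n<2n^n=(-K_{X'})^n$, forcing $-K_{X'}$ to be non-nef; a refined analysis of its negative part on the $\bP^{n-1}$-fibration (identifying it with a union of negative sections of the relative tautological line bundle and producing a covering family of horizontal curves through a very general point of strictly too small $-K_{X'}$-Seshadri ratio) contradicts $\epsilon_m(-K_{X'})\ge n$. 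Hence $\dim Z=0$, $Y:=X'$ is the required $\bQ$-Fano, and $\epsilon(-K_Y)=\epsilon_m(-K_{X'})\ge n$.
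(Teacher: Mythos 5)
Your proof matches the paper's through the first four steps: small $\bQ$-factorialization, running the $K_X$-MMP to a Mori fiber space $g:X'\to Z$, non-decrease of $\epsilon_m(-K_{\bullet})$ under birational contractions, and — when $\dim Z>0$ — using Lemma~\ref{lem:restrict_m} and the characterization of $\bP^n$ to conclude that the general fiber is $\bP^{n-1}$ and $Z$ is a curve.

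The final step, however, is where the argument breaks down, and it breaks down for a structural reason. You try to \emph{rule out} the case $\dim Z>0$ by exhibiting a contradiction with $\epsilon_m(-K_{X'})\ge n$. The paper does not rule this case out; instead, Lemma~\ref{lem:mfs=n} uses Fujino's length-of-extremal-ray bound (Lemma~\ref{lem:raylength}) to prove that in precisely this fiber-type situation with general fiber $\bP^{n-1}$, the divisor $-K_{X'}$ is in fact nef and big. One then passes to the anticanonical model $Y=\mathrm{Proj}\,\oplus H^0(-mK_{X'})$, which is the desired $\bQ$-Fano since the contraction $X'\to Y$ is crepant and preserves the Seshadri constant. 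In other words, the ``contradiction'' you seek does not exist: the correct conclusion is that $-K_{X'}$ is nef, not that $Z$ is a point. Your derivation of ``$-K_{X'}$ non-nef'' rests on the formula $(-K_{X'})^n = 2n^n(1-g(Z))$, which you compute as if $X'\to Z$ were an honest projective bundle; a Mori fiber space over a curve whose \emph{general} fiber is $\bP^{n-1}$ need not be a $\bP^{n-1}$-bundle (there can be degenerate fibers and singularities of $X'$), so this formula is unjustified. Independently of that, the ``identification with negative sections of the relative tautological bundle producing a covering family of horizontal curves'' is not carried out at all and is exactly the step where an actual argument is needed.

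The takeaway: the missing ingredient is the nefness of $-K_{X'}$ established via the cone/ray-length machinery in Lemma~\ref{lem:mfs=n}. That lemma hinges on showing that for $m\gg 0$ there is $D\sim_\bQ -mK_{X'}$ with $(X',D)$ klt along the general fiber (which follows from the equality of restricted volume $\vol_{X'|F}(-K_{X'})=\vol(-K_F)=n^{n-1}$), and then noting that if $-K_{X'}$ were negative on the second extremal ray $R$ of $\overline{NE}(X')$, Fujino's bound $0 < -(K_{X'}+D)\cdot C\le 2n$ would be violated as $m\to\infty$. Once $-K_{X'}$ is nef and big, the base-point-free theorem gives the birational anticanonical model $Y$, completing the proof without any dichotomy on $\vol(-K_X)$ versus $n^n$.
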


In view of this proposition, another way to classify varieties with $\epsilon_m(-K_X)=n$ is to consider varieties that admit a birational contraction $f:X\dashrightarrow Y$ such that $\epsilon(-K_Y)\ge n$ for each possible $Y$. If $\vol(-K_Y)=n^n$ this is quite straightforward (see Remark \ref{rem:vol(Y)=n^n}) and in the surface case $X$ is already of Fano type by Theorem \ref{main:fanotype}. We next carry out the corresponding analysis when $Y\cong\bP^n$. %An interesting feature of the argument here is that it also provides a second proof of Theorem \ref{main:P^n}.
 
\begin{thm} \label{main:blowupPn}
Let $X$ be a variety of dimension $n\ge3$. Assume $\epsilon_{m}(-K_{X})=n$ and there exists a birational contraction
$f:X\dashrightarrow\mathbb{P}^{n}$. Then there exists a hyperplane $H\subseteq\bP^n$ such that $X$ is isomorphic in codimension one to a successive blowup of $\mathbb{P}^{n}$ along hypersurfaces in the strict transforms of $H$.
\end{thm}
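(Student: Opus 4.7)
The plan is to use a common resolution of $f$ to turn the moving-Seshadri constraint $\epsilon_{m}(-K_{X})=n$ into constraints on the centers of blow-ups that produce $X$ from $\bP^{n}$, and then read off the blow-up structure. Since moving Seshadri constants are preserved by small birational modifications and the conclusion is only up to codimension one, I first replace $X$ by a small $\bQ$-factorial modification, so we may assume $X$ is $\bQ$-factorial klt. Choose a log resolution $p : W \to X$ such that $g := f \circ p : W \to \bP^{n}$ is a morphism, and (after further blow-ups) expresses $g$ as a composition of smooth blow-ups $W = W_{k} \to \cdots \to W_{0} = \bP^{n}$ with smooth center $Z_{i-1} \subseteq W_{i-1}$ and exceptional divisor $F_{i}$. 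By the birational-contraction hypothesis, every $p$-exceptional prime divisor on $W$ is $g$-exceptional, so in the formula $p^{*}K_{X} = g^{*}K_{\bP^{n}} + F$ the divisor $F$ is supported on the $g$-exceptional locus.

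The key step is to establish the following claim: there exists a hyperplane $H \subseteq \bP^{n}$ such that for each index $i$ whose $F_{i}$ is not $p$-exceptional (i.e.\ corresponds to an $f$-exceptional divisor of $X$), the center $Z_{i-1}$ is a codimension one subvariety of the iterated strict transform $H_{i-1}$ of $H$ on $W_{i-1}$. I would argue this by induction on $i$ using test curves. For the first non-$p$-exceptional blow-up, a general line $\ell \subseteq \bP^{n}$ meeting $g(F_{i})$ transversally yields $(-p^{*}K_{X}) \cdot \tilde{\ell} = (n+1) - (c-1)$ where $c = \mathrm{codim}_{\bP^{n}} g(F_{i})$; the Seshadri bound $\epsilon_{m}(-K_{X}) \ge n$ then forces $c = 2$, and a secant-type dimension count (if $g(F_{i})$ spanned $\bP^{n}$, its chord variety would cover $\bP^{n}$, producing lines through a general $y$ meeting $g(F_{i})$ twice and violating the bound) forces $g(F_{i})$ to lie in a hyperplane $H$. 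For the inductive step, if $Z_{i-1}$ failed to lie in $H_{i-1}$, one obtains a line through a general point of $\bP^{n}$ whose strict transform on $W$ passes through images of several exceptional divisors in a configuration giving $(-p^{*}K_{X}) \cdot \tilde{\ell} < n$, contradicting the Seshadri hypothesis.

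Granted this claim, we can construct the required blow-up sequence directly. The non-$p$-exceptional $F_{i}$ correspond bijectively to the $f$-exceptional divisors of $X$, and (by the claim) each comes from the blow-up of a hypersurface in an iterated strict transform of $H$. Performing these blow-ups in an appropriate order yields a sequence $Y_{0} = \bP^{n} \leftarrow Y_{1} \leftarrow \cdots \leftarrow Y_{s}$ of blow-ups of hypersurfaces in iterated strict transforms of $H$, whose final term $Y_{s}$ is isomorphic in codimension one to $X$, as required.

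The main obstacle is the inductive step: pinning down that \emph{every} blow-up center lies in the iterated strict transform of the \emph{same} hyperplane $H$ (as opposed to separate hyperplanes at each level) requires a careful joint analysis of the intersection numbers of all $F_{i}$'s against strict transforms of test lines in $\bP^{n}$, together with control on the discrepancy coefficients of $F$. A secondary subtlety is that $-K_{X}$ need not be nef, so curve-based Seshadri bounds must be justified via the asymptotic jet-separation definition of $\epsilon_{m}$, or by restricting to the positive part of a Zariski decomposition of $-p^{*}K_{X}$ on $W$.
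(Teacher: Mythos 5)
Your outline captures part of the geometry (the codimension-two conclusion, the secant-variety argument forcing the centers into a hyperplane) but the central technical mechanism of the paper's proof is missing, and you explicitly acknowledge that the inductive step is a gap rather than filling it.

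The paper's proof does not work with test curves on a resolution; it works on $\bP^n$ directly with the ideal sheaves $\mathcal{I}_{k} = \{g : \nu_E(g) \ge k\, a(E;\bP^n,0)\}$. The image of $H^0(X,-kK_X)$ in $H^0(\bP^n, -kK_{\bP^n})$ lies in $H^0(\mathcal{I}_k(-kK_{\bP^n}))$, which converts $\epsilon_m(-K_X) = n$ into the statement that $\mathcal{I}_k \not\subseteq \mathfrak{m}_x^{\lambda k}$ for any $\lambda > 1$ and $k \gg 0$, at any point $x$ of the center. The heavy lifting is then done by the refined Izumi-type inequality $\nu \le a(\nu)\,\mathrm{mult}_x$ (Lemma \ref{lem:inequality}) and its equality analysis: Lemma \ref{lem:izumi} shows that if the center has codimension $\ge 3$ one automatically gets $\mathcal{I}_k \subseteq \mathfrak{m}_x^{(1+1/m)k}$, forcing codimension exactly $2$, and Lemma \ref{lem:equality} (via a delicate two-variable Puiseux-series argument over a possibly non-closed field in Lemma \ref{lem:surface}) shows that the valuation must be a monomial valuation, i.e.\ the exceptional divisor arises from a specific chain of blowups each of which maps birationally onto $Z$. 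Your proposal has nothing playing the role of this equality characterization, which is exactly what converts ``codimension two'' into ``blowup of a hypersurface in a hyperplane.''

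Your test-curve computation $(-p^*K_X)\cdot\tilde\ell = (n+1)-(c-1)$ is fine for the very first center, but extracting an upper bound on $\epsilon_m(-K_X)$ from a single curve intersection is not automatic when $-K_X$ (equivalently $-p^*K_X$ on $W$) is not nef, and the paper sidesteps this entirely by using jet separation on $\bP^n$ where $-K_{\bP^n}$ \emph{is} nef (Lemma \ref{lem:sbound} is applied on $\bP^n$ or its one-point blowup, never on the full chain). You name Zariski decomposition as a possible fix but do not carry it out, and as the chain of blowups grows this would need to be controlled at every stage. Finally, even granting the per-divisor structure, the last paragraph of the paper's proof is a nontrivial case analysis (Cases 1 and 2) showing that all exceptional divisors must be adapted to the \emph{same} hyperplane; it uses a colength estimate on $\mathcal{I}_{D_1}^k \cap \mathcal{I}_{D_i}^k$ via a regular-sequence argument, and your sketch does not address this step at all. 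So while the strategic picture you describe is consistent with the theorem, the proposal as written has a genuine gap at its core.
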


In particular, the conclusion of Theorem \ref{main:fanotype} (i.e. $X$ is of Fano type) also holds in this case. However, the remaining case when $Y$ is a weighted hypersurface of degree $d+1$ in $\bP(1^{n+1},d)$ seems more complicated and it is not clear to us how to proceed.

\subsection*{Outline}

This paper is organized as follows. In \S \ref{sec:prelim} we collect some preliminary results that will be used later. In \S \ref{sec:moving} we prove some basic properties of moving Seshadri constant. The proof of Theorems \ref{thm:volbdd} and \ref{main:P^n}, based on the connectedness lemma of Koll\'ar and Shokurov and a trick for constructing isolated non-klt center, is presented in \S \ref{sec:volbdd}. \S \ref{sec:birbdd} is devoted to the proof of Theorems \ref{thm:birbdd}, \ref{main:n-1}, \ref{main:fanotype} and Proposition \ref{prop:contract}. The idea for proving Theorem \ref{thm:birbdd} is to first replace the variety by a terminal modification and then run the MMP. If it ends with a terminal Fano variety then we are done since the moving Seshadri constant does not decrease during MMP. If instead the MMP terminates with a fibertype Mori fiber space then using the assumption on Seshadri constant we prove that the fiber is a projective space while the base is a rational curve. In particular, the original variety is rational. Proposition \ref{prop:contract} is proved in a similar fashion and the proofs of Theorems \ref{main:n-1} and \ref{main:fanotype} are obtained by combining ideas from previous parts. In \S \ref{sec:izumi} we give a second proof of Theorem \ref{main:P^n} by analyzing moving Seshadri constants on varieties that admit a birational contraction to $\bP^n$. The key ingredient here is a refined version of Izumi-type inequality for divisorial valuations whose center contains a smooth point. We then proceed to find the universal optimal constant in the corresponding inequality and classify those valuations for which such constant cannot be improved. These results may be of indepedent interest and lead to the proof of Theorem \ref{main:blowupPn} in \S \ref{sec:blowup}. Finally we illustrate some examples and propose a few interesting further questions in \S \ref{sec:example}.

\subsection*{Acknowledgement}

The author would like to thank his advisor J\'anos Koll\'ar for constant support, encouragement and numerous inspiring conversations. He also wishes to thank Yuchen Liu for fruitful discussions that especially lead to the formulation of Theorem \ref{thm:volbdd} and Lue Pan for his help with the proof of Lemma \ref{lem:surface}. Finally he is grateful to the anonymous referee(s) for careful reading of the manuscript and for the numerous comments that help improve the exposition of the article.

\section{Preliminaries} \label{sec:prelim}

\subsection{Notation and conventions}

Unless otherwise specified, all varieties are assumed to be projective and normal.

A \emph{pair} $(X,D)$ consists of a variety $X$ and an effective $\bQ$-divisor $D$ on $X$ such that $K_X+D$ is $\bQ$-Cartier. If $E$ is a prime divisor over $X$, the \emph{discrepancy} of $E$ with respect to $(X,D)$ is denoted by $a(E;X,D)$. A subvariety $Z\subseteq X$ is called a \emph{non-klt center} (resp. \emph{non-lc center}) of $(X,D)$ if it is the center of a divisor $E$ over $X$ with $a(E;X,D)\le -1$ (resp. $a(E;X,D)< -1$). Similarly if the pair $(X,D)$ is canonical (see \cite[Definition 2.34]{km98} for related definitions) then $V\subseteq X$ is called a \emph{center of canonical singularity} if it's the center of a divisor $E$ with $a(E;X,D)=0$. The \emph{non-klt $($resp. non-lc$)$ locus} Nklt$(X,D)$ (resp. Nlc$(X,D)$) is the union of all non-klt (resp. non-lc) centers of $(X,D)$.

A dominant morphism $f:X\rightarrow Y$ is called a \emph{fibertype} morphism if it has connected fibers and $0<\dim Y<\dim X$.

\subsection{Minimal model program}
We will only use the minimal model program (MMP) for varieties whose canonical divisor is not pseudo-effective. In such case the existence and termination of the MMP has been established by Birkar-Cascini-Hacon-McKernan.

\begin{defn}
Let $(X,D)$ be a klt pair and $f:X\rightarrow Y$ a projective morphism with connected fibers. Then $f$ is called a \emph{Mori fiber space} if
\begin{enumerate}
    \item $X$ is $\bQ$-factorial;
    \item the relative Picard number $\rho(X/Y)=1$;
    \item $-(K_X+D)$ is $f$-ample.
\end{enumerate}
\end{defn}

\begin{thm} \cite[Corollary 1.3.3]{bchm} \label{thm:mmp}
Let $(X,D)$ be a $\bQ$-factorial klt pair. Suppose that $K_X+D$ is not pseudo-effective, then we may run a $(K_X+D)$-MMP $f:X\dashrightarrow Y$ and end with a Mori fiber space $g:Y\rightarrow Z$. 
\end{thm}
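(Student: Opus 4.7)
The plan is to reduce the statement to the existence and termination results of Birkar--Cascini--Hacon--McKernan via an MMP with scaling. First I would choose a sufficiently general ample $\bQ$-divisor $A$ so that $(X, D+A)$ remains klt and $K_X+D+A$ is ample, and introduce the pseudo-effective threshold
\[
\lambda_0 = \inf\{\lambda \ge 0 : K_X+D+\lambda A \text{ is pseudo-effective}\}.
\]
The hypothesis gives $\lambda_0 > 0$, while for every $\lambda > \lambda_0$ the divisor $K_X+D+\lambda A$ is big. The key observation is that the $(K_X+D)$-MMP with scaling of $A$ is, at each intermediate stage $X_i$, simultaneously an MMP step for the klt pair $(X_i, D_i+\lambda_i A_i)$ with scales $\lambda_i \searrow \lambda_0 > 0$; so the entire sequence can be regarded as an MMP on a klt pair with big boundary.

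Next I would invoke the BCHM package: for a $\bQ$-factorial klt pair with big boundary, flips exist and any such MMP with scaling terminates. These twin statements, together with finite generation of the log canonical ring and non-vanishing, are established in BCHM by a simultaneous induction on dimension through four interlocking theorems, and this is exactly the \emph{main obstacle}. I would not attempt to reprove any of it; instead I would take as given that the process produces a birational map $f : X \dashrightarrow Y$ and a final extremal contraction $g: Y \to Z$ of a $(K_Y+f_*D)$-negative extremal ray.

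Finally I would argue that $g$ must be of fibertype rather than divisorial (and that the MMP cannot halt at a minimal model). The point is that non-pseudo-effectiveness descends along each MMP step: if $\phi:X_i\to X_{i+1}$ is a divisorial contraction with exceptional divisor $E$, then $K_{X_i}+D_i=\phi^*(K_{X_{i+1}}+D_{i+1})+aE$ with $a>0$, so pseudo-effectiveness of $K_{X_{i+1}}+D_{i+1}$ would force $K_{X_i}+D_i$ to be pseudo-effective as well; an analogous argument using the flipping and flipped contractions handles flips. Thus $K_Y+f_*D$ is still not pseudo-effective, hence cannot be nef, so the last step cannot be a divisorial or small contraction terminating with a minimal model. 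The defining properties of an extremal $(K_Y+f_*D)$-negative fibertype contraction on a $\bQ$-factorial variety then yield $\rho(Y/Z)=1$ and $-(K_Y+f_*D)$ being $g$-ample, which is precisely the definition of a Mori fiber space.
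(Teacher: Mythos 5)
The paper cites this statement directly from \cite{bchm} (Corollary 1.3.3) and supplies no proof of its own, so your write-up is a fill-in to be measured against BCHM's actual argument. Your outline is close, but there is a genuine gap in the termination step. BCHM's termination with scaling (Theorem E, equivalently Corollary 1.3.2) is proved under the hypothesis that the log canonical divisor is \emph{pseudo-effective}, which is precisely what fails here. Your observation that, while $\lambda_i>\lambda_0$, the steps of the $(K_X+D)$-MMP with scaling of $A$ can be re-read as steps of the $(K_X+D+\lambda_0 A)$-MMP with scaling of $(1-\lambda_0)A$ --- a klt pair with big boundary and pseudo-effective log canonical divisor --- is correct and shows that this initial segment terminates. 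But once the scale reaches $\lambda_0$, the rays being contracted are $(K+D+\lambda_0 A)$-\emph{trivial}, hence are not steps of the $(K+D+\lambda_0 A)$-MMP, and termination of that MMP gives no control over them; moreover the pseudo-effective threshold is preserved under each MMP step, so the scale can never drop strictly below $\lambda_0$, and you cannot conclude by driving it to zero either. Your sketch simply ``takes termination as given'' at exactly the point where the cited theorem stops applying.

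To close the gap one needs a second stage, as in BCHM's own proof: run the $(K_X+D+\lambda_0 A)$-MMP with scaling to a minimal model $Y$ of $(X,D+\lambda_0 A)$; note that $K_Y+\Gamma+\lambda_0 B$ (with $\Gamma=f_*D$, $B=f_*A$) is nef, klt with big boundary, and not big (it lies on the boundary of the pseudo-effective cone), hence semiample by the basepoint-free theorem, inducing a fibration $\psi:Y\to W$ with $\dim W<\dim Y$; then run the $(K_Y+\Gamma)$-MMP \emph{relative to $W$}. Over $W$ one has $-(K_Y+\Gamma)\sim_{\bR,W}\lambda_0 B$, which is relatively big and nef, so the relative MMP cannot end at a relative minimal model and must produce a Mori fiber space over $W$, hence a Mori fiber space. (One could instead terminate the residual $(K+D+\lambda_0 A)$-crepant contractions using BCHM's finiteness of weak log canonical models together with the strict increase of the $(K+D)$-discrepancies, but either way a further argument beyond a single invocation of termination with scaling is needed.) Your closing observation that non-pseudo-effectivity of $K+D$ descends along the MMP, ruling out a minimal model at the end, is correct and is part of BCHM's reasoning; but it only takes effect after termination of the full sequence has actually been established.
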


In general $X$ is not klt or even $\bQ$-Gorenstein, so we will instead run the MMP on the various modifications of $X$.

\begin{defn}
A projective birational morphism $\phi:Y\rightarrow X$ is called a \emph{terminal modification} of $X$ if $Y$ is $\bQ$-factorial, terminal and $K_Y$ is $\phi$-nef.
\end{defn}

\begin{defn}
A projective birational morphism $\phi:Y\rightarrow X$ is called a \emph{small $\bQ$-factorial modification} of $X$ if $Y$ is $\bQ$-factorial and $\phi$ is small (i.e. there is no $\phi$-exceptional divisor).
\end{defn}

For simplicity, We will just call $Y$ a terminal (resp. small $\bQ$-factorial) modification of $X$. The existence of terminal modification of a variety follows from \cite{bchm} while by \cite[Corollary 1.37]{mmp}, small $\bQ$-factorial modification of $X$ exists if there exists a divisor $\Delta$ such that $(X,\Delta)$ is dlt. We will use the following property of terminal  modification.

\begin{lem} \label{lem:discrep}
Let $(X,D)$ be a pair and $\phi:Y\rightarrow X$ a terminal modification of $X$. Then $a(E;X,D)\le 0$ for all $\phi$-exceptional divisor $E$.
\end{lem}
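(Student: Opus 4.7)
The plan is to apply the negativity lemma to the relative canonical divisor of $\phi$. A first reduction handles the boundary $D$: for any prime divisor $E$ over $X$, the standard identity $a(E;X,D)=a(E;X,0)-\mathrm{ord}_E(D)$ combined with $D\ge 0$ yields $a(E;X,D)\le a(E;X,0)$. Hence it suffices to prove $a(E_i;X,0)\le 0$ for each $\phi$-exceptional prime divisor $E_i$ on $Y$.

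For the case $D=0$, choose $K_Y$ compatible with $K_X$ (so that $\phi_*K_Y=K_X$) and set
\[
B \;:=\; \phi^*K_X-K_Y \;=\; -\sum_i a(E_i;X,0)\,E_i,
\]
where the sum runs over all $\phi$-exceptional primes. Then $B$ is $\phi$-exceptional and $\bQ$-Cartier (using $\bQ$-factoriality of $Y$), and its negative satisfies $-B=K_Y-\phi^*K_X\equiv_\phi K_Y$, which is $\phi$-nef by the definition of a terminal modification. The negativity lemma (e.g.\ \cite[Lemma~3.39]{km98}) then gives $B\ge 0$, i.e.\ $a(E_i;X,0)\le 0$ for every $i$. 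Combined with the reduction, $a(E;X,D)\le 0$ for every $\phi$-exceptional $E$, as desired.

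The main ingredients (the comparison of discrepancies and the negativity lemma) are both standard, so I do not anticipate any substantial obstacle; the argument is essentially a direct two-step application.
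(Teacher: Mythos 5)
Your reduction to the case $D=0$ has a real gap. The lemma only assumes that $(X,D)$ is a pair, i.e.\ that $K_X+D$ is $\bQ$-Cartier; it does \emph{not} assume $K_X$ itself is $\bQ$-Cartier. (This is not a pedantic distinction here: the paper explicitly considers non-$\bQ$-Gorenstein $X$, and runs this lemma through terminal modifications precisely to get around that.) Consequently the quantity $a(E;X,0)$ and the pullback $\phi^*K_X$ that your argument relies on are in general undefined, and the identity $a(E;X,D)=a(E;X,0)-\mathrm{ord}_E(D)$ has no meaning. Your second step, applying the negativity lemma to $B=\phi^*K_X-K_Y$, is correct only under the extra hypothesis that $K_X$ is $\bQ$-Cartier, which you cannot assume.

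The fix is to skip the splitting and apply the negativity lemma in one shot, which is what the paper does: write $K_Y+\Delta\sim_\bQ\phi^*(K_X+D)$ with $\phi_*\Delta=D$ (this only uses that $K_X+D$ is $\bQ$-Cartier). Then $-\Delta\equiv_\phi K_Y$ is $\phi$-nef by the definition of terminal modification, and $\phi_*\Delta=D\ge0$, so the negativity lemma gives $\Delta\ge0$. Since the coefficient of each $\phi$-exceptional $E$ in $\Delta$ is $-a(E;X,D)$, the conclusion follows. You already have the right ingredient (negativity lemma against $\phi$-nefness of $K_Y$); the point is to apply it to $\Delta$ rather than to the relative canonical divisor.
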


\begin{proof}
We may write $K_Y+\Delta\sim_\bQ \phi^*(K_X+D)$ for some divisor $\Delta$ such that $\phi_*\Delta=D$. Since $-\Delta\sim_{\phi.\bQ}K_Y$ is $\phi$-nef and $D$ is effective, $\Delta$ is also effective by the negativity lemma \cite[Lemma 3.39]{km98}. As $-a(E;X,D)$ is the coefficient of $E$ in $\Delta$, the lemma follows.
\end{proof}

\subsection{Varieties of Fano type}
\begin{defn}
A pair $(X,D)$ is called \emph{log Fano} if it is klt and $-(K_X+D)$ is ample. We say a variety $X$ is \emph{of Fano type} if there exists a divisor $D$ such that $(X,D)$ is log Fano.
\end{defn}

$\bQ$-factorial varieties of Fano type are Mori dream spaces by \cite[Corollary 1.3.2]{bchm}. In particular we can run the $D$-MMP for any divisor $D$.

\begin{defn} \label{def:bircont}
A birational map $f:X\dashrightarrow Y$ is called a \emph{birational contraction} if for a common resolution $p:W\rightarrow X$, $q:W\rightarrow Y$, every $p$-exceptional divisor is also $q$-exceptional.
\end{defn}

\begin{lem} \cite[Lemma 2.4]{bir16} \label{lem:ft1}
Let $f:X\dashrightarrow Y$ be a birational contraction. If $X$ is of Fano type, then $Y$ is also of Fano type. In particular, being of Fano type is preserved by running MMP.
\end{lem}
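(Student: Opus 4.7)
My plan is to construct, from the log Fano structure on $X$, an effective klt boundary $D_Y$ on $Y$ with $-(K_Y+D_Y)$ ample, passing through a log Calabi--Yau intermediate. The argument has three steps.

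First I will perturb on $X$: picking $(X,D)$ klt with $-(K_X+D)$ ample by hypothesis, I choose a general effective $H\sim_\bQ -(K_X+D)$ so that $(X,B)$ remains klt for $B:=D+H$. Then $K_X+B\sim_\bQ 0$ and $B$ is big (it contains the ``ample part'' $H$).

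Next I will transport this structure to $Y$ via $B_Y:=f_*B$. Take a common resolution $p:W\to X$, $q:W\to Y$ and set $\Theta:=p^*(K_X+B)-q^*(K_Y+B_Y)$ on $W$. The key assertion is $\Theta=0$. Writing $K_W+B_W^p=p^*(K_X+B)$ and $K_W+B_W^q=q^*(K_Y+B_Y)$ and comparing coefficients of $B_W^p-B_W^q=\Theta$, every prime divisor $F$ with non-zero multiplicity in $\Theta$ must be either $p$-exceptional or $q$-exceptional; the case ``$p$-exceptional but not $q$-exceptional'' is ruled out precisely by the hypothesis that $f$ is a birational contraction, so $\Theta$ is $q$-exceptional. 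Moreover $\Theta\sim_\bQ 0$ (since $K_X+B\sim_\bQ 0$), hence $q$-numerically trivial, and applying the negativity lemma to both $\Theta$ and $-\Theta$ forces $\Theta=0$. This gives simultaneously $K_Y+B_Y\sim_\bQ 0$ and $a(F;Y,B_Y)=a(F;X,B)>-1$ for every divisor $F$ over $Y$; since the coefficients of $B_Y$ also lie in $[0,1)$, the pair $(Y,B_Y)$ is klt.

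Finally I re-perturb on $Y$ to recover ampleness. Bigness of $-K_X$ is preserved by birational pushforward, so $-K_Y\sim_\bQ B_Y$ is big. Kodaira's lemma writes $B_Y\sim_\bQ A+E$ with $A$ ample $\bQ$-Cartier and $E\ge 0$; for sufficiently small $\epsilon>0$ I take $D_Y:=(1-\epsilon)B_Y+\epsilon E$. Then $(Y,D_Y)$ is klt and
\[
-(K_Y+D_Y)\sim_\bQ \epsilon(B_Y-E)\sim_\bQ \epsilon A
\]
is ample, exhibiting $Y$ as of Fano type. The last sentence of the lemma follows because each step of the MMP (divisorial contraction or flip) is itself a birational contraction. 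The principal obstacle is the middle step: verifying that $\Theta$ is $q$-exceptional requires careful divisor-by-divisor bookkeeping that hinges on the definition of a birational contraction, and the negativity lemma must then be applied in both directions to pin $\Theta$ down to zero. The outer perturbation steps are routine applications of Kodaira's lemma and the openness of klt singularities.
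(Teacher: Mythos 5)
Your argument is correct and reproduces, in essence, the proof of the cited \cite[Lemma 2.4]{bir16}; the paper itself gives no proof of this lemma (it relies on the citation), and its proof of the companion Lemma~\ref{lem:ft2} uses the same perturb--transport--reperturb template in the opposite direction. One expository issue worth fixing in the middle step: before writing $q^*(K_Y+B_Y)$ you must know $K_Y+B_Y$ is $\bQ$-Cartier, and as written this is assumed rather than established. The clean way to resolve it is to note first that $q_*p^*(K_X+B)=K_Y+B_Y$ (here one uses that $f$ is a birational contraction, so $p$-exceptional divisors are $q$-exceptional and $q_*$ of the log pullback's boundary part equals $f_*B$), whence $K_Y+B_Y\sim_\bQ 0$ and in particular is $\bQ$-Cartier; only then is $q^*(K_Y+B_Y)$ well-defined, and your claim ``$\Theta\sim_\bQ 0$'' becomes a genuine deduction rather than a circular one. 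With that reordering, the bookkeeping on $\Theta$ and the two applications of the negativity lemma go through exactly as you describe, and the final Kodaira perturbation on $Y$ is fine (note that $B_Y-E\sim_\bQ A$ is automatically $\bQ$-Cartier, so $K_Y+D_Y$ is $\bQ$-Cartier even though $K_Y$ alone need not be).
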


\begin{lem} \label{lem:ft2}
Let $(Y,\Delta)$ be a pair and $f:X\dashrightarrow Y$ a birational contraction such that $a(E;Y,\Delta)\le0$ for every $f$-exceptional divisor $E$. Assume that there exists a $\bQ$-divisor $\Delta'\ge0$ such that $(Y,\Delta+\Delta')$ is log Fano, then $X$ is of Fano type.
\end{lem}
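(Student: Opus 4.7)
The plan is to construct an effective $\bQ$-divisor $D_X$ on $X$ such that $(X,D_X)$ is klt and $-(K_X+D_X)$ is big; by the standard characterization of Fano type (a klt pair with big anti-log-canonical divisor is enough), this will prove $X$ is of Fano type.

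First I would take a common log resolution $\pi\colon W\to X$, $\sigma\colon W\to Y$, define $B_W$ on $W$ by $K_W+B_W=\sigma^*(K_Y+\Delta+\Delta')$, and set $D_X:=\pi_*B_W$. Since $(Y,\Delta+\Delta')$ is klt, every coefficient of $B_W$ is strictly less than $1$. Effectivity of $D_X$ is then checked componentwise: a non-$f$-exceptional prime divisor on $X$ corresponds under $f$ to a prime divisor on $Y$ and inherits its nonnegative coefficient in $\Delta+\Delta'$; for an $f$-exceptional prime divisor $F$ on $X$, its strict transform on $W$ is $\sigma$-exceptional (because $f$ is a birational contraction), so the coefficient in question equals $-a(F;Y,\Delta+\Delta')\ge -a(F;Y,\Delta)\ge 0$ by the hypothesis $a(F;Y,\Delta)\le 0$ together with $\Delta'\ge 0$.

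Writing $\pi^*(K_X+D_X)=K_W+B_W^X$, the $\bQ$-divisor $B_W^X-B_W$ has zero push-forward to $X$ and is therefore $\pi$-exceptional; over $X$ it is $\bQ$-linearly equivalent to $-\sigma^*(K_Y+\Delta+\Delta')=\sigma^*H$, where $H:=-(K_Y+\Delta+\Delta')$ is ample. Since $\sigma^*H$ intersects every $\pi$-contracted curve nonnegatively, $B_W^X-B_W$ is $\pi$-nef, so the negativity lemma \cite[Lemma 3.39]{km98} forces $B_W^X\le B_W$. All coefficients of $B_W^X$ are therefore strictly less than $1$, so after arranging $W$ to also log resolve $(X,D_X)$, the pair $(X,D_X)$ is klt. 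Finally, the identity $\pi^*(-(K_X+D_X))=\sigma^*H+(B_W-B_W^X)$ exhibits the pull-back as a nef and big divisor plus an effective $\pi$-exceptional term, hence big, and bigness descends to $-(K_X+D_X)$ under the birational morphism $\pi$. The main bookkeeping obstacle is tracking the two boundaries $B_W$ and $B_W^X$ on $W$ and using the relative ample class $\sigma^*H$ to compare them via negativity; once $B_W^X\le B_W$ is in hand, both the klt property and the bigness of $-(K_X+D_X)$ follow simultaneously.
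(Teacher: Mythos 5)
The construction of $D_X$, its effectivity, and the klt property of $(X,D_X)$ via the negativity lemma are all correct; likewise the identity $\pi^*\bigl(-(K_X+D_X)\bigr)=\sigma^*H+(B_W-B_W^X)$, which shows that $-(K_X+D_X)$ is big. The gap is in the final sentence: the assertion that a klt pair $(X,D_X)$ with $-(K_X+D_X)$ \emph{big} already forces $X$ to be of Fano type is false. The standard characterizations (see e.g.\ Prokhorov--Shokurov) require $-(K_X+D_X)$ to be \emph{nef and} big, or equivalently $K_X+D_X\sim_{\bQ}0$ with $D_X$ big and klt; mere bigness is not enough, because $-(K_X+D_X)=\pi_*(\sigma^*H)$ is only the push-forward of a nef divisor and need not be nef on $X$. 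A counterexample is already in \S\ref{sec:example} of this paper: for $X=\bP_C(\cO_C\oplus\cO_C(-D))$ with $g(C)\ge 2$ and $\deg D\gg 0$, the surface $X$ is smooth (so $(X,0)$ is klt) and $-K_X$ is big, yet $X$ fibers onto a curve of positive genus and hence is not rationally connected, so it is certainly not of Fano type.

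To repair the argument within your framework you should do what the paper's proof does: first absorb $\Delta'$ into $\Delta$, pick a \emph{general} $D\sim_{\bQ}H=-(K_Y+\Delta)$ so that $(Y,\Delta+D)$ is still klt (and so that the strict transform computations are clean), and then take the crepant pull-back
\[
K_X+\Delta_1+D_1+\Gamma=f^*(K_Y+\Delta+D)\sim_{\bQ}0 ,
\]
where $\Delta_1,D_1$ are strict transforms and $\Gamma\ge 0$ is $f$-exceptional; this is exactly your $(X,D_X)$ computation carried out with the auxiliary $D$ added, and it produces a klt log Calabi--Yau pair on $X$ whose boundary contains the \emph{big} divisor $D_1$. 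The decisive extra step is the perturbation: writing $D_1\sim_{\bQ}A+E$ with $A$ ample and $E\ge 0$, the pair $(X,\Delta_1+(1-\epsilon)D_1+\epsilon E+\Gamma)$ is klt for $0<\epsilon\ll 1$ and has anti-log-canonical $\sim_{\bQ}\epsilon A$ \emph{ample}, which is what actually gives Fano type. Your proposal stops one step short of this, exactly where nefness (rather than just bigness) is needed.
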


\begin{proof}
The proof is similar to that of \cite[Lemma 2.4]{bir16}. Replace $\Delta$ by $\Delta+\Delta'$, we may assume that $\Delta'=0$. Since $H=-(K_Y+\Delta)$ is ample, we can choose a general $D\sim_\bQ H$ such that $(Y,\Delta+D)$ is still klt. Write
\[
K_X+\Delta_1+D_1+\Gamma=f^*(K_Y+\Delta+D)\sim_\bQ 0
\]
where $\Delta_1,D_1$ are strict transforms of $\Delta,D$ and $\Gamma$ is $f$-exceptional. Then $(X,\Delta_1+D_1+\Gamma)$ is also klt and by assumption $\Gamma\ge0$. As $D_1$ is big, we may write $D_1=A+E$ where $A$ is ample and $E$ is effective, then for $0<\epsilon\ll 1$ the pair $(X,\Delta_1+(1-\epsilon)D_1+\epsilon E+\Gamma)$ is log Fano, proving the lemma.
\end{proof}

\begin{cor} \label{cor:ft3}
Let $(Y,\Delta)$ be a plt pair and $f:X\dashrightarrow Y$ a birational contraction such that $a(E;Y,\Delta)\le0$ for every $f$-exceptional divisor $E$. Assume that $\lfloor \Delta \rfloor$ is $\bQ$-Cartier and $-(K_Y+\Delta)$ is ample, then $X$ is of Fano type.
\end{cor}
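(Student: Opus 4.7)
The plan is to adapt the proof of Lemma~\ref{lem:ft2} to the plt setting by introducing one extra perturbation, tailored to the integral part of the boundary. The naive approach---replacing $\Delta$ by $\Delta-\epsilon\lfloor\Delta\rfloor$ on $Y$ to convert plt into klt and then applying Lemma~\ref{lem:ft2} directly---fails, because decreasing $S:=\lfloor\Delta\rfloor$ on $Y$ can raise the discrepancy of an $f$-exceptional divisor strictly above $0$, violating the sign hypothesis. (For instance, if $Y=\bP^2$, $\Delta=L$ is a line, and $X$ is the blow-up of a smooth point $p\in L$, then the exceptional divisor $E$ satisfies $a(E;\bP^2,L)=0$ while $\mathrm{ord}_E(L)=1$.)

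I will instead perform the analogous perturbation on $X$. First I reduce to the case where $X$ is $\bQ$-factorial: the log pullback of $(Y,\Delta)$ endows $X$ with a plt pair structure $(X,f^{-1}_*\Delta+\Gamma)$, where $\Gamma=-\sum_{E\,f\text{-exc}}a(E;Y,\Delta)\,E\ge 0$, so a small $\bQ$-factorial modification of $X$ exists, and Fano type is preserved by Lemma~\ref{lem:ft1}. Following Lemma~\ref{lem:ft2}, I then pick a general $D\sim_\bQ H:=-(K_Y+\Delta)$ so that $(Y,\Delta+D)$ remains plt and $D$ avoids the finitely many centers of $f$-exceptional divisors. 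This yields
\[
K_X+\Delta_X+D_X+\Gamma=f^*(K_Y+\Delta+D)\sim_\bQ 0,
\]
with $(X,\Delta_X+D_X+\Gamma)$ plt and $\lfloor\,\cdot\,\rfloor=S_X:=f^{-1}_*S$.

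Since $D_X$ is big, I decompose $D_X\sim_\bQ A+E_0$ with $A$ ample and $E_0\ge 0$ effective; by choosing $A=\delta A'$ with $A'$ very ample avoiding $S_X$ and $\delta$ small, I further arrange $\mult_{S_X}(E_0)=0$. For $0<\epsilon,\epsilon'\ll 1$ with $\epsilon'/\epsilon$ sufficiently small, set
\[
\Theta:=\Delta_X+(1-\epsilon)D_X+\epsilon E_0+\Gamma-\epsilon'S_X=(\Delta_X+D_X+\Gamma)-\epsilon A-\epsilon'S_X.
\]
Then $-(K_X+\Theta)\sim_\bQ\epsilon A+\epsilon'S_X$, which is ample by openness of the ample cone (using $\bQ$-Cartierness of $S_X$ on the now $\bQ$-factorial $X$). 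The coefficient of $S_X$ in $\Theta$ drops to $1-\epsilon'<1$; the other boundary coefficients remain $<1$ for $\epsilon,\epsilon'$ small; and since $\Theta\le\Delta_X+D_X+\Gamma$, all discrepancies of divisors over $X$ stay $>-1$. Hence $(X,\Theta)$ is klt log Fano, so $X$ is of Fano type.

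The main obstacle is the coordination of the two simultaneous perturbations: the Kodaira-type decomposition $D_X=A+E_0$ (already present in Lemma~\ref{lem:ft2}) and the new correction $-\epsilon'S_X$ (which upgrades plt to klt). The hypothesis that $\lfloor\Delta\rfloor$ is $\bQ$-Cartier, combined with the reduction to $\bQ$-factorial $X$, is precisely what makes $S_X$ a $\bQ$-Cartier divisor on $X$, ensuring that $\epsilon A+\epsilon'S_X$ is a genuine $\bQ$-Cartier ample class rather than merely a numerically positive Weil $\bQ$-divisor.
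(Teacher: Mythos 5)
Your route is genuinely different from the paper's: you pull back to $X$ first and then perturb there, whereas the paper perturbs on $Y$ \emph{before} invoking Lemma~\ref{lem:ft2}. You correctly observe that the naive perturbation $\Delta\mapsto\Delta-\epsilon\lfloor\Delta\rfloor$ on $Y$ can push the discrepancy of an $f$-exceptional divisor above $0$. But that is exactly the obstacle the paper does resolve on the $Y$-side: one chooses a \emph{general very ample} $H\subseteq Y$ containing the closures of the centers of \emph{all} $f$-exceptional divisors and sets $\Delta'=\Delta-\epsilon\lfloor\Delta\rfloor+\delta H$ with $0<\epsilon\ll\delta\ll1$. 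Since each $f$-exceptional $E$ has center inside $H$, the term $-\delta\,\mathrm{ord}_E(H)$ strictly decreases $a(E;Y,\cdot)$, and there are only finitely many such $E$, so for $\epsilon\ll\delta$ the sign condition $a(E;Y,\Delta')\le0$ is preserved. The pair $(Y,\Delta')$ is klt, $-(K_Y+\Delta')$ is still ample (using $\bQ$-Cartierness of $\lfloor\Delta\rfloor$), and Lemma~\ref{lem:ft2} applies verbatim. This two-parameter perturbation on $Y$ is the idea you are missing, and it avoids all of the difficulties your $X$-side argument runs into.

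Your own argument has a genuine gap at the decomposition step. You need $E_0\sim_\bQ D_X-\delta A'$ effective with $\mathrm{mult}_{S_X}(E_0)=0$; otherwise the coefficient of $S_X$ in $\Theta$ is $1+\epsilon\,\mathrm{mult}_{S_X}(E_0)-\epsilon'\ge1$ whenever $\epsilon'/\epsilon$ is small, and $(X,\Theta)$ is not klt. The assertion that ``choosing $A=\delta A'$ with $A'$ very ample avoiding $S_X$ and $\delta$ small'' forces $\mathrm{mult}_{S_X}(E_0)=0$ is not justified. The issue is that $E_0$ lives in the $\bQ$-linear system of $D_X-\delta A'$, not of $D_X$; while $S_X$ is not a fixed component of $|mD_X|$ (because $|mD|$ is free and $f$ is an isomorphism at the generic point of $S$), this gives no control over $\sigma_{S_X}(D_X-\delta A')$, which may well be positive for every $\delta>0$. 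Choosing $A'$ to avoid $S_X$ as a \emph{divisor} is irrelevant; what you would need is a statement about asymptotic base loci. Two further, more minor, problems: (i) the reduction to $\bQ$-factorial $X$ via the pair $(X,f^{-1}_*\Delta+\Gamma)$ requires $K_X+f^{-1}_*\Delta+\Gamma$ to be $\bQ$-Cartier, which is not automatic for a pushforward; you should instead use $(X,\Delta_X+D_X+\Gamma)$, which is $\bQ$-Cartier because it is $\sim_\bQ0$. (ii) The inequality $\Theta\le\Delta_X+D_X+\Gamma$ is false as a divisor inequality (since $\epsilon E_0$ is added with $E_0$ and $D_X$ generically having no common components), so the monotonicity argument for discrepancies does not apply as stated; the correct (and standard) argument is via a fixed log resolution of $(X,\Delta_X+D_X+E_0+\Gamma+S_X)$ and affine-linearity of discrepancies in $(\epsilon,\epsilon')$, but this again needs the coefficient of $S_X$ to drop below $1$, hence again needs the unproved $\mathrm{mult}_{S_X}(E_0)=0$.
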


\begin{proof}
Let $H\subseteq Y$ be a very ample divisor containing the closure of the image of all the $f$-exceptional divisors. Choosing $H$ to be general we may assume that $H$ does not contain any component of $\lfloor \Delta \rfloor$. Since $(Y,\Delta)$ is plt, for $0<\epsilon\ll \delta\ll 1$ the pair $(Y,\Delta'=\Delta-\epsilon\lfloor \Delta \rfloor+\delta H)$ is klt. In addition, $-(K_Y+\Delta')$ is still ample and we still have $a(E;Y,\Delta')\le0$ for every $f$-exceptional divisor $E$. Hence the statement follows from Lemma \ref{lem:ft2}.
\end{proof}

\subsection{Non-klt centers}
The following results prove to be useful when dealing with non-klt pairs later.

\iffalse
\begin{lem} \cite[Theorem 2.13]{jiang} \label{lem:raylength}
Let $(X,D)$ be a pair and $R$ a $(K_{X}+D)$-negative
extremal ray. Assume that
\begin{enumerate}
\item $R$ is generated by a curve;
\item Every curve generating $R$ is not contained in $\mathrm{Nklt}(X,D)$.
\end{enumerate}
Then $R$ is generated by a rational curve $C$ such that $0<-(K_{X}+D\cdot C)\le2\dim X$.
\end{lem}
\fi

\begin{lem} \cite[Theorem 1.1(5)]{fujino} \label{lem:raylength} 
Let $(X,D)$ be a pair and $R$ an $(K_X+D)$-negative extremal ray such that
\[R\cap \overline{NE}(X)_{\mathrm{Nlc}(X,D)}=\{0\}\]
where $\overline{NE}(X)_{\mathrm{Nlc}(X,D)}=\mathrm{Im}(\overline{NE}(\mathrm{Nlc}(X,D))\rightarrow\overline{NE}(X))$.
Then $R$ is generated by a rational curve $C$ such that $0<-(K_{X}+D\cdot C)\le2\dim X$.
\end{lem}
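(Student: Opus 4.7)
The statement is a length estimate for extremal rays, extending Kawamata's classical result in the klt case to pairs with arbitrary singularities. The strategy I would follow combines the generalized cone theorem for non-lc pairs with a bend-and-break argument performed on a suitable dlt modification.

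First I would invoke the cone theorem for pairs (in the form established by Ambro and Fujino), which decomposes $\overline{NE}(X)$ as the sum of $\overline{NE}(X)_{(K_X+D)\ge 0}$, the image $\overline{NE}(X)_{\mathrm{Nlc}(X,D)}$, and at most countably many $(K_X+D)$-negative extremal rays, each spanned by a rational curve. Under the hypothesis $R \cap \overline{NE}(X)_{\mathrm{Nlc}(X,D)} = \{0\}$, the ray $R$ appears as one of these extremal rays, so it is spanned by a rational curve $C_0$ that may be chosen with $C_0 \not\subseteq \mathrm{Nlc}(X,D)$.

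To obtain the numerical bound $-(K_X+D)\cdot C \le 2n$, I would pass to a dlt blowup $\phi : Y \to X$ of $(X,D)$, producing a $\bQ$-factorial dlt pair $(Y,\Delta)$ with $K_Y + \Delta \sim_\bQ \phi^*(K_X+D)$ over the lc locus of $(X,D)$. Since $R$ avoids $\overline{NE}(X)_{\mathrm{Nlc}(X,D)}$, its lift to $Y$ determines a $(K_Y+\Delta)$-negative extremal ray supported in the dlt region. Kawamata's length theorem for dlt pairs (proved via bend-and-break applied on $Y$, which is smoother than $X$) then yields a rational curve $\tilde C \subseteq Y$ generating this lifted ray with $0 < -(K_Y+\Delta)\cdot \tilde C \le 2n$. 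Pushing forward via $\phi$ and using the projection formula produces the required rational curve $C = \phi_*\tilde C$ on $X$ with $[C]\in R$ and $0<-(K_X+D)\cdot C \le 2n$.

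The main obstacle is controlling the contribution of the non-lc locus: rational curves contained in $\mathrm{Nlc}(X,D)$ can have arbitrarily negative $(K_X+D)$-degree, so without a separation hypothesis no length bound is possible. The assumption $R \cap \overline{NE}(X)_{\mathrm{Nlc}(X,D)} = \{0\}$ is precisely what rules this out, letting one argue as though $(X,D)$ were log canonical near the generators of $R$. A secondary technical point is that the dlt blowup must be chosen so that $K_Y+\Delta = \phi^*(K_X+D)$ holds exactly over the lc locus, which ensures the projection formula transfers the length bound from $Y$ back to $X$ without loss.
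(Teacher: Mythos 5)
The paper does not actually prove this lemma: it is cited as a black box from Fujino's \emph{Fundamental theorems for the log minimal model program} (Theorem 1.1(5)), so there is no internal argument in the paper to compare your attempt against. Your sketch is a reasonable outline in spirit, but as written it has two gaps that a full proof would need to close.

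First, the description of the dlt modification glosses over the central difficulty. When $(X,D)$ is not log canonical you cannot arrange $K_Y+\Delta\sim_\bQ\phi^*(K_X+D)$ with $(Y,\Delta)$ dlt, not even ``over the lc locus'' as a global $\bQ$-linear equivalence; what a dlt modification actually gives is a $\bQ$-factorial dlt pair $(Y,\Delta)$ with $K_Y+\Delta=\phi^*(K_X+D)-N$, where $N\ge 0$ is $\phi$-exceptional with $\phi(\mathrm{Supp}\,N)\subseteq\mathrm{Nlc}(X,D)$. The projection formula then reads $-(K_X+D)\cdot\phi_*\tilde C=-(K_Y+\Delta)\cdot\tilde C - N\cdot\tilde C$, so a length bound on $Y$ only descends to $X$ once you have checked both that $N\cdot\tilde C\ge 0$ (e.g.\ $\tilde C\not\subseteq\mathrm{Supp}\,N$) and that $\phi_*\tilde C$ is a curve rather than a point. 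Neither is automatic for the rational curve that a bend-and-break argument on $Y$ produces.

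Second, lifting the extremal ray $R$ from $X$ to $Y$ is a genuine step, not a formality: $\rho(Y)>\rho(X)$, and the preimage of $R$ in $\overline{NE}(Y)$ is a higher-dimensional face containing many extremal rays, including $\phi$-exceptional ones and rays spanned by curves lying in $\mathrm{Supp}\,N$. You would need to single out the correct ray (say, by running a $(K_Y+\Delta)$-MMP relative to the pullback of a nef supporting divisor for $R$) and then verify the two conditions from the previous paragraph for the curve that the length theorem on $Y$ produces. It is worth noting that Fujino's actual proof does not pass through a dlt modification at all; it works directly on $X$ using Ambro's theory of quasi-log structures, which sidesteps the lifting problem entirely. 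A pullback-and-descend strategy along the lines you propose does appear in the literature for related length bounds and can be made to work, but the points above are exactly where the real effort would go.
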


\iffalse
\begin{proof}
By \cite[Theorem 0.2]{ambro}, $R$ is generated by a curve and by assumption every curve generating $R$ is not contained in $\mathrm{Nklt}(X,D)$. Now apply Lemma \ref{lem:raylength}.
\end{proof}
\fi

\begin{lem} \label{lem:connectedness}
Let $(X,D)$ be a pair such that $-(K_X+D)$ is ample. Then $\mathrm{Nklt}(X,D)$ is connected.
\end{lem}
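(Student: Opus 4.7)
This is the classical Shokurov--Koll\'ar connectedness lemma (cf.\ \cite[Theorem 5.48]{km98}), and my plan is to sketch the standard Kawamata--Viehweg vanishing argument.

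Take a log resolution $\pi:Y\to X$ of $(X,D)$ and let $\Delta\subseteq Y$ be the reduced divisor whose prime components are exactly the divisors $F$ on $Y$ with discrepancy $a(F;X,D)\le-1$. Since $\pi$ has connected fibers and $\mathrm{Nklt}(X,D)=\pi(\mathrm{Supp}(\Delta))$, it is enough to prove that $\Delta$ is connected. I rewrite the discrepancy formula $K_Y-\pi^*(K_X+D)=\sum_i a_i F_i$ by splitting the components according to whether $a_i>-1$ or $a_i\le-1$: this produces an effective $\pi$-exceptional divisor $A$ (the round-up of the positive part, supported on the klt components) and an effective integer divisor $N$ supported on $\Delta$ (the excess of $|a_i|$ over $1$ on the non-klt components), having disjoint supports, together with an identity
\[
\lceil A\rceil-\lfloor N\rfloor-\Delta-K_Y\;\equiv\;-\pi^*(K_X+D)+B,
\]
where $B$ is an effective $\bQ$-divisor with snc support and coefficients in $[0,1)$.

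Since $-(K_X+D)$ is ample, the right-hand side is a klt snc boundary plus a nef and big divisor, so Kawamata--Viehweg vanishing gives $H^1(Y,\cO_Y(\lceil A\rceil-\lfloor N\rfloor-\Delta))=0$. Taking cohomology in the short exact sequence
\[
0\to\cO_Y(\lceil A\rceil-\lfloor N\rfloor-\Delta)\to\cO_Y(\lceil A\rceil-\lfloor N\rfloor)\to\cO_\Delta(\lceil A\rceil-\lfloor N\rfloor|_\Delta)\to 0
\]
then yields a surjection $H^0(Y,\cO_Y(\lceil A\rceil-\lfloor N\rfloor))\twoheadrightarrow H^0(\Delta,\cO_\Delta(\lceil A\rceil-\lfloor N\rfloor|_\Delta))$. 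The source has dimension at most one, because $\lceil A\rceil$ is $\pi$-exceptional and effective while $\lfloor N\rfloor$ is effective, so $\pi_*\cO_Y(\lceil A\rceil-\lfloor N\rfloor)\subseteq\cO_X$ and $H^0(X,\cO_X)=\bC$. The target contains $H^0(\Delta,\cO_\Delta)$ as a subspace via $\cO_\Delta\hookrightarrow\cO_\Delta(\lceil A\rceil-\lfloor N\rfloor|_\Delta)$, whose dimension equals the number of connected components of $\Delta$; hence $\Delta$ has at most one connected component.

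The main technical point is the bookkeeping in the decomposition, in particular handling the case in which $D$ has components of coefficient $\ge 1$ whose strict transforms must then be included in $\Delta$, and verifying the inclusion $\cO_\Delta\hookrightarrow\cO_\Delta(\lceil A\rceil-\lfloor N\rfloor|_\Delta)$ component by component. The latter reduces to showing that the restriction has nonnegative degree on each prime component of $\Delta$, which follows from the disjointness of $\mathrm{Supp}(A)$ from $\Delta$ together with the snc structure of the log resolution. Everything else is formal.
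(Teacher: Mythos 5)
The paper disposes of this lemma with a one-line citation to the Koll\'ar--Shokurov connectedness theorem (\cite[17.4]{flip}), so you are supplying a proof from scratch. Your approach is the standard Kawamata--Viehweg vanishing argument, and the setup (discrepancy decomposition, vanishing for $\cO_Y(\lceil A\rceil-\lfloor N\rfloor-\Delta)$, counting connected components via $h^0$) is the right strategy. However, there is a genuine gap in the last step, and it is exactly in the non-lc regime, which is the regime this paper needs (e.g.\ the divisor $\Delta$ built in the proof of Theorem~\ref{main:volbdd} has multiplicity $\ge n$ at a point and is nowhere near lc there).

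The problem is the claimed injection $\cO_\Delta\hookrightarrow\cO_\Delta\bigl((\lceil A\rceil-\lfloor N\rfloor)|_\Delta\bigr)$. When $(X,D)$ is lc one has $N=0$ and the injection is clear since $\lceil A\rceil$ meets $\Delta$ properly; but when $(X,D)$ is not lc, $\lfloor N\rfloor$ is a nonzero effective divisor whose support \emph{is contained in} $\Delta$, so $\cO_Y(-\lfloor N\rfloor)|_\Delta$ is a genuinely negative twist by conormal-type bundles, and there is no reason for $\cO_\Delta\bigl((\lceil A\rceil-\lfloor N\rfloor)|_\Delta\bigr)$ to have any sections at all, let alone one that is nonzero on each component. (For instance, for $X=\bP^2$ and $D=2L$ a double line, the restricted bundle on $\Delta=L$ is $\cO_{\bP^1}(-1)$.) Your suggested fix --- that ``the restriction has nonnegative degree on each prime component'' --- does not work: the components of $\Delta$ have dimension $\ge 1$ in general so ``degree'' is not even defined, and even where it is, nonnegative degree does not give a nonzero map from $\cO$; worse, the degree need not be nonnegative, since the disjointness you invoke controls $\lceil A\rceil|_\Delta$ but says nothing about $-\lfloor N\rfloor|_\Delta$. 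The correct remedy is to change the middle term of the short exact sequence: use
\[
0\to\cO_Y(\lceil A\rceil-\lfloor N\rfloor-\Delta)\to\cO_Y(\lceil A\rceil)\to\mathcal{Q}\to 0,
\]
so that $\mathcal{Q}$ is a \emph{quotient} of $\cO_Y(\lceil A\rceil)$ supported on $\Delta$ (a thickened structure sheaf twisted by the effective exceptional divisor $\lceil A\rceil$), rather than the restriction of a line bundle. The image of the constant section $1\in H^0(Y,\cO_Y(\lceil A\rceil))$ is then automatically nonzero on every connected component of $\mathrm{Supp}(\mathcal{Q})=\mathrm{Supp}(\Delta)$, which gives $h^0(\mathcal{Q})\ge$ the number of components; combined with $h^0(Y,\cO_Y(\lceil A\rceil))=1$ and the $H^1$ vanishing you already proved, this yields the contradiction. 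Equivalently, push everything down to $X$ and phrase the argument via Nadel vanishing: $\cJ=\pi_*\cO_Y(\lceil A\rceil-\lfloor N\rfloor-\Delta)$ is the multiplier ideal, $H^1(X,\cJ)=0$, and $\cO_X\twoheadrightarrow\cO_X/\cJ$ forces $\mathrm{Supp}(\cO_X/\cJ)=\mathrm{Nklt}(X,D)$ to be connected.
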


\begin{proof}
This is a special case of the connectedness lemma \cite[17.4]{flip} when the target space is a point.
\end{proof}

\begin{lem} \label{lem:klt} %\cite[Proposition 9.5.13]{laz2} 
Let $D$ be an effective $\bQ$-divisor on a smooth variety $X$ and $x\in X$. Suppose that $\mult_x D<1$, then $(X,D)$ is terminal in a neighbourhood of $x$.
\end{lem}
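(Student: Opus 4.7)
The plan is to argue by induction on $n = \dim X$, with inversion of adjunction as the main tool. The base case $n=1$ is trivial: a smooth curve has no non-trivial birational modifications, so no exceptional divisors exist and $(X,D)$ is vacuously terminal. For the inductive step ($n \geq 2$, assuming the lemma in dimension $n-1$), I would first shrink $X$ to a Zariski open neighborhood of $x$ on which $\mathrm{mult}_y D < 1$ for every $y$; this is allowed by the upper semi-continuity of the multiplicity function. It then suffices to show $a(E;X,D) > 0$ for every prime divisor $E$ exceptional over $X$ with $x \in Z := c_X(E)$, where $\mathrm{codim}_X(Z) \geq 2$.

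Next I would pass to a general point $y$ of $Z$, at which $Z$ is smooth, and choose local coordinates $t_1,\dots,t_n$ around $y$ with $Z = \{t_1 = \cdots = t_c = 0\}$ where $c = \mathrm{codim}_X(Z)$. Setting $H := \{t_1 = 0\}$ produces a smooth prime divisor (in a neighborhood of $y$) which locally contains $Z$, and a small generic perturbation within the linear system of local hyperplanes containing $Z$ guarantees in addition that $H$ is not contained in any component of $\mathrm{Supp}(D)$ and that $\mathrm{mult}_y(D|_H) = \mathrm{mult}_y D < 1$. The induction hypothesis, applied to $(H, D|_H)$ on the smooth $(n-1)$-dimensional variety $H$, then gives that $(H, D|_H)$ is terminal in a neighborhood of $y$, and in particular klt.

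The finishing step is inversion of adjunction (e.g.\ \cite[Theorem 5.50]{km98}), which promotes the klt-ness of $(H, D|_H)$ to the plt-ness of $(X, D + H)$ near $H$ around $y$. Since $E$ is exceptional over $X$ while $H$ is a divisor on $X$, we have $E \neq H$, so plt-ness of $(X, D+H)$ yields $a(E;X, D+H) > -1$. Because $E$ dominates $Z$ and $t_1 \in I_Z$, one has $\mathrm{ord}_E(H) = \mathrm{ord}_E(t_1) \geq 1$. Rewriting
\[
a(E; X, D) \;=\; a(E; X, D+H) + \mathrm{ord}_E(H) \;>\; -1 + 1 \;=\; 0,
\]
one concludes that $(X,D)$ is terminal at $x$.

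The main technical obstacle is arranging the cutting divisor $H$ so that all three conditions (containment of $Z$, smoothness of $H$ at $y$, and preservation of the $<1$ multiplicity of the restriction $D|_H$) hold simultaneously; this reduces to a Bertini-type genericity argument inside the local family of hyperplanes through $y$ containing $Z$, which is non-empty precisely because $Z$ has codimension $\geq 2$ in the smooth variety $X$. Once $H$ is in hand, the key input $\mathrm{ord}_E(H) \geq 1$ (which fails if one cuts by a generic hyperplane not containing $Z$) is what turns the plt inequality $> -1$ into the terminal inequality $> 0$.
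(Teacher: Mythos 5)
Your proof is correct, and it takes a genuinely different route from the paper's. The paper's argument (carried out in Lemma \ref{lem:terminal}, which this lemma cites) splits into two cases according to whether the center $Z$ of the bad divisor $E$ is positive-dimensional or a point: when $\dim Z\ge 1$ it cuts by a \emph{general} hyperplane section \emph{not} containing $Z$, pushes everything through a log resolution, and identifies $a(E;X,D)$ with the discrepancy of the restricted divisor via ordinary adjunction; when $Z$ is a point it blows up and recurses on the length of $E$. Your argument instead always cuts by a hypersurface $H$ that \emph{contains} $Z$, applies the inductive hypothesis to $(H,D|_H)$, and then uses inversion of adjunction to lift klt-ness to plt-ness of $(X,D+H)$, finishing with the elementary but decisive observation that $\mathrm{ord}_E(H)\ge 1$ since $H\supseteq Z$. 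This eliminates the case split entirely and is arguably cleaner, at the price of invoking the deeper input of inversion of adjunction; the paper's route is more elementary and, as a side benefit, also yields the refinement in part (2) of Lemma \ref{lem:terminal} about centers of canonical singularities, which your argument would not give directly (at $\mathrm{mult}_xD=1$ the chain only produces $a(E;X,D)\ge 0$ with no control over the equality locus).

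Two small points worth tightening. First, after shrinking so that $\mathrm{mult}_y D<1$ for every $y$, you should run the argument for \emph{every} exceptional $E$ whose center meets the shrunk open set, not only those with $x\in c_X(E)$; being terminal in a neighbourhood of $x$ requires controlling all discrepancies over that neighbourhood, and your argument already does this once you pick a general $y$ on the relevant center. Second, the passage "terminal, hence klt" for $(H,D|_H)$ is not automatic from terminality alone — it also uses that the coefficients of $D|_H$ near $y$ are $<1$, which does follow from $\mathrm{mult}_y(D|_H)<1$ but deserves a sentence. Neither of these is a gap, just a matter of phrasing.
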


\begin{proof}
This follows from \cite[3.14.1]{kol95}. See also Lemma \ref{lem:terminal}.
\end{proof}

\subsection{Volume of divisors}

\begin{defn}
Let $X$ be a proper normal variety and $D$ a $\bQ$-divisor on $X$. The \emph{volume} of $D$ is defined as
\[\vol(D)=\limsup_{m\rightarrow\infty}\frac{h^0(mD)}{m^n/n!}\]
where $h^0(mD)=\dim H^0(X,\cO_X(\left\lfloor mD \right\rfloor))$.
\end{defn}

By \cite[Theorem 3.5]{mihai}, the $\limsup$ is actually a limit and this definition agrees with the usual definition of volume \cite[2.2C]{laz} when $D$ is $\bQ$-Cartier. For example, if $D$ is nef then $\vol(D)=(D^n)$.

If $f:X\dashrightarrow Y$ is a birational map and $D$ is a divisor on $Y$ then we can define the birational pullback $f^*D$ as $p_*q^*D$ where $p:W\rightarrow X$ and $q:W\rightarrow Y$ resolve the indeterminacy of $f$. It should be noted that in general $g^*f^*\neq (gf)^*$ for birational maps $f:X\dashrightarrow Y$ and $g:Y\dashrightarrow Z$.

\begin{lem} \label{lem:vol}
Let $f:X\dashrightarrow Y$ be a birational contraction, $L$ a big and nef line bundle on $Y$ and $E$ an effective divisor on $X$, then $\vol(f^*L-E)\le\vol(L)$ with equality if and only if $E=0$.
\end{lem}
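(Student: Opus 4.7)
The plan is to reduce the volume comparison on $X$ to one on a common resolution $W$ of $f$, where the pullback of $L$ is an honest big and nef Cartier divisor on a smooth variety, and then invoke the standard theory of diminished base loci. Fix a common resolution $p\colon W\to X$, $q\colon W\to Y$ and set $\widetilde L:=q^*L$ and $\widetilde E:=p^{-1}_*E$. Then $\widetilde L$ is big and nef on $W$ with $\vol_W(\widetilde L)=\vol_Y(L)$, and $\widetilde E$ is effective, nonzero precisely when $E$ is.

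The first step is the section identification
\[
H^0\!\bigl(X,\cO_X(m(f^*L-E))\bigr)=H^0\!\bigl(W,\cO_W(m(\widetilde L-\widetilde E))\bigr),
\]
for every sufficiently divisible $m\ge 1$, viewed as subspaces of $K(X)=K(W)$. Both sit inside $H^0(Y,\cO_Y(mL))$, and the conditions imposed by $E$ match on the two sides: at non-contracted prime divisors of $X$ (equivalently, prime divisors of $Y$) and at $f$-contracted prime divisors of $X$ (equivalently, $q$-exceptional strict transforms on $W$, along which the coefficient of $\widetilde L$ vanishes). The birational contraction hypothesis ensures that every $p$-exceptional prime divisor $\Gamma$ on $W$ is also $q$-exceptional, and the would-be extra condition $\mathrm{ord}_\Gamma(s)+m\cdot\mathrm{coeff}_\Gamma(\widetilde L)\ge 0$ from such a $\Gamma$ is automatic for any $s\in H^0(Y,mL)$, since $q^*s$ is a genuine section of $\cO_W(m\widetilde L)$. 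Passing to $\limsup$ in $m$ gives $\vol_X(f^*L-E)=\vol_W(\widetilde L-\widetilde E)$; the special case $E=0$ yields $\vol_X(f^*L)=\vol_Y(L)$.

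For the strict inequality, I would invoke the standard characterization of the diminished base locus (Ein--Lazarsfeld--Musta\c t\u a--Nakamaye--Popa, or Nakayama): for a big $\bQ$-Cartier divisor $D$ on a smooth projective variety and a prime divisor $G$, $\vol(D-tG)<\vol(D)$ for all $t>0$ with $D-tG$ big if and only if $G\not\subseteq\mathbb{B}_{-}(D)$. Since $\widetilde L$ is nef on $W$, $\mathbb{B}_{-}(\widetilde L)=\varnothing$; applying the theorem to any prime component $G$ of $\widetilde E$, together with the monotonicity $\vol_W(\widetilde L-\widetilde E)\le\vol_W(\widetilde L-tG)$ for sufficiently small $t>0$, yields $\vol_W(\widetilde L-\widetilde E)<\vol_W(\widetilde L)=\vol_Y(L)$ whenever $\widetilde E\ne 0$, equivalently whenever $E\ne 0$.

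The main obstacle is the section identification in the first step. Care is required because $f^*L-E$ is only a Weil divisor on $X$ (not necessarily $\bQ$-Cartier), so $\cO_X(f^*L-E)$ must be interpreted as the appropriate reflexive sheaf, with a representative of $L$ whose pullback has zero coefficient along the $f$-contracted divisors of $X$. The contraction hypothesis is precisely what is needed to rule out ``orphan'' $p$-exceptional divisors on $W$ that would otherwise impose additional constraints beyond those automatic for sections in $H^0(Y,mL)$.
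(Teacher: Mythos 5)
Your argument is correct, and it reaches the conclusion by a related but technically distinct route. The paper's proof works directly on $X$: after recording $\vol(f^*L)=\vol(L)$ (because $f_*$ identifies the section spaces), it quotes Theorem~A of Fulger--Koll\'ar--Lehmann, which is stated for $\bQ$-divisors on normal (possibly singular) projective varieties and gives $\vol(f^*L-E)\le\vol(f^*L)$ with equality if and only if $E\le N_\sigma(f^*L)$; one finishes by noting that $f^*L$, being the birational pushforward of a big nef divisor, is movable, so $N_\sigma(f^*L)=0$. You instead pull everything back to a smooth common resolution $W$, establish $\vol_X(f^*L-E)=\vol_W(q^*L-p^{-1}_*E)$ by matching section spaces inside $H^0(Y,mL)$, and then invoke the classical smooth-variety theory of $\sigma$-decompositions and $\mathbb{B}_{-}$ (Nakayama/ELMNP); the birational-contraction hypothesis enters exactly to rule out extra conditions from $p$-exceptional divisors, as you point out. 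The trade-off is that the paper's route is shorter but leans on the more general Fulger--Koll\'ar--Lehmann machinery for normal varieties, while yours needs only the well-worn smooth case at the cost of the explicit section bookkeeping. One small inaccuracy worth flagging: the coefficient of $\widetilde L=q^*L$ along the $q$-exceptional strict transform of an $f$-contracted prime divisor of $X$ need not vanish, nor is a representative of $L$ with that property needed (or in general available, since $L$ is merely nef). What actually makes the conditions match at those divisors is the paper's definition $f^*L:=p_*q^*L$, which forces the coefficient of $f^*L$ on $X$ to agree with the coefficient of $q^*L$ on $W$ at every non-$p$-exceptional prime divisor, whatever that common value is. With this adjustment your section identification, and hence the whole proof, is clean.
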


\begin{proof}
Since $f$ is a birational contraction, $f_*$ induces an isomorphism $H^0(X,mf^*L)\cong H^0(Y,mL)$, hence $\vol(L)=\vol(f^*L)$. By \cite[Theorem A]{mihai}, $\vol(f^*L-E)\le\vol(f^*L)$ with equality if and only if $E\le N_\sigma(f^*L)$. As $L$ is big and nef, $f^*L$ is movable, hence $N_\sigma(f^*L)=0$ and the lemma follows.
\end{proof}

\section{The moving Seshadri constant} \label{sec:moving}

\iffalse
As we mentioned in the introduction, Seshadri constants do not behave very well when running MMP and we need to extend the definition to a larger class of divisors in such a way that we can compare the corresponding constants on different birational models.

\begin{defn}
Let $L$ be a $\mathbb{Q}$-divisor on $X$ and $p\in X$ a smooth
point. The \emph{moving Seshadri constant} of $L$ at $p$ is defined as
\[
\epsilon_{m}(L,p)=\limsup\,\frac{s(mL,p)}{m}
\]
where the limit is taken over all intergers $m$ such that $mL$ is a $\bZ$-divisor.
\end{defn}

By \cite[Proposition 6.6]{elmn}, the above definition coincides with the one in \cite{nakamaye} when $L$ is $\bQ$-Cartier. If in addition $L$ is ample then by (\ref{eq:s}) this also agrees with the usual definition of Seshadri constant. 
\fi 

Recall that the moving Seshadri constant of a $\bQ$-divisor is defined by (\ref{eq:s}). More generally, if $L$ is a divisor on $X$ and $W$ is a sub linear system of $|L|$ then we can define $s(W,x)$ similarly as the largest integer $s$ such that $W$ generates all $s$-jets at $x$. The  definition of moving Seshadri constant then extends to sequence of linear systems as well. We leave the details to the reader.

It follows almost immediately from the definition and the lower semi-continuity of $s(\cF,x)$ that many properties
that hold true for the usual Seshadri constants generalize to moving Seshadri constants. For example, we have $\sqrt[n]{\vol(L)}\ge\epsilon_m(L)$ and $\epsilon_{m}(L)>0$ if and only if $L$ is big. It is also clear that if $D$ is an effective divisor then  $\epsilon_{m}(L)\ge\epsilon_{m}(L-D)$. Moreover if $\epsilon_{m}(L,x)\ge\lambda$
for some smooth point $x$, then $\epsilon_{m}(L,x)\ge\lambda$
for very general point $x\in X$. In particular, the moving Seshadri
constant attains its maximum $\epsilon_{m}(L)$
at very general point of $X$ (similarly we let $s(L)$ or $s(W)$ be the $s$-value of the corresponding divisor or linear system at a very general point). We can also compare moving Seshadri constants of a divisor and its restriction to a subvariety.

\begin{lem} \label{lem:restrict_m}
Let $L$ be a $\bQ$-Cartier $\bQ$-divisor on $X$. Let $Y$ be a positive dimensional subvariety of $X$ and $x$ a smooth point of both $X$ and $Y$. %Suppose that $L$ is $\bQ$-Cartier, or $Y$ is a local complete intersection in $X$ \emph{(}so that $L|_Y$ is well defined\emph{)}, 
Then $\epsilon_m(L,x)\le\epsilon_m(L|_{Y},x)$.
\end{lem}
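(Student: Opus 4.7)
The plan is to compare the orders of jet separation $s(mL,x)$ and $s(mL|_Y,x)$ directly via the restriction map $H^{0}(X,mL)\to H^{0}(Y,mL|_Y)$, using that the closed embedding $Y\hookrightarrow X$ induces a surjection on jet algebras at $x$. Throughout, it suffices to work with positive integers $m$ for which $mL$ is Cartier (so that $mL|_Y$ is automatically Cartier on $Y$), and further to assume $s(mL,x)\ge 0$, since the desired inequality $s(mL|_Y,x)\ge s(mL,x)$ is vacuous otherwise.

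The key local input is that the surjection of local rings $\cO_{X,x}\twoheadrightarrow\cO_{Y,x}$ induced by $Y\subseteq X$ sends $\fm_{X,x}$ onto $\fm_{Y,x}$, hence $\fm_{X,x}^{s+1}$ onto $\fm_{Y,x}^{s+1}$, and therefore descends to a surjection
\[
\cO_{X,x}/\fm_{X,x}^{s+1}\twoheadrightarrow\cO_{Y,x}/\fm_{Y,x}^{s+1}
\]
for every $s\ge 0$. Smoothness of $x$ on both $X$ and $Y$ is not actually required for this step; it enters only to ensure that $s(\,\cdot\,,x)$ is the correct asymptotic invariant. Sheaf-theoretically, the displayed map gives a surjection of skyscraper sheaves $\cO_X/\fm_x^{s+1}\twoheadrightarrow\cO_Y/\fm_{Y,x}^{s+1}$.

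Setting $s=s(mL,x)$ and tensoring with the line bundle $mL$ (locally free of rank one at $x$), I would then contemplate the commutative diagram
\[
\xymatrix{
H^{0}(X,mL)\ar[r]\ar[d] & H^{0}(Y,mL|_Y)\ar[d]\\
H^{0}(mL\otimes\cO_X/\fm_x^{s+1})\ar@{->>}[r] & H^{0}(mL|_Y\otimes\cO_Y/\fm_{Y,x}^{s+1})
}
\]
whose horizontal arrows are restriction to $Y$ and whose verticals are jet evaluation. The bottom arrow is surjective by the previous paragraph (taking global sections of a surjection of sheaves supported at the point $x$ stays surjective), while the left vertical is surjective by the defining property of $s$. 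A one-line diagram chase then forces the right vertical to be surjective, whence $s(mL|_Y,x)\ge s(mL,x)$.

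Dividing by $m$ and passing to $\limsup$ over $m\to\infty$ with $mL$ Cartier yields $\epsilon_m(L|_Y,x)\ge\epsilon_m(L,x)$. The entire argument is essentially a diagram chase; the only mildly delicate point is verifying the jet-sheaf surjection, handled above, so I anticipate no real obstacle.
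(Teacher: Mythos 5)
Your proof is correct and is essentially the same as the paper's: both build the same commutative square (you have merely transposed rows and columns) in which one arrow is the jet evaluation on $X$, another is the jet evaluation on $Y$, and the map $H^0(\cO_X/\fm_x^{s+1}\otimes mL)\to H^0(\cO_Y/\fm_{Y,x}^{s+1}\otimes mL|_Y)$ is surjective because $\cO_{X,x}\twoheadrightarrow\cO_{Y,x}$ carries $\fm_{X,x}^{s+1}$ onto $\fm_{Y,x}^{s+1}$; the diagram chase and the passage to $\limsup$ are identical.
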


\begin{proof}
Consider the following commutative diagram
\[
\xymatrix{
H^{0}(X,L)\ar[r]\ar[d] & H^{0}(X,L\otimes\mathcal{O}_{X}/\mathfrak{m}_x^{s})\ar@{->>}[d]\\
H^{0}(Y,L|_{Y})\ar[r] & H^{0}(Y,L|_{Y}\otimes\mathcal{O}_{Y}/\mathfrak{m}_x^{s})
}
\]
If the top row is surjective, so is the bottom row. Hence $s(L,x)\le s(L|_{Y},x)$
for any Cartier divisor $L$ and $\epsilon_{m}(L,x)=\limsup\,\frac{s(mL,x)}{m}\le\limsup\,\frac{s(mL|_{Y},x)}{m}=\epsilon_{m}(L|_{Y},x)$.
\end{proof}

Apart from these similarities with the usual Seshadri constants, the
moving Seshadri constants have the additional nice property that they
never decrease under birational contraction:

\begin{lem} \label{lem:nondec}
Let $\phi:X\dashrightarrow Y$ be a birational contraction between
normal varieties and $D$ an effective divisor on $X$. Let $x\in X$
be a smooth point such that $\phi$ is an isomorphism in a neighbourhood
of $x$, then $\epsilon_{m}(D,x)\le\epsilon_{m}(\phi_{*}D,\phi(x))$.
\end{lem}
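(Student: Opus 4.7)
The plan is to imitate the proof of Lemma \ref{lem:restrict_m}, replacing the restriction map with push-forward of sections. Two ingredients are needed. First, for every integer $m$ such that $mD$ is integral, there should be a natural inclusion $H^{0}(X,\lfloor mD\rfloor)\hookrightarrow H^{0}(Y,\lfloor m\phi_{*}D\rfloor)$. Second, because $\phi$ is an isomorphism near $x$, the quotient sheaves $\cO_{X}(\lfloor mD\rfloor)\otimes\cO_{X}/\fm_{x}^{s+1}$ and $\cO_{Y}(\lfloor m\phi_{*}D\rfloor)\otimes\cO_{Y}/\fm_{\phi(x)}^{s+1}$ should be canonically isomorphic.

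For the first point, I would view both $H^{0}$-spaces as subspaces of the common function field $k(X)=k(Y)$, cut out by divisorial conditions $\mathrm{ord}_{E}(f)+\lfloor mD\rfloor(E)\ge 0$ indexed by prime divisors $E$ on $X$ (respectively on $Y$). Since $\phi$ is a birational contraction, every prime divisor on $Y$ admits a unique strict transform on $X$ inducing the same valuation on $k(X)=k(Y)$, and the coefficient of this strict transform in $\lfloor mD\rfloor$ coincides with that of the original in $\phi_{*}\lfloor mD\rfloor=\lfloor m\phi_{*}D\rfloor$. The only extra constraints appearing on the $X$-side are those indexed by $\phi$-exceptional prime divisors, so any $f\in H^{0}(X,\lfloor mD\rfloor)$ automatically lies in $H^{0}(Y,\lfloor m\phi_{*}D\rfloor)$. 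For the second point, $\phi$ restricts to an isomorphism of a neighbourhood $U\ni x$ onto a neighbourhood $V\ni\phi(x)$; smoothness of $x$ makes both reflexive rank-one sheaves invertible at the points in question, and under this local isomorphism they are identified, so the two infinitesimal quotients agree.

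Combining the two ingredients yields a commutative square
\[
\begin{array}{ccc}
H^{0}(X,\lfloor mD\rfloor) & \longrightarrow & H^{0}(X,\cO_{X}(\lfloor mD\rfloor)\otimes\cO_{X}/\fm_{x}^{s+1})\\
\downarrow & & \downarrow\cong\\
H^{0}(Y,\lfloor m\phi_{*}D\rfloor) & \longrightarrow & H^{0}(Y,\cO_{Y}(\lfloor m\phi_{*}D\rfloor)\otimes\cO_{Y}/\fm_{\phi(x)}^{s+1})
\end{array}
\]
with the right vertical arrow an isomorphism. Surjectivity of the top row then forces surjectivity of the bottom row by an immediate diagram chase, whence $s(mD,x)\le s(m\phi_{*}D,\phi(x))$ for every integral $m$. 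Dividing by $m$ and passing to $\limsup$ gives $\epsilon_{m}(D,x)\le\epsilon_{m}(\phi_{*}D,\phi(x))$.

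The main obstacle is the first ingredient, where the birational-contraction hypothesis enters essentially: it is precisely the absence of $\phi$-extracted divisors that ensures every divisorial condition on $Y$ is already imposed on $X$. Were $\phi$ allowed to extract a divisor, one could produce sections of $\lfloor m\phi_{*}D\rfloor$ on $Y$ whose lifts to $X$ would have poles along the extracted divisor, and the key inclusion would fail.
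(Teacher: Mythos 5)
Your proof is correct and follows essentially the same route as the paper: both establish the injection $H^0(X,\cO_X(\lfloor mD\rfloor))\hookrightarrow H^0(Y,\cO_Y(\lfloor m\phi_*D\rfloor))$ from the birational-contraction hypothesis and then run a diagram chase against the local isomorphism near $x$, exactly as in the paper's proof of Lemma~\ref{lem:restrict_m}. You have simply spelled out the justification for the injection in more detail than the paper does.
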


\begin{proof}
Since $\phi$ is a birational contraction, it induces an injection $\phi_*:H^0(X,\cO_X(D))\rightarrow H^0(Y,\cO_Y(\phi_*D))$ for any divisor $D$ on $X$. The result then follows by considering a similar diagram as in the previous lemma:
\[
\xymatrix{
H^{0}(X,\cO_X(D))\ar[r]\ar[d] & H^{0}(X,\cO_X(D)\otimes\mathcal{O}_{X}/\mathfrak{m}_x^{s})\ar[d]^\cong\\
H^{0}(Y,\cO_Y(\phi_*D))\ar[r] & H^{0}(Y,\cO_Y(\phi_*D)\otimes\mathcal{O}_{Y}/\mathfrak{m}_{\phi(x)}^{s})
}
\]
\end{proof}

\begin{cor} \label{cor:nondecrease}
Let $\phi:X\dashrightarrow Y$ be a birational
contraction between normal varieties, then $\epsilon_{m}(-K_{X})\le\epsilon_{m}(-K_{Y})$.
\end{cor}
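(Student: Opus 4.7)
The plan is to apply Lemma \ref{lem:nondec} directly to the anticanonical Weil divisor $-K_X$. First I would pick a very general smooth point $x\in X$ at which $\epsilon_m(-K_X,x)=\epsilon_m(-K_X)$; this is possible since, as noted in the preceding discussion, the moving Seshadri constant attains its maximum at a very general point. Because the indeterminacy locus of $\phi$ is a proper closed subset of $X$, the very general $x$ can further be chosen so that $\phi$ is a local isomorphism near $x$, and then $y:=\phi(x)$ is a smooth point of $Y$.

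The key geometric input is the identification $\phi_{*}(-K_X)=-K_Y$ as Weil divisors. To see this, fix a nonzero rational top form $\omega$ on $X$; under the function field identification $K(X)=K(Y)$ induced by $\phi$, the same $\omega$ is also a rational top form on $Y$, and $K_X$ (resp.\ $K_Y$) is represented by its divisor on $X$ (resp.\ on $Y$). Since $\phi$ is a birational contraction, $\phi^{-1}$ has no exceptional divisors, so every prime divisor on $Y$ has a strict transform on $X$ with matching valuation of $\omega$; the only discrepancy is the $\phi$-exceptional divisors on $X$, which are exactly what the pushforward $\phi_{*}$ discards. Hence $\phi_{*}\mathrm{div}_X(\omega)=\mathrm{div}_Y(\omega)$, i.e.\ $\phi_{*}K_X=K_Y$.

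Having established this, applying Lemma \ref{lem:nondec} with $D=-K_X$ gives
\[
\epsilon_m(-K_X)=\epsilon_m(-K_X,x)\le\epsilon_m(\phi_{*}(-K_X),y)=\epsilon_m(-K_Y,y)\le\epsilon_m(-K_Y),
\]
which is the desired inequality. The only subtlety worth flagging is that Lemma \ref{lem:nondec} is stated for an \emph{effective} divisor $D$, whereas $-K_X$ need not be effective. However, inspection of the proof shows that the single ingredient actually used is the injection $\phi_{*}:H^0(X,\cO_X(D))\hookrightarrow H^0(Y,\cO_Y(\phi_{*}D))$, and this is valid for any Weil divisor since a rational function $f$ with $\mathrm{div}_X(f)+D\ge 0$ pushes forward to one with $\mathrm{div}_Y(f)+\phi_{*}D\ge 0$ (the pushforward of a non-negative Weil divisor is non-negative). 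Thus there is no real obstacle, and the statement is a direct consequence of the lemma.
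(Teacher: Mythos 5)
Your proof is correct and takes essentially the same route the paper intends: the corollary is stated immediately after Lemma \ref{lem:nondec} with no separate proof, so the expected argument is exactly to apply that lemma with $D=-K_X$ at a very general point where the maximum is attained and where $\phi$ is a local isomorphism. You correctly fill in the two details the paper leaves implicit, namely the identification $\phi_*(-K_X)=-K_Y$ (via a fixed rational top form and the fact that a birational contraction does not introduce new divisors on the target) and the observation that the effectivity hypothesis in Lemma \ref{lem:nondec} plays no role, since the proof of that lemma already asserts the injection $\phi_*:H^0(X,\cO_X(D))\hookrightarrow H^0(Y,\cO_Y(\phi_*D))$ for an arbitrary Weil divisor $D$. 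So this is a faithful expansion of the intended argument, not a different approach.
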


In addition, the moving Seshadri constant of anticanonical divisor is preserved by taking terminal modification:

\begin{lem} \label{lem:mmodel}
Let $\phi:Y\rightarrow X$ be a terminal modification of $X$, then $\epsilon_m(-K_X)=\epsilon_m(-K_Y)$.
\end{lem}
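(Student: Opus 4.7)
My plan is to prove the two inequalities separately. The direction $\epsilon_m(-K_Y)\le\epsilon_m(-K_X)$ is essentially immediate: the morphism $\phi:Y\to X$, viewed as a birational map $Y\dashrightarrow X$, satisfies Definition \ref{def:bircont} tautologically (take the common resolution $p:W\to Y$, $q:W\to X$ to be $W=Y$, $p=\mathrm{id}_Y$, $q=\phi$, so there are no $p$-exceptional divisors at all), and Corollary \ref{cor:nondecrease} yields the inequality.

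For the reverse inequality $\epsilon_m(-K_X)\le\epsilon_m(-K_Y)$, I would first apply Lemma \ref{lem:discrep} with $D=0$ to write $K_Y+\Delta\sim_{\bQ}\phi^*K_X$ for some effective, $\phi$-exceptional $\bQ$-divisor $\Delta$. The central claim is the natural inclusion $H^0(X,-mK_X)\subseteq H^0(Y,-mK_Y)$ for every $m\ge 1$, with both spaces identified as subsets of $K(X)=K(Y)$. When $mK_X$ is Cartier, an element $f\in H^0(X,-mK_X)$ gives the effective Cartier divisor $\mathrm{div}_X(f)-mK_X$, whose pullback to $Y$ satisfies
\[
\mathrm{div}_Y(f)-m\phi^*K_X \;=\; \mathrm{div}_Y(f)-mK_Y-m\Delta \;\ge\; 0;
\]
since $\Delta\ge 0$, we conclude $\mathrm{div}_Y(f)\ge mK_Y$, i.e.\ $f\in H^0(Y,-mK_Y)$. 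For arbitrary $m$, apply this argument to $f^r\in H^0(X,-rmK_X)$, where $r$ is the Cartier index of $K_X$, to obtain $r\,\mathrm{div}_Y(f)\ge rmK_Y$ and hence $f\in H^0(Y,-mK_Y)$.

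Now pick a very general smooth point $x\in X$ where $\epsilon_m(-K_X,x)=\epsilon_m(-K_X)$. Since $\phi$ is an isomorphism away from a codimension $\ge 2$ subset of $X$, the preimage $y:=\phi^{-1}(x)\in Y$ is a smooth point of $Y$ and there is a canonical isomorphism of local rings $\cO_{X,x}\cong\cO_{Y,y}$ identifying $\fm_x$ with $\fm_y$. Under this identification, together with the inclusion $H^0(X,-mK_X)\subseteq H^0(Y,-mK_Y)$ from the previous paragraph, the jet-evaluation at $x$ factors through that at $y$, so $s(-mK_X,x)\le s(-mK_Y,y)$ for every $m$. Passing to the $\limsup$ as $m\to\infty$ yields $\epsilon_m(-K_X)=\epsilon_m(-K_X,x)\le\epsilon_m(-K_Y,y)\le\epsilon_m(-K_Y)$.

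The main technical point is the inclusion $H^0(X,-mK_X)\subseteq H^0(Y,-mK_Y)$, which encodes that a terminal modification only adds an effective exceptional divisor to the anticanonical class; this is precisely where the defining properties of a terminal modification are essential, via the negativity lemma (in Lemma \ref{lem:discrep}) delivering $\Delta\ge 0$. The remainder is a routine transfer of local jet data at a point where $\phi$ is an isomorphism.
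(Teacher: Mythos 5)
Your easy direction and your overall strategy (establish $H^0(X,-mK_X)\hookrightarrow H^0(Y,-mK_Y)$ inside the common function field, then transfer jet data at a very general point where $\phi$ is an isomorphism) are exactly the paper's. However, there is a genuine gap in the reverse direction: you invoke Lemma \ref{lem:discrep} with $D=0$ and write $K_Y+\Delta\sim_\bQ\phi^*K_X$. This requires $(X,0)$ to be a pair in the sense of \S\ref{sec:prelim}, i.e.\ $K_X$ must be $\bQ$-Cartier, which is not part of the hypotheses. Indeed, the whole point of Lemma \ref{lem:mmodel} and its applications (e.g.\ in the proof of Theorem \ref{main:birbdd}, where $X$ is an arbitrary normal projective variety with $\epsilon_m(-K_X)>n-1$) is precisely to handle non-$\bQ$-Gorenstein $X$; if $K_X$ were already $\bQ$-Cartier the reduction step would be far less interesting. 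Without $K_X$ being $\bQ$-Cartier, $\phi^*K_X$ has no meaning and your displayed identity $\mathrm{div}_Y(f)-m\phi^*K_X=\mathrm{div}_Y(f)-mK_Y-m\Delta\ge 0$ does not parse.

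The fix is the one the paper uses: instead of pulling back $K_X$ alone, pull back the principal (hence Cartier) divisor $mK_X+D_X\sim 0$, where $D_X=\mathrm{div}_X(f)-mK_X\ge 0$ for a given section $f\in H^0(X,-mK_X)$. Now $K_X+\frac{1}{m}D_X\sim_\bQ 0$ is $\bQ$-Cartier, so $(X,\frac{1}{m}D_X)$ is an honest pair and Lemma \ref{lem:discrep} applies to it; writing $\phi^*(mK_X+D_X)=mK_Y+D_Y+E_Y$ with $D_Y$ the strict transform, the lemma gives $E_Y\ge 0$ and hence $\mathrm{div}_Y(f)-mK_Y=D_Y+E_Y\ge 0$, which is the inclusion you wanted. (When $\epsilon_m(-K_X)=0$ the statement is immediate from your easy direction, so one may assume $-K_X$ is big and $|-mK_X|\ne\emptyset$ for large divisible $m$.) With this correction your argument coincides with the paper's.
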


\begin{proof}
Let $D_X\in|-mK_X|$ and $D_Y$ its strict transform on $Y$. We may write $mK_Y+D_Y+E_Y\sim\phi^*(mK_X+D_X)\sim 0$ for some $\phi$-exceptional divisor $E_Y$. Note that $E_Y$ has integral coefficients. Apply Lemma \ref{lem:discrep} to the pair $(X,\frac{1}{m}D_X)$ we see that $E_Y$ is effective. It follows that $D_Y+E_Y\in|-mK_Y|$ and we have an injection $\phi^{-1}_*:H^0(X,-mK_X)\rightarrow H^0(Y,-mK_Y)$. On the other hand $\phi_*$ also induces an inclusion $H^0(Y,-mK_Y)\rightarrow H^0(X,-mK_X)$ and $\phi_*\circ\phi^{-1}_*=\mathrm{id}$, hence it's an isomorphism and the lemma follows.
\end{proof}

To give upper bounds of moving Seshadri constant we will usually use the following observation.

\begin{lem} \label{lem:sbound}
Let $L$ be a $\bQ$-Cartier divisor on $X$ and $W\subseteq|L|$ a sub linear system. Let $x$ be a smooth point on $X$ and $C$ an irreducible curve containing $x$. Assume that $x$ is not a base point of $W$. Then
\[s(W,x)\le \frac{(L\cdot C)}{\mult_x C}\]
and the inequality is strict if $C$ intersects the base locus of $W$.
\end{lem}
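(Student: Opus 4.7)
The plan is to produce $D\in W$ with $\mult_xD\ge s:=s(W,x)$ and $C\not\subseteq D$; then the standard local intersection inequality at a smooth point yields
\[(L\cdot C)=(D\cdot C)\ge (D\cdot C)_x\ge \mult_xD\cdot\mult_xC\ge s\cdot\mult_xC.\]

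For the construction, let $V\subseteq H^0(L)$ be the space of sections corresponding to elements of $W$ that vanish to order $\ge s$ at $x$. The definition of $s(W,x)=s$ is equivalent to surjectivity of $W\to H^0(L\otimes\cO_X/\fm_x^{s+1})$, which in turn implies that the symbol map
\[V\;\longrightarrow\;L_x\otimes \fm_x^s/\fm_x^{s+1}\;\cong\;\mathrm{Sym}^s(T_x^*X)\]
is surjective (look at graded pieces of the $\fm_x$-adic filtration). Since $x$ is a smooth point of $X$ and $C$ is a curve through $x$, the tangent cone $\mathrm{TC}_x(C)\subseteq T_xX$ is a nonempty one-dimensional cone; pick $0\ne v\in \mathrm{TC}_x(C)$ and a linear form $\ell\in T_x^*X$ with $\ell(v)\ne 0$. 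By surjectivity, choose $\sigma\in V$ whose symbol is $\ell^s$. The corresponding divisor $D=\dv(\sigma)$ then has $\mult_xD=s$ and tangent cone $\{\ell^s=0\}$ at $x$, which does not contain $v$; in particular, $D\not\supseteq C$, since $C\subseteq D$ would force $\mathrm{TC}_x(C)\subseteq \mathrm{TC}_xD$.

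For the strict inequality, suppose $y\in C\cap\mathrm{Bs}(W)$. Since $x\notin \mathrm{Bs}(W)$ we have $y\ne x$, and the divisor $D$ constructed above passes through $y$ (as every element of $W$ does). Because $C\not\subseteq D$, the local intersection $(D\cdot C)_y$ is at least $1$, so the bound improves to $(L\cdot C)\ge s\cdot\mult_xC+1>s\cdot\mult_xC$.

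The only nontrivial ingredient is the construction of $D$: picking a linear form not vanishing on some tangent vector of $C$ is immediate from $\dim C\ge 1$, and realizing $\ell^s$ as a symbol is exactly the defining property of $s(W,x)$. Everything else is routine intersection-multiplicity bookkeeping.
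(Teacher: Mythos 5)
Your proof is correct and takes essentially the same route as the paper's: choose $D\in W$ with $\mult_x D=s$ and $C\not\subseteq D$, then bound $(L\cdot C)=(D\cdot C)$ by the local intersection multiplicity at $x$ (and at a base point for the strict case). The paper simply asserts that such a $D$ exists; you supply the tangent-cone/symbol-map argument showing how to choose one, which is a useful elaboration but not a different strategy.
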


\begin{proof}
Since $x\not\in\mathrm{Bs}(W)$, we have $s=s(W,x)\ge 0$. As $W$ generates $s$-jets at $x$, we may choose $D\in W\subseteq |L|$ such that $C\not\subseteq D$ and $\mult_x D=s$. It then follows that
\[(L\cdot C)\ge\mult_x C\cdot \mult_x D=s\cdot \mult_x C\]
and the inequality is strict if $C$ intersects $D$ at points other than $x$. In particular, this happens if $C$ intersects the base locus of $W$.
\end{proof}

We now rephrase Theorem \ref{thm:volbdd} and \ref{thm:birbdd} using moving Seshadri constant. Theorem \ref{thm:volbdd} and \ref{thm:birbdd} will then follow immediately as special cases of these more general versions.

\begin{thm} \label{main:volbdd}
Let $\epsilon>0$, then there exists a number $M(n,\epsilon)>0$ depending only on $n$ and $\epsilon$ with the following property: if $X$ is a normal projective variety of dimension $n$ such that $\epsilon_m(-K_X)>n-1+\epsilon$, then $\vol(-K_{X})\le M(n,\epsilon)$.
\end{thm}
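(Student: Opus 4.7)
The plan is to argue by contradiction via the Koll\'ar--Shokurov connectedness lemma. Assume $V := \vol(-K_X)$ can be taken arbitrarily large while $\epsilon_m(-K_X) > n-1+\epsilon$. I will construct an effective $\bQ$-divisor $D \sim_\bQ -\lambda K_X$ with $\lambda < 1$ whose non-klt locus has two disjoint isolated components at two very general smooth points $x, y \in X$. Since $-K_X$ is big (from $\sqrt[n]{V} \ge \epsilon_m(-K_X) > 0$), the divisor $-(K_X + D) \sim_\bQ -(1-\lambda) K_X$ is also big, and a standard perturbation makes it ample: for any $\eta > 0$ write $-K_X \sim_\bQ A + E$ with $A$ ample and $E$ effective of small coefficients (possible by Kodaira's lemma plus scaling), and replace $D$ by $D + (1-\lambda) E$; then $-(K_X + D + (1-\lambda)E) \sim_\bQ (1-\lambda)A$ is ample, while the isolated non-klt components at $x, y$ persist (since the perturbation is small). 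Lemma~\ref{lem:connectedness} then produces the required contradiction.

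Constructing the isolated non-klt center at a very general smooth $x$ combines two mechanisms. First, a Riemann--Roch/jet-counting computation (comparing $h^0(-mK_X) \sim Vm^n/n!$ with the jet-space dimension $\binom{nm+n-1}{n}$) yields an effective $\bQ$-divisor $A_0 \sim_\bQ -\alpha_0 K_X$ with $\mult_x A_0 \ge n$ and $\alpha_0 \le n/V^{1/n} + o(1)$, so that $(X, A_0)$ is non-klt at $x$. Let $Z \ni x$ be a minimal non-klt center, of dimension $k \le n-1$; by Kawamata subadjunction $Z$ is normal, and by Lemma~\ref{lem:restrict_m} the restriction satisfies $\epsilon_m(-K_X|_Z, x) \ge \epsilon_m(-K_X, x) > k + \epsilon$. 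Repeating the Riemann--Roch argument on $Z$ produces an effective $\bQ$-divisor $F_Z \sim_\bQ -\beta K_X|_Z$ with $\mult_x F_Z \ge k$ and $\beta < k/(k+\epsilon) < 1$. An Angehrn--Siu/Nadel-vanishing type extension argument (using the bigness of $-K_X$) lifts $F_Z$ to an effective $\bQ$-divisor on $X$ of comparable coefficient, and tie-breaking with this lifted divisor strictly decreases the dimension of the minimal non-klt center through $x$. Iterating in at most $n-1$ steps produces $B_x \sim_\bQ -\lambda_x K_X$ with isolated non-klt locus at $x$. Running the same construction at $y$ gives $B_y$, and $D := B_x + B_y$ provides the required divisor.

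The main obstacle is the quantitative control of the total coefficient $\lambda = \lambda_x + \lambda_y$, which must remain below $1$ as $V \to \infty$. Each tie-breaking step contributes only a coefficient bounded by $k/(k+\epsilon) < 1$, and naively summing over $n-1$ iterations gives a bound that may far exceed $1$. The resolution is to exploit $V$ large throughout the iteration: subadjunction together with Lemma~\ref{lem:restrict_m} propagates a lower bound on the restricted volumes $\vol(-K_X|_{Z^{(j)}})$ growing with $V$, so that each iteration's initial ``volume'' contribution $\alpha_0^{(j)}$ decays, and only a bounded number of ``Seshadri'' contributions $\beta^{(j)}$ accumulate to a quantity depending only on $n, \epsilon$. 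Making this precise, including the careful bookkeeping of discrepancies along successive log resolutions and the extension-theorem step lifting divisors from the non-klt centers to $X$ while preserving the multiplicity at $x$, is the technical heart of the argument and is where I expect the real work to lie.
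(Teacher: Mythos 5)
Your overall framework is correct and matches the paper: argue by contradiction via the Koll\'ar--Shokurov connectedness lemma (Lemma~\ref{lem:connectedness}) by constructing a divisor $\Delta\sim_\bQ -\lambda K_X$ with $\lambda<1$ whose non-klt locus is disconnected. However, the mechanism you propose for producing the isolated non-klt center at $x$ — an Angehrn--Siu/Helmke-type iteration cutting down minimal non-klt centers, using Kawamata subadjunction, Nadel-vanishing lifts, and tie-breaking — is not what the paper does, and it has a gap that I do not think your sketch resolves.

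The gap is precisely the total coefficient control, which you identify as ``the technical heart.'' You claim it is resolved because ``subadjunction together with Lemma~\ref{lem:restrict_m} propagates a lower bound on the restricted volumes $\vol(-K_X|_{Z^{(j)}})$ growing with $V$.'' This is not true in general: Lemma~\ref{lem:restrict_m} and subadjunction give $\vol_{X|Z^{(j)}}(-K_X)\ge \epsilon_m(-K_X|_{Z^{(j)}})^{k_j}\ge (n-1+\epsilon)^{k_j}$, a bound depending only on $n$ and $\epsilon$ but \emph{not growing with} $V$. Non-klt centers are not generic complete intersections, so there is no reason for their restricted volumes to grow with the global volume. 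Consequently, only the initial volume contribution $\alpha_0\sim n/V^{1/n}$ decays; each tie-breaking coefficient $\beta^{(j)}$ is only bounded by $k_j/(k_j+\epsilon)<1$, and already the first such term for $k_j=n-1$ is $\frac{n-1}{n-1+\epsilon}$, which is arbitrarily close to $1$ as $\epsilon\to 0$. Iterating up to $n-1$ times produces a total coefficient that can far exceed $1$ and cannot be repaired by a limiting argument. This is why the classical Angehrn--Siu/Helmke bounds involve constants like $O(n^2)$ rather than $O(1)$, and why that route does not directly yield the statement.

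The paper sidesteps the iteration entirely with a much more direct construction, and it is worth seeing the trick since you almost have it. The hypothesis $\epsilon_m(-K_X,x)>n-1+\epsilon$ means $|-mK_X|$ generates $m(n-1+\epsilon)$-jets at $x$ for $m\gg 0$; in particular one can choose a \emph{single} divisor $H\in|-mK_X|$ whose singularity at $x$ is \emph{isolated} of multiplicity exactly $m(n-1+\epsilon)$ (pick the jet at $x$ to be generic). Then $D_1=\frac{a}{m}H$ has $\mult_x D_1\ge n-1+\frac{\epsilon}{2}$ while $\mult_p D_1\le \frac{a}{m}\ll 1$ for $p$ near $x$, $p\ne x$. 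Adding a small-coefficient divisor $D_2\sim_\bQ -bK_X$ with $\mult_x D_2=1-\frac{\epsilon}{2}$ (possible once $\vol(-K_X)>b^{-n}(1-\frac{\epsilon}{2})^n$) and $\mult_p D_2\le 1-\frac{\epsilon}{2}$ nearby pushes $\mult_x$ past $n$ while keeping $\mult_p<1$ in a punctured neighborhood of $x$, so by Lemma~\ref{lem:klt} the pair is terminal near $x$ except at $x$ itself, i.e.\ $x$ is an isolated non-klt point. For $y$ you need no isolation at all — a single volume-produced $D_3\sim_\bQ -cK_X$ with $\mult_y D_3\ge n$ suffices, since the isolated component at $x$ already disconnects the non-klt locus. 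The budget is $a+b+c<1$ with $a$ close to $\frac{n-1+\epsilon/2}{n-1+\epsilon}<1$ and $b,c$ fixed small constants; everything is a single shot, with no iteration and no subadjunction.
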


\begin{thm} \label{main:birbdd}
Let $X$ be a normal projective variety of dimension $n$ such that $\epsilon_m(-K_X)> n-1$, then  $X$ is birational to a terminal Fano variety $Y$ with $\epsilon(-K_Y)\ge\epsilon_m(-K_X)$.
\end{thm}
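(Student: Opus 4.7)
The plan is to reduce to a terminal model, run the MMP, and then analyze the two possible outcomes. First I would take a terminal modification $\phi\colon \tilde X\to X$, which exists by \cite{bchm}. By Lemma~\ref{lem:mmodel}, $\epsilon_m(-K_{\tilde X})=\epsilon_m(-K_X)>n-1$, so in particular $-K_{\tilde X}$ is big and $K_{\tilde X}$ is not pseudo-effective. Theorem~\ref{thm:mmp} then lets me run a $K_{\tilde X}$-MMP to produce a birational contraction $\tilde X\dashrightarrow Y$ ending with a Mori fiber space $g\colon Y\to Z$, where $Y$ inherits $\bQ$-factorial terminal singularities. By Corollary~\ref{cor:nondecrease}, $\epsilon_m(-K_Y)\ge \epsilon_m(-K_{\tilde X})=\epsilon_m(-K_X)$.

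If $\dim Z=0$ then $Y$ itself is a terminal Fano satisfying $\epsilon(-K_Y)=\epsilon_m(-K_Y)\ge\epsilon_m(-K_X)$, and we are done. Otherwise, let $F$ be a general fiber of $g$: then $F$ is a smooth Fano with $\dim F\le n-1$. By Lemma~\ref{lem:restrict_m}, $\epsilon_m(-K_F)\ge \epsilon_m(-K_Y)>n-1\ge \dim F$, so Theorem~\ref{main:P^n} forces $F\cong \bP^{\dim F}$ and hence $\epsilon_m(-K_F)=\epsilon(-K_F)=\dim F+1$. Combining $\epsilon_m(-K_Y)\le\epsilon_m(-K_F)=\dim F+1$ with $\epsilon_m(-K_Y)>n-1$ yields $\dim F=n-1$, so $F\cong\bP^{n-1}$ and $Z$ is a smooth projective curve.

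The main obstacle is then to show $Z\cong\bP^1$. My approach is as follows: since the general fiber is $\bP^{n-1}$, Tsen's theorem over the function field $\bC(Z)$ makes the generic fiber of $g$ a trivial Brauer--Severi variety, so $Y$ is birational over $Z$ to a projective bundle $\bP(\mathcal E)$ for some rank-$n$ vector bundle $\mathcal E$ on $Z$. Using the Harder--Narasimhan filtration of $\mathcal E$, I would construct a minimum-slope quotient $\mathcal E\twoheadrightarrow\mathcal L$, whose corresponding section $\sigma\colon Z\to\bP(\mathcal E)$ gives a curve $C$ with $\mult_y C=1$ at a general point; a relative Euler-sequence computation (using $n\deg\mathcal L\le\deg\det\mathcal E$ by minimality of slope) bounds $(-K_Y\cdot C)\le 2-2g(Z)$. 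After arranging by deformation that such a $C$ passes through a very general point $y\in Y$, Lemma~\ref{lem:sbound} gives $\epsilon_m(-K_Y,y)\le 2-2g(Z)$, and if $g(Z)\ge 1$ this contradicts $\epsilon_m(-K_Y)>n-1\ge 1$. Hence $g(Z)=0$, $Z\cong\bP^1$, and Tsen again makes $Y$ (and hence $X$) birational to $\bP^{n-1}\times\bP^1$, and therefore to $\bP^n$. Then $Y'=\bP^n$ is the desired terminal Fano, since $\epsilon(-K_{\bP^n})=n+1\ge\epsilon_m(-K_X)$ (trivially if $\epsilon_m(-K_X)\le n$, while if $\epsilon_m(-K_X)>n$ then already $X\cong\bP^n$ by Theorem~\ref{main:P^n}). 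The technical heart is the step above: ensuring the Harder--Narasimhan section, constructed on the birational model $\bP(\mathcal E)$, transports back across the codimension-two indeterminacy to a curve in $Y$ through a very general point with the claimed multiplicity and intersection number.
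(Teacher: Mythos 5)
Your reduction to a terminal model, the run of the $K_{\tilde X}$-MMP, and the identification of the general fiber $F\cong\bP^{n-1}$ all match the paper's strategy (the paper packages the fiber identification inside Lemma~\ref{lem:f&b}, but your dimension-count via $\epsilon_m(-K_Y)\le\epsilon_m(-K_F)=\dim F+1$ is a valid alternative). The divergence, and the problem, is in how you rule out $g(Z)\ge 1$.

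The paper's Lemma~\ref{lem:f&b} gets $Z\cong\bP^1$ by applying Corollary~\ref{cor:kltxd} to produce a divisor $D\sim_\bQ -K_{X}$ that is klt along $F$, then feeding $K_X+D_\lambda\sim_\bQ f^*(-a\lambda P)$ into the canonical bundle formula; pseudo-effectivity of the moduli and boundary parts forces $\deg K_Z<0$. Your Tsen/Harder--Narasimhan substitute has a genuine gap that you yourself flag and that I do not think closes easily. First, the section $C\subset\bP(\mathcal E)$ of minimum degree is typically \emph{rigid}: when $\deg\mathcal L$ is strictly smaller than the slope of the next HN piece, $h^0$ of the corresponding normal bundle direction vanishes and $C$ does not deform. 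So ``arranging by deformation that $C$ passes through a very general point $y$'' is not available, and Lemma~\ref{lem:sbound} cannot be applied at a very general point. Second, even if $C$ were in place, the birational map $Y\dashrightarrow\bP(\mathcal E)$ over $Z$ is only an isomorphism over a dense open subset of $Z$; carrying $C$ back to $Y$ through fibers where the map is not defined can change $(-K\cdot C)$ by exceptional contributions, so the inequality $(-K_Y\cdot C)\le 2-2g(Z)$ is not automatic on $Y$. Third, the bound $n\deg\mathcal L\le\deg\det\mathcal E$ is only guaranteed when the last HN graded piece has rank one; if it has higher rank, a line bundle quotient of it has degree \emph{at least} its slope, which does not by itself give the needed upper bound. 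Any of these would need real work. By contrast, the paper's canonical-bundle-formula argument stays on $Y$ itself, uses only the klt divisor furnished by the Seshadri hypothesis, and avoids all three issues; I would replace your HN step with that argument (or cite Lemma~\ref{lem:f&b} directly, taking $\Gamma=\delta(-K_Y)$ for $0<\delta\ll 1$, which is $g$-ample and keeps $\epsilon_m(-K_Y-\Gamma)\ge n-1$ since the inequality $\epsilon_m(-K_Y)>n-1$ is strict).
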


We will prove these statements in subsequent sections.

\section{Weak boundedness} \label{sec:volbdd}

We start with the proof of Theorem \ref{main:volbdd}.

\begin{proof}[Proof of Theorem \ref{main:volbdd}]
We may assume $\epsilon\in\mathbb{Q}$. Since $\epsilon_m(-K_X)>0$, $-K_X$ is big, hence we may write $-K_X\sim_\bQ A+E$ where $A$ is ample and $E$ is effective. Choose $a,b,c>0$ such that $(n-1+\epsilon)a\ge n-1+\frac{\epsilon}{2}$
and $a+b+c<1$. Suppose $\vol(-K_X)>\max\{b^{-n}(1-\frac{\epsilon}{2})^{n},c^{-n}n^{n}\}$.
Let $y$ be a smooth point of $X$. By \cite[Lemma 10.4.11]{laz2}, there exists $D_{3}\sim_{\mathbb{Q}}-cK_X$
such that $\mathrm{mult}_{y}D_{3}\ge n$. Let $x$ be very general
point of $X$ (so $x$ is a smooth point and is not contained in the
support of $E$ and $D_{3}$). By \cite[Lemma 10.4.11]{laz2} again there exists $D_{2}\sim_{\mathbb{Q}}-bK_X$
such that $\mathrm{mult}_{x}D_{2}=1-\frac{\epsilon}{2}$. By the upper
semi-continuity of multiplicities, we have $\mathrm{mult}_{p}(D_{2})\le1-\frac{\epsilon}{2}$
when $p$ varies in a Zariski neighbourhood of $x$. Since $\epsilon_m(-K_X,x)>n-1+\epsilon$,
the linear system $|-mK_X|$ generates $m(n-1+\epsilon)$-jets at $x$ for $m\gg0$. Hence we can choose $H\in |-mK_X|$ ($m\gg0$) such that $H$ has an isolated singularity of multiplicity $m(n-1+\epsilon)$ at $x$. Let $D_{1}=\frac{a}{m}H$, then we have
$\mathrm{mult}_{x}D_{1}\ge n-1+\frac{\epsilon}{2}$ and $\mathrm{mult}_{p}D_{1}\le\frac{a}{m}\ll 1$
for all smooth point $p$ in a punctured neighbourhood of $x$. Let $\Delta=D_{1}+D_{2}+D_{3}+(1-a-b-c)E$. By
construction we have $\mathrm{mult}_{x}\Delta\ge n$, $\mathrm{mult}_{y}\Delta\ge n$
and $\mathrm{mult}_{p}\Delta<1$ when $p$ lies in a punctured neighbourhood
of $x$. It follows that $x,y\in\mathrm{Nklt}(X,\Delta)$ and by Lemma \ref{lem:klt}, $x$ is an isolated point in $\mathrm{Nklt}(X,\Delta)$. In particular, $\mathrm{Nklt}(X,\Delta)$ is not connected. But as $-(K_{X}+\Delta)\sim_{\mathbb{Q}}(1-a-b-c)A$
is ample, this contradicts Lemma \ref{lem:connectedness}. Hence we must have $\vol(-K_X)\le\max\{b^{-n}(1-\frac{\epsilon}{2})^{n},c^{-n}n^{n}\}$.
\end{proof}

It is not hard to see from the proof that we can take $M(n,\epsilon)=O\left(\frac{n^{2n}}{\epsilon^n}\right)$. We also note the following consequence of the same proof method.

%The same proof method also yields part of Theorem \ref{main:ft}, which is enough for proving Corollary \ref{cor:P^n}. We remark that the complete proof of Theorem \ref{main:ft} is not independent of Corollary \ref{cor:P^n}.

\begin{cor} \label{cor:ft}
Let $X$ be a normal projective variety of dimension $n$ such that
$\epsilon_{m}(-K_{X})>n$ or $\epsilon_{m}(-K_{X})=n$ and $\mathrm{Vol}(-K_{X})>n^{n}$,
then $X$ is of Fano type.
\end{cor}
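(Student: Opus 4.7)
The plan is to adapt the construction in the proof of Theorem \ref{main:volbdd} to produce a klt log Fano structure on $X$, rather than a contradiction. Writing $-K_X \sim_\bQ A + E$ with $A$ ample and $E$ effective, I would choose parameters $a, b \ge 0$ with $a + b < 1$ and follow the proof of Theorem \ref{main:volbdd} to build $D_1 \sim_\bQ -aK_X$ of the form $(a/m)H$ with $H \in |-mK_X|$ having an isolated singularity at a very general smooth point $x$ of multiplicity close to $m\,\epsilon_m(-K_X)$; and, when $\epsilon_m(-K_X) = n$ and $\vol(-K_X) > n^n$, also $D_2 \sim_\bQ -bK_X$ of prescribed multiplicity at $x$ via Lemma 10.4.11 of \cite{laz2} and the volume hypothesis. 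The strict inequality in the hypothesis (either $\epsilon_m(-K_X) > n$ or $\vol(-K_X) > n^n$) will give enough slack to rescale via a parameter $\alpha \in (0, 1]$ so that the divisor $\Delta = \alpha(D_1 + D_2) + (1 - \alpha(a+b))E$ satisfies $\mathrm{mult}_x \Delta = n$ exactly, $\mathrm{mult}_p \Delta < 1$ for $p$ in a punctured neighborhood of $x$, and $-(K_X + \Delta) \sim_\bQ (1-\alpha(a+b))A$ ample.

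By Lemma \ref{lem:klt}, the pair $(X, \Delta)$ is terminal in the punctured neighborhood of $x$, so $\{x\}$ is an isolated component of $\mathrm{Nklt}(X, \Delta)$ near $x$. By the connectedness lemma (Lemma \ref{lem:connectedness}) combined with ampleness of $-(K_X + \Delta)$, the set $\mathrm{Nklt}(X, \Delta)$ is connected; hence it equals $\{x\}$, and $(X, \Delta)$ is klt on $X \setminus \{x\}$.

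To upgrade this to a klt log Fano structure, I would use that $\mathrm{mult}_x \Delta = n$ at a smooth point makes the exceptional divisor of the blowup of $x$ a log canonical place for $(X, \Delta)$. By taking the divisors $D_1$ (and $D_2$ when present) generic at the very general point $x$, this can be arranged to be the only divisorial lc place (the tangent cones at $x$ are generic hypersurfaces, so all weighted-blowup log discrepancies are nonnegative, with equality only for the standard blowup), so $(X, \Delta)$ is lc at $x$ with $\mathrm{lct}(X, \Delta) = 1$. For $c < 1$ sufficiently close to $1$, the pair $(X, c\Delta)$ is then klt, and since $-(K_X + c\Delta) = -(K_X + \Delta) + (1-c)\Delta$ converges numerically to the ample class $-(K_X + \Delta)$ as $c \to 1$, openness of the ample cone gives $-(K_X + c\Delta)$ ample. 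Thus $(X, c\Delta)$ is klt log Fano, proving $X$ is of Fano type.

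The main obstacle will be verifying $\mathrm{lct}(X, \Delta) = 1$, i.e., that no divisorial valuation besides the blowup of $x$ yields a smaller log canonical threshold; this should follow from the genericity of the construction combined with a standard analysis of weighted-blowup discrepancies at a smooth point.
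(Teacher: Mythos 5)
Your plan shares the core mechanism with the paper's proof: construct an effective $\Delta$ with an isolated non-klt center at a very general smooth point $x$ so that $-(K_X+\Delta)$ is ample, apply the Koll\'ar--Shokurov connectedness lemma to get $\mathrm{Nklt}(X,\Delta)=\{x\}$, then perturb to a klt log Fano boundary. But the way you propose to guarantee $(X,\Delta)$ is \emph{log canonical} (and not worse) at $x$ has a genuine gap, one you flag yourself. Arranging $\mathrm{mult}_x\Delta=n$ at a smooth point only yields $a(E_x;X,\Delta)=-1$ for the exceptional divisor $E_x$ of the blowup at $x$; it does not force $\lct_x(X,\Delta)=1$. (For instance $\Delta=nH$ for a hyperplane $H$ through $x$ has $\mathrm{mult}_x\Delta=n$ but $\lct_x=1/n$ for $n\ge 2$.) If $\lct_x(X,\Delta)<1$, your last step fails: $(X,c\Delta)$ is not klt near $x$ for $c$ close to $1$. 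Your proposed fix --- that $D_1,D_2$ can be taken generic so the tangent cone of $\Delta$ at $x$ is a generic degree-$n$ hypersurface --- is not supplied by the construction: $D_1$ is a rescaling of a member $H\in|-mK_X|$ with an isolated singularity of prescribed multiplicity, and the jet-separation argument controls only the multiplicity of $H$ at $x$, not its tangent cone. Making this genericity rigorous would be a substantial extra argument, and it is the crux of the proof.

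The paper sidesteps the problem by an overshoot-and-rescale trick. Under either hypothesis one constructs $D\sim_\bQ -K_X$ with $\mathrm{mult}_x D>n$ \emph{strictly} while $\mathrm{mult}_pD\ll 1$ for $p$ in a punctured neighborhood of $x$; in the boundary case $\epsilon_m(-K_X)=n$, $\mathrm{Vol}(-K_X)>n^n$, the divisor $D=\tfrac{n}{n+1}D_1+\tfrac{1}{n+1}D_2$ is a convex combination of a divisor furnished by the Seshadri hypothesis and one furnished by the volume hypothesis. Then $a:=\lct_x(X,D)<1$ because $\mathrm{mult}_xD>n$, and $(X,aD)$ is log canonical but not klt at $x$ \emph{by definition of the log canonical threshold}, with no control of the tangent cone required. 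Setting $\Delta=aD+(1-a)E$ keeps $-(K_X+\Delta)\sim_\bQ(1-a)A$ ample, since both $D$ and $E$ differ from $-K_X$ by a multiple of $A$; note that your alternative rescaling $c\Delta$ would destroy this proportionality. After the connectedness step, the paper perturbs to $\Gamma=(a-t)D+(1-a+t)E$ for small $t>0$, which remains anticanonically a positive multiple of $A$ and, since $a-t<\lct_x(X,D)$, is klt at $x$. Replacing your ``hit $n$ exactly plus genericity'' step with this ``overshoot then scale by the lct'' step closes the gap, and the rest of your outline goes through.
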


\begin{proof}
Let $x$ be a very general point of $X$ and write $-K_X\sim_\bQ A+E$ as before. If $\epsilon_m(-K_X)>n$ then we can choose $D\sim_\bQ -K_X$ such that $\mult_x D > n$ while $\mult_p D \ll 1$ when $p$ lies in a punctured neighbourhood of $x$. Let $a=\lct_x(X,D)<1$, $\Delta=aD+(1-a)E$, then $(X,\Delta)$ is lc and has an isolated non-klt center at $x$. Since $-(K_X+\Delta)\sim_\bQ (1-a)A$ is ample, by Lemma \ref{lem:connectedness} we have $\mathrm{Nklt}(X,\Delta)=\{x\}$. Let $\Gamma=(a-t)D+(1-a+t)E$ where $0<t\ll 1$, then $(X,\Gamma)$ is log Fano.

Similarly, if $\epsilon_m(-K_X)=n$ then for any $t>0$ we can choose $D_1\sim_\bQ -K_X$ such that $\mult_x D_1 > n-t$ while $\mult_p D \ll 1$ when $p$ lies in a punctured neighbourhood of $x$. Since $\mathrm{Vol}(-K_{X})>n^{n}$, we can also choose $D_2\sim_\bQ -K_X$ such that
$\mult_x D_2 = n(1+t)$ for sufficiently small $t$. Let $D=\frac{n}{n+1}D_1+\frac{1}{n+1}D_2$, then $D\sim_\bQ -K_X$, $\mult_x D > n$ and $(X,D)$ is klt in a punctured neighbourhood of $x$. The proof now proceeds as in the previous case.
\end{proof}

We expect the conclusion of the corollary to hold under the weaker assumption that  $\epsilon_m(-K_X)=n$. Nevertheless, the above version is enough for proving Theorem \ref{main:P^n}.

\begin{proof}[Proof of Theorem \ref{main:P^n}]
By Corollary \ref{cor:ft}, $X$ is of Fano type. In particular, there exists a divisor $\Delta$ such that $(X,\Delta)$ is klt, hence there exists a small $\bQ$-factorial modification $Y$ of $X$. It suffices to show that $Y\cong\bP^n$. We may thus replace $X$ by $Y$ and assume that $X$ is $\bQ$-factorial.

Since $X$ is of Fano type, we can run the $(-K_X)$-MMP $f:X\dashrightarrow Y$ and terminates with a $\bQ$-Fano variety $Y$. By Corollary \ref{cor:nondecrease}, $\epsilon(-K_Y)=\epsilon_m(-K_Y)\ge\epsilon_m(-K_X)>n$, hence by Theorem \ref{thm:P^n-weak}, $Y\cong\bP^n$. As $f$ comes from a $(-K_X)$-MMP, we have $K_X+D=f^*K_Y$ for some effective divisor $D$ supported in the $f$-exceptional locus. Since $Y$ is smooth and in particular terminal, this is impossible unless $f$ is small, but then $X\cong\bP^n$ as well.
\end{proof}

Another consequence of Theorem \ref{main:volbdd} is the birational boundedness of varieties $X$ with $\epsilon_m(-K_X)>n-1+\epsilon$. While this is also implied by Theorem \ref{main:birbdd}, the following proof is shorter and does not use the solution of BAB conjecture.

\begin{cor}
Let $\epsilon>0$, then the set of varieties $X$ with $\epsilon_m(-K_X)>n-1+\epsilon$ is birationally bounded.
\end{cor}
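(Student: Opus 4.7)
The plan is to apply Theorem \ref{main:volbdd} to obtain a uniform volume bound $\vol(-K_X) \le M(n,\epsilon)$, and combine it with the moving Seshadri lower bound to exhibit, for each $X$ in the set, a birational map onto a subvariety of uniformly bounded degree in a projective space of uniformly bounded dimension. The conclusion then follows from the standard fact that subvarieties of bounded degree in a fixed projective space form a bounded family (the Chow variety argument). The gain over deriving the corollary from Theorem \ref{main:birbdd} is that no reduction to a terminal Fano is required, and hence no appeal to BAB.

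Since $\epsilon_m(-K_X) > n-1+\epsilon > 0$, the divisor $-K_X$ is big for every $X$ in the set. At a very general point $x \in X$, the integers $s(-mK_X,x)$ are super-additive in $m$, so by Fekete's lemma $s(-mK_X,x)/m$ converges monotonically upward to $\epsilon_m(-K_X,x)$. Hence for $m$ sufficiently large, $|-mK_X|$ separates at least $2$-jets at $x$, so the rational map $\phi_m \colon X \dashrightarrow Y_m \subseteq \bP^{N_m}$ defined by $|-mK_X|$ is birational onto its image, and $\deg Y_m \le m^n \vol(-K_X) \le m^n M$ by the volume bound.

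The main obstacle is to make the choice of $m$, and hence $N_m = h^0(-mK_X) - 1$, uniform in $n$ and $\epsilon$. Uniformity of $m$ requires controlling the rate at which $s(-mK_X,x)/m$ approaches $\epsilon_m(-K_X,x)$; this is where the volume bound from Theorem \ref{main:volbdd} enters, via asymptotic Riemann--Roch type estimates that yield a universal $m_0 = m_0(n,\epsilon)$ beyond which jet-separation holds for every $X$ in the set. Once $m_0$ is fixed, a uniform upper bound on $N_{m_0} = h^0(-m_0 K_X) - 1$ follows from standard bounds on sections of big divisors in terms of the volume. With both uniformities in place, the images $Y_{m_0}$ lie in a bounded family of subvarieties of a fixed projective space, proving that the set of $X$ in question is birationally bounded.
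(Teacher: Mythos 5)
The plan is sound in outline and does end up invoking the same inputs as the paper (the volume bound from Theorem \ref{main:volbdd} and a birational-boundedness criterion once a uniform birational multiple is found), but there is a genuine gap at the step you flag as ``the main obstacle'': obtaining a uniform $m_0(n,\epsilon)$ for which $|-m_0K_X|$ is birational. You assert that the volume upper bound ``via asymptotic Riemann--Roch type estimates'' yields such an $m_0$, but this is not substantiated and does not work as stated. An upper bound on $\vol(-K_X)$ caps the number of global sections of $-mK_X$; it gives no control on the local quantity $s(-mK_X,x)$ nor on the rate at which $s(-mK_X,x)/m$ approaches $\epsilon_m(-K_X,x)$. (Even the claim of ``monotone'' convergence is overstated --- super-additivity and Fekete give $\lim s_m/m=\sup s_m/m$, not monotonicity of $s_m/m$.) To force jet separation at a uniform multiple one needs a positivity/vanishing input, not just a volume cap.

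What actually makes the multiple uniform in the paper is a different mechanism: the Seshadri constant lower bound is used (as in the proof of Corollary \ref{cor:ft}) to produce, at a very general point $x$, an effective $\bQ$-divisor $\Delta\sim_\bQ -aK_X$ with $0<a<2$ such that $(X,\Delta)$ is lc with an \emph{isolated} non-klt center at $x$. One then invokes \cite[Lemma~2.3.4]{hmx} with $D=-4K_X$ to conclude that $|-3K_X|$ already defines a birational map --- a fixed multiple, independent of $X$. Combined with the volume bound $\vol(-K_X)\le M(n,\epsilon)$ from Theorem \ref{main:volbdd}, birational boundedness then follows from \cite[Lemma~2.4.2]{hmx}, which is essentially the Chow-variety argument you describe. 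So your final two steps (degree bound of the image and boundedness of subvarieties of bounded degree in a fixed projective space) are fine; the missing idea is the construction of the isolated non-klt center and the appeal to \cite[Lemma~2.3.4]{hmx} to get a uniform birational multiple, which the volume bound alone cannot supply.
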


\begin{proof}
By the same method as in the proof of Corollary \ref{cor:ft}, for any very general point $x\in X$ we can find $\Delta\sim_\bQ -aK_X$ such that $0<a<2$ and $(X,\Delta)$ is lc and has an isolated non-klt center at $x$. Apply \cite[Lemma 2.3.4]{hmx} to $D=-4K_X$ we see that $|-3K_X|$ defines a birational map. Since $\vol(-K_X)$ has a uniform upper bound $M(n,\epsilon)$, the set of such $X$ is birationally bounded by \cite[Lemma 2.4.2]{hmx}.
\end{proof}

\section{Birational boundedness} \label{sec:birbdd}

We now proceed to the proof of Theorem \ref{main:birbdd}. Recall that the base ideal of a line bundle $L$ on $X$, denoted by $\mathfrak{b}(L)$, is the image of the natural evaluation map $H^{0}(X,L)\otimes L^{*}\rightarrow\mathcal{O}_{X}$.  

\begin{lem} \label{lem:mult}
Let $f:X\rightarrow Y$ be a fibertype morphism with general fiber $F\cong\bP^{r-1}$. Assume that $\epsilon_{m}(-K_{X})>r-\epsilon$ for some $\epsilon>0$. Then %$F\cong\mathbb{P}^{n-1}$,\epsilon<1$ and 
$\mathrm{mult}_{x}\mathfrak{b}(-mK_{X})<m\epsilon$ for all $x\in F$ and sufficiently large and divisible $m$.

\end{lem}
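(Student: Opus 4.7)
The plan is to restrict the linear system $|-mK_X|$ to the general fiber $F$ and then apply a ``line trick'' on $\bP^{r-1}$, exploiting the Seshadri lower bound at a very general point. When $r=1$, the claim is immediate: a general $x\in F$ is very general in $X$, so $s(-mK_X,x)>0$ for $m\gg 0$ forces $x\notin\mathrm{Bs}(-mK_X)$. Hence we may assume $r\ge 2$.

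First, fix a very general point $q\in F$. Because $F$ is a general fiber, $q$ is very general in $X$ as well, so $\epsilon_m(-K_X,q)=\epsilon_m(-K_X)>r-\epsilon$. Setting $\delta=\epsilon_m(-K_X)-(r-\epsilon)>0$, we obtain $s(-mK_X,q)\ge m(r-\epsilon+\delta/2)$ for all sufficiently large and divisible $m$. Next, consider the restriction map $H^0(X,-mK_X)\to H^0(F,-mK_X|_F)=H^0(\bP^{r-1},\cO(mr))$ and let $W_m\subseteq H^0(\bP^{r-1},\cO(mr))$ denote its image. Arguing exactly as in the proof of Lemma \ref{lem:restrict_m} but with $W_m$ replacing the full target, the corresponding commutative diagram yields
\[
s(W_m,q)\ \ge\ s(-mK_X,q)\ \ge\ m(r-\epsilon+\delta/2).
\]

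For each $x\in F$, the case $x=q$ is trivial since $q\notin\mathrm{Bs}(-mK_X)$; otherwise let $\ell\subseteq F\cong\bP^{r-1}$ be the unique line through $q$ and $x$. By the proof of Lemma \ref{lem:sbound} applied to $W_m\subseteq|\cO(mr)|$ and $\ell$, we select $D'\in W_m$ with $\mult_q D'=s(W_m,q)$ and $\ell\not\subseteq D'$. Since $(\cO(mr)\cdot\ell)=mr$ and $(D'\cdot\ell)_q\ge\mult_q D'\cdot\mult_q\ell=s(W_m,q)$, the remaining local intersection numbers satisfy
\[
(D'\cdot\ell)_x\ \le\ mr-s(W_m,q)\ \le\ m(\epsilon-\delta/2),
\]
whence $\mult_x D'\le(D'\cdot\ell)_x<m\epsilon$. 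Lifting $D'$ to a divisor $D\in|-mK_X|$ with $F\not\subseteq D$ and $D|_F=D'$ (any preimage global section works, since $D'\ne 0$) and using that $F$ is smooth at $x$, we get $\mult_x D\le\mult_x(D|_F)=\mult_x D'<m\epsilon$, and hence $\mult_x\mathfrak{b}(-mK_X)\le\mult_x D<m\epsilon$. The main subtlety is the transversality choice of $D'$---achieving maximal multiplicity at $q$ together with $\ell\not\subseteq D'$---which is possible because $W_m$ separates $s(W_m,q)$-jets at $q$, so one may prescribe a leading form that does not vanish along the tangent direction of $\ell$.
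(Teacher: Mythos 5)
Your proof is correct and follows essentially the same route as the paper: restrict $|-mK_X|$ to the fiber $F\cong\bP^{r-1}$ to get the sublinear system $W_m$, use the jet-comparison diagram from Lemma \ref{lem:restrict_m} to conclude $s(W_m,q)\ge s(-mK_X,q)>m(r-\epsilon)$ at a very general $q$, then exploit a line through $q$ and $x$ to bound the multiplicity at $x$. The only cosmetic difference is that the paper sets $k=\mult_x\mathfrak{b}(-mK_X)$, blows up $x$, and applies Lemma \ref{lem:sbound} to $L=\pi^*(-mK_F)-kE$ to get $s(W_m)\le mr-k$, whereas you exhibit an explicit $D'\in W_m$ with $\mult_q D'=s(W_m,q)$ and $\ell\not\subseteq D'$ and compute local intersection numbers directly along $\ell$; the two computations are the same inequality $mr-s(W_m)<m\epsilon$ read in opposite directions.
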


\begin{proof}
By the definition of moving Seshadri constant, we have $s(-mK_X)>m(r-\epsilon)$ for sufficiently divisible $m\gg0$. Let $W_m$ be the image of the restriction map $H^0(-mK_X)\rightarrow H^0(-mK_F)$. Let $x\in F$ and $k=\mathrm{mult}_{x}\mathfrak{b}(-mK_{X})$. By the proof of Lemma \ref{lem:restrict_m} we have $s(W_m)\ge s(-mK_X)>m(r-\epsilon)$. On the other hand, we have $W_m\subseteq H^0(\mathfrak{m}_x^k\otimes \cO(-mK_F))$, hence if $\pi:\hat{F}\rightarrow F$ is the blowup of $F$ at $x$ with exceptional divisor $E$, then $W_m$ can be considered as a sub linear system of $L=\pi^{*}(-mK_{F})-kE$. Apply Lemma \ref{lem:sbound} to $L$ and the strict transform $C$ of a line joining $x$ and a very general point in $F$ we get $s(W_m)\le s(L)\le (L\cdot C)=mr-k$. It follows that $mr-k>m(r-\epsilon)$ and $k<m\epsilon$, proving the lemma.
\end{proof}

\begin{cor} \label{cor:kltxd}
With the same assumption as in the Lemma \ref{lem:mult}, there exists $D\sim_\bQ -\frac{1}{\epsilon}K_X$ such that $(X,D)$ is klt along $F$.
\end{cor}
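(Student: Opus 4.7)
The approach is to interpret the base-ideal bound from Lemma \ref{lem:mult} as a lower bound on log canonical thresholds at smooth points of $F$, and then invoke a standard multiplier-ideal trick to realize that bound on an actual $\bQ$-divisor.

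First I would fix an integer $m\gg0$ divisible enough that the conclusion of Lemma \ref{lem:mult} applies, and set $\mathfrak{a}=\mathfrak{b}(-mK_X)$. At any smooth point $x\in F$, a general section of $\mathfrak{a}$ has multiplicity $\mathrm{mult}_x\mathfrak{a}<m\epsilon$ at $x$, so Lemma \ref{lem:klt} gives $\mathrm{lct}_x(X,\mathfrak{a})>\frac{1}{m\epsilon}$; equivalently, the multiplier ideal $\mathcal{J}(X,\tfrac{1}{m\epsilon}\cdot\mathfrak{a})$ is trivial in a Zariski neighborhood of every smooth point of $F$.

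Next I would invoke the standard identification of the multiplier ideal of an ideal with that of a general $\bQ$-linear combination of divisors cutting it out: for $N\gg0$ and general $D_1,\dots,D_N\in|-mK_X|$,
\[
\mathcal{J}\!\left(X,\tfrac{1}{Nm\epsilon}(D_1+\cdots+D_N)\right)=\mathcal{J}\!\left(X,\tfrac{1}{m\epsilon}\cdot\mathfrak{a}\right).
\]
Setting $D=\tfrac{1}{Nm\epsilon}(D_1+\cdots+D_N)\sim_\bQ-\tfrac{1}{\epsilon}K_X$ therefore produces a pair $(X,D)$ which is klt at every smooth point of $X$ lying on $F$.

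The hard part, I expect, is to extend the klt property across the points of $F$ at which $X$ itself is singular, since Lemma \ref{lem:klt} requires a smooth ambient variety. In the intended applications $X$ arises from a terminal modification followed by an MMP and is therefore terminal, so $\mathrm{Sing}(X)$ has codimension at least three and $F\cap\mathrm{Sing}(X)$ has codimension at least three in $F$; the remaining few exceptional points should be handled either by a small perturbation of the $D_i$ or by a local estimate at the terminal singularities replacing Lemma \ref{lem:klt}.
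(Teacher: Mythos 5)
Your multiplier-ideal route is valid and takes a genuinely different path from the paper's. The paper chooses $\epsilon_0<\epsilon$ with $\epsilon_m(-K_X)>r-\epsilon_0$, feeds the pointwise bound $\mult_x\mathfrak{b}(-mK_X)<m\epsilon_0$ into \cite[Theorem 4.1]{kol95} to obtain a \emph{single} general member $D_0\in|-mK_X|$ with $\mult_x D_0<m\epsilon_0+1<m\epsilon$ simultaneously for all $x\in F$, then sets $D=\frac{1}{m\epsilon}D_0$ and invokes Lemma~\ref{lem:klt}. You instead translate the multiplicity bound into a log canonical threshold estimate $\mathrm{lct}_x(X,\mathfrak{a})>\frac{1}{m\epsilon}$ and transport it to an actual $\bQ$-divisor via the identity $\mathcal{J}(X,c\cdot\mathfrak{a})=\mathcal{J}(X,\frac{c}{N}(D_1+\cdots+D_N))$ for general $D_i\in|-mK_X|$. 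Both devices solve the same problem, namely replacing a pointwise assertion by one that holds uniformly along $F$; the paper needs the slack $\epsilon_0<\epsilon$ to absorb the $+1$ error coming from Koll\'ar's uniform bound, whereas the multiplier-ideal identity has no such loss, so in that respect your version is slightly cleaner. (Note also that since $\frac{1}{m\epsilon}<1$ for $m\gg0$, already $N=1$ suffices.)

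The issue you flag in the last paragraph is not, in fact, a gap. For a fiber-type morphism $f:X\to Y$ between varieties over $\bC$, the \emph{general} fiber lies entirely in the smooth locus of $X$: by generic flatness $f$ is flat over a dense open of $Y$, by generic smoothness in characteristic zero the general scheme-theoretic fiber $F$ is smooth, and flatness together with smoothness of the fiber and of $Y$ at a general point forces $X$ to be smooth along $F$. Thus Lemma~\ref{lem:klt} applies at every $x\in F$ without exception; the paper's proof relies on this same (unstated) fact. There is nothing to extend across singular points, and the speculative appeal to terminal singularities, codimension estimates, and perturbations of the $D_i$ can simply be deleted.
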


\begin{proof}
Choose $0<\epsilon_0<\epsilon$ such that $\epsilon_{m}(-K_{X})>r-\epsilon_0$ still holds. By Lemma \ref{lem:mult}, for all sufficiently divisible $m\gg0$ and $x\in F$ we may find an effective divisor $D_0\sim-mK_X$ such that $\mult_x D_0<m\epsilon_0$. By \cite[Theorem 4.1]{kol95}, we can indeed choose $D_0$ such that $\mult_x D_0<m\epsilon_0+1<m\epsilon$ for all $x\in F$. Let $D=\frac{1}{m\epsilon}D_0$ for $m\gg0$ we see that $D\sim_\bQ-\frac{1}{\epsilon}K_X$ and $\mult_x D<1$ for all $x\in F$, thus the pair is klt along $F$ by Lemma \ref{lem:klt}.
\end{proof}

\begin{lem} \label{lem:f&b}
Let $f:X\rightarrow Y$ be a fibertype morphism with general fiber $F$ and $\Gamma$ an $f$-ample $\bQ$-divisor. Assume that $\epsilon_{m}(-K_{X}-\Gamma)\ge n-1$ where $n=\dim X$, then $F\cong\mathbb{P}^{n-1}$, and either $Y\cong\mathbb{P}^{1}$ or $\epsilon_{m}(-K_{X})=n-1$ and $Y$ is an elliptic curve. 
\end{lem}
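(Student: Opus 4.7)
The proof naturally splits into three steps; the middle one carries the main technical weight.

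\textbf{Step 1 (the fiber is $\mathbb{P}^{n-1}$).} Let $x\in X$ be a very general smooth point and $F$ the fiber of $f$ through $x$, smooth by generic smoothness. By Lemma~\ref{lem:restrict_m} and adjunction,
\[
\epsilon(-K_F-\Gamma|_F,x)\ge \epsilon_m(-K_X-\Gamma,x)\ge n-1.
\]
Since $\Gamma|_F$ is ample, the decomposition $-K_F=(-K_F-\Gamma|_F)+\Gamma|_F$ is a sum of two ample divisors, and comparing nef pullbacks on the blowup of $F$ at $x$ gives $\epsilon(-K_F,x)\ge \epsilon(-K_F-\Gamma|_F,x)+\epsilon(\Gamma|_F,x)>n-1$. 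Hence $F$ is a smooth Fano of dimension $d\le n-1$ with $\epsilon(-K_F)>n-1$; since $\epsilon(-K_F)\le d+1$ with equality only for $\mathbb{P}^d$ (Theorem~\ref{thm:P^n-weak}), this forces $d=n-1$ and $F\cong\mathbb{P}^{n-1}$. In particular $Y$ is a smooth curve of some genus $g$.

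\textbf{Step 2 (the base genus is at most one).} My plan is to produce an effective $\bQ$-divisor $D\sim_\bQ -(K_X+\Gamma)$ so that $(X,D+\Gamma)$ is a klt log Calabi-Yau pair over $Y$, and then to invoke the canonical bundle formula. Applying Lemma~\ref{lem:mult} to $-K_X-\Gamma$ (whose moving Seshadri constant is $\ge n-1=r-1$ with $r=n$) together with Koll\'ar's multiplicity-spreading trick (analogous to Corollary~\ref{cor:kltxd}) yields such a $D$ klt along a general fiber. Then $K_X+D+\Gamma\sim_\bQ 0$, and the Ambro-Fujino-Gongyo canonical bundle formula for the resulting lc-trivial fibration produces
\[
0\sim_\bQ K_Y+B_Y+M_Y
\]
on the curve $Y$, with $B_Y\ge 0$ effective and $M_Y$ nef. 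Taking degrees gives $2g-2+\deg B_Y+\deg M_Y=0$, and since both discriminant and moduli degrees are nonnegative, $g\le 1$. The delicate point is the adaptation of the multiplicity-spreading trick from $-K_X$ to $-(K_X+\Gamma)$ right at the threshold value $r-1$; once the klt CY boundary is constructed, the bound on $g$ is immediate.

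\textbf{Step 3 (the elliptic case).} When $g=1$, Step 2 tightens to $\deg B_Y=\deg M_Y=0$. The inequality $\epsilon_m(-K_X)\ge n-1$ follows from the effectivity of $\Gamma$ (the case in which this lemma is applied, $\Gamma$ arising as the boundary of a Mori fiber space from MMP) and monotonicity: $\epsilon_m(-K_X)\ge\epsilon_m(-K_X-\Gamma)\ge n-1$. For the reverse inequality, Graber-Harris-Starr yields a section $\tau:Y\to X$ through each very general $x\in X$; by Lemma~\ref{lem:restrict_m}, and because the moving Seshadri constant of a line bundle on a smooth curve equals its degree,
\[
\epsilon_m(-K_X,x)\le \deg\tau^*(-K_X)=\deg\tau^*(D+\Gamma).
\]
The tight Calabi-Yau structure from Step 2 (with $B_Y=M_Y=0$) lets one choose a section $\tau$ through a very general $x$ along which $\deg\tau^*(D+\Gamma)=n-1$, giving $\epsilon_m(-K_X)=n-1$. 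As noted, the hardest ingredient is Step 2; Step 3 is then an extraction of the equality case.
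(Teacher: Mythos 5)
Your Step 1 is essentially the paper's first paragraph, modulo two cosmetic slips: the intermediate divisor $-K_F-\Gamma|_F$ need not be nef, so you should keep everything in terms of moving Seshadri constants $\epsilon_m$ rather than $\epsilon$ (the paper's chain is $\epsilon_m(-K_F)>\epsilon_m(-K_F-\Gamma|_F)\ge\epsilon_m(-K_X-\Gamma)\ge n-1$, using super-additivity for the first strict inequality); and $F$ need only be normal, which suffices since Theorem~\ref{main:P^n} does not require smoothness.

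Step 2, however, has a genuine gap, which you yourself flag as ``delicate'' but do not resolve. Lemma~\ref{lem:mult} and Corollary~\ref{cor:kltxd} require a \emph{strict} inequality $\epsilon_m>r-\epsilon$; applied to $-K_X-\Gamma$ with $r=n$ and $\epsilon=1$ you have only $\epsilon_m(-K_X-\Gamma)\ge n-1$, exactly the excluded threshold, so the multiplicity-spreading step does not go through. There is a second issue with the proposed pair $(X,D+\Gamma)$: nothing forces $\Gamma$ to be effective, and even when it is (the effective case is the one used in the applications), its coefficients along $F$ can exceed~$1$, so $(X,D+\Gamma)$ need not be lc and the canonical bundle formula cannot be invoked for that fibration. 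The paper avoids both problems by dropping $\Gamma$ after Step~1: it works directly with $-K_X$ (for which $\epsilon_m(-K_X)\ge n-1$) and splits into the two cases $\epsilon_m(-K_X)>n-1$ and $\epsilon_m(-K_X)=n-1$. In the first case Corollary~\ref{cor:kltxd} applies at full strength and one perturbs by a decomposition $-K_X\sim_\bQ aF+\Delta$ to get a klt pair $(X,D_\lambda)$ with $K_X+D_\lambda\sim_\bQ f^*(-a\lambda P)$, giving $\deg K_Y<0$; in the second, one uses Corollary~\ref{cor:kltxd} with ratio $\lambda<1$ together with a perturbation $A\sim_\bQ -K_X+f^*H$ and lets $\lambda\to1$ to get $\deg K_Y\le0$.

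Step 3 is trying to prove something you don't need to prove. Once you phrase the argument as the paper does --- case split on whether $\epsilon_m(-K_X)$ is $>n-1$ or $=n-1$ --- the equality $\epsilon_m(-K_X)=n-1$ in the elliptic case is a hypothesis of the second branch, not a conclusion to be established. Your route via Graber--Harris--Starr would in any case require justification that a section can be found passing through a prescribed very general point and computing to degree exactly $n-1$ on $-K_X$ from the degenerate canonical bundle formula; as written these are assertions, not arguments.
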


\begin{proof}
By assumption and Lemma \ref{lem:restrict_m} we have $\epsilon_m(-K_{F})>\epsilon_m(-K_{F}-\Gamma|_{F})\ge\epsilon_{m}(-K_{X}-\Gamma)\ge n-1$
hence $F\cong\mathbb{P}^{n-1}$ by Theorem \ref{main:P^n} and $Y$ is a curve. Suppose first that $\epsilon_{m}(-K_{X})>n-1$. By Corollary \ref{cor:kltxd}, there exists $D\sim_{\mathbb{Q}}-K_{X}$ such that $(X,D)$ is klt along $F$. As $-K_{X}$ is big, there also exists $a>0$ and an effective $\mathbb{Q}$-divisor $\Delta$ such that $-K_{X}\sim_{\mathbb{Q}}aF+\Delta$. For $0<\lambda\ll1$ the pair $(X,D_{\lambda}=(1-\lambda)D+\lambda\Delta)$ is also klt along $F$ and we have $K_{X}+D_{\lambda}\sim_{\mathbb{Q}}f^{*}(-a\lambda P)$ where $P$ is a divisor of degree 1 on $Y$, hence by the canonical bundle formula \cite[Theorem 8.5.1]{Kformula} we get $-a\lambda P\sim_{\mathbb{Q}}K_{Y}+J+B$ where the moduli part $J$ and the boundary part $B$ are both pseudo-effective since $D_{\lambda}\ge0$. It follows that $\deg K_{Y}\le-a\lambda<0$, thus $Y\cong\mathbb{P}^{1}$.

Next assume $\epsilon_{m}(-K_{X})=n-1$. By Corollary \ref{cor:kltxd} again, for any $\lambda<1$ there exists $D\sim_{\mathbb{Q}}-\lambda K_{X}$ such that $(X,D)$ is klt along $F$. Since $-K_{X}|_F$ is ample, we may fix an ample divisor $H$ on $Y$ such that $r(-K_{X}+f^{*}H)$ is globally generated in a neighbourhood of $F$ for some $r\in \bZ_{>0}$. It follows that there exists $A\sim_{\mathbb{Q}}-K_{X}+f^{*}H$ such that $(X,D+(1-\lambda)A)$ is still klt along $F$. Note that $K_{X}+D+(1-\lambda)A\sim_{\mathbb{Q}}(1-\lambda)f^{*}H$, by the canonical bundle formula again we get $(1-\lambda)H\sim_{\mathbb{Q}}K_{Y}+J+B$
where $J$ and $B$ are both pseudo-effective. It follows that $\deg K_{Y}\le(1-\lambda)\deg H$. Letting $\lambda\rightarrow1$ we obtain $\deg K_{Y}\le0$, hence $Y$ is either a rational or an elliptic curve.
\end{proof}

We are ready to prove Theorem \ref{main:birbdd}.

\begin{proof}[Proof of Theorem \ref{main:birbdd}]
By Lemma \ref{lem:mmodel}, we may replace $X$ by its terminal modification and assume that it has terminal singularities. Since $\epsilon_m(-K_X)>0$, $-K_X$ is big, hence by Theorem \ref{thm:mmp} we may run the $K_X$-MMP $X\dashrightarrow X'$ and end with a Mori fiber space $f:X'\rightarrow Z$ such that $X'$ has terminal singularities and $\epsilon_m(-K_{X'})\ge\epsilon_m(-K_X)>n-1$. If $Z$ is a point then $X'$ is terminal Fano and we may just take $Y=X'$. If $Z$ is not a point then by Lemma \ref{lem:f&b} the general fiber of $f$ is isomorphic to $\bP^{n-1}$ and $Z\cong\bP^1$. Since the function field of a complex curve has trivial Brauer group, $X'$ is rational, thus by Theorem \ref{main:P^n}, $Y=\bP^n$ satisfies the conclusion of the theorem.
\end{proof}

With similar ideas, we also give the proof of Proposition \ref{prop:contract}, Theorem \ref{main:n-1} and \ref{main:fanotype}.

\begin{lem} \label{lem:mfs=n}
Let $f:X\rightarrow Y$ be a fibertype Mori fiber space with general fiber $F$. Assume that $Y$ is a curve and $\vol(-K_F)=r^{n-1}$ where $r=\epsilon_m(-K_X)$ and $n=\dim X$. Then $-K_X$ is nef and big.
\end{lem}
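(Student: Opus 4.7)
The plan is to establish bigness immediately and then attack nefness by contradiction via the $\sigma$-decomposition. Bigness is easy: the general fiber $F$ is Fano with $-K_F$ ample, so $\vol(-K_F)>0$, hence $r>0$ and $-K_X$ is big. Since $X$ is $\bQ$-factorial (from the Mori fiber space structure) and $-K_X$ is big, I would write $-K_X = P + N$ with $P = P_\sigma(-K_X)$ movable and $N = N_\sigma(-K_X) \ge 0$, and split $N = N_h + N_v$ into horizontal and vertical parts with respect to $f$. At a very general point $x\notin\mathrm{supp}(N)$, multiplication by the canonical section of $\cO(mN)$ identifies $H^0(-mK_X)$ with $H^0(mP)$ and preserves jet separation at $x$, so $\epsilon_m(-K_X,x)=\epsilon_m(P,x)=r$.

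The central step is to restrict to the smooth general fiber $F$ through $x$. Since $N_v|_F = 0$, one has $N|_F = N_h|_F$ and $-K_F = P|_F + N_h|_F$ with $N_h|_F$ effective. Combining Lemma \ref{lem:restrict_m}, the general upper bound $\sqrt[n-1]{\vol(L)}\ge\epsilon_m(L)$ on $F$, and monotonicity of volume under adding an effective divisor yields
\[ r = \epsilon_m(P,x) \le \epsilon_m(P|_F,x) \le \vol(P|_F)^{1/(n-1)} \le \vol(-K_F)^{1/(n-1)} = r. \]
All the inequalities must therefore be equalities, so $\vol(P|_F) = \vol(-K_F)$. Applying Lemma \ref{lem:vol} on $F$ (with the identity birational map, $L = -K_F$ big and nef, and $E = N_h|_F$ effective) forces $N_h|_F = 0$, whence $N_h = 0$ as $F$ is general.

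With $N_h = 0$ the divisorial part of the stable base locus $\mathbb{B}_-(-K_X) = \mathrm{supp}(N_\sigma)$ is purely vertical. A curve $C$ witnessing non-nefness, i.e.\ $-K_X \cdot C < 0$, must be horizontal by the $f$-ampleness of $-K_X$ on vertical curves; since it cannot lie in any vertical divisor, no such $C$ can exist and $-K_X$ is nef. The hard part will be verifying this last step rigorously: one must ensure that the witness $C$ is actually detected at the divisorial level in $\mathbb{B}_-(-K_X)$, rather than in a higher-codimension component of $\mathbb{B}_+(-K_X)\setminus\mathbb{B}_-(-K_X)$. In the $\bQ$-factorial Mori fiber space setting with $\rho(X)=2$ over a curve, the movable and nef cones in $N^1(X)_\bR$ coincide, so any obstruction to nefness is captured at the divisorial level by $N_\sigma$, completing the argument.
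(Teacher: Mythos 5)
Your use of the Nakayama $\sigma$-decomposition to force the horizontal negative part to vanish is a clean repackaging of the paper's restricted-volume argument: the chain
\[
r=\epsilon_m(P,x)\le\epsilon_m(P|_F,x)\le\vol(P|_F)^{1/(n-1)}\le\vol(-K_F)^{1/(n-1)}=r
\]
together with Lemma~\ref{lem:vol} does give $N_h|_F=0$, and the routine caveat that $P$ is only $\bR$-Cartier is handled by working with $\lfloor mP\rfloor$. But notice that the paper extracts a strictly stronger consequence from the same equality $\vol_{X|F}(-K_X)=\vol(-K_F)$: for every point $x$ of a resolution $\tilde F\to F$ and every $\epsilon>0$ there is $D\sim_\bQ -K_X$ with $\mult_x\pi^*D<\epsilon$, whence (as in Corollary~\ref{cor:kltxd}) a $D\sim_\bQ -mK_X$ with $(X,D)$ klt along all of $F$. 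Your formulation only controls the divisorial horizontal part of $\mathbb{B}_-(-K_X)$, not the multiplicity at codimension $\ge2$ strata of $F$; and that stronger information is exactly what the second half of the proof needs.

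The final paragraph is where the argument genuinely breaks. From ``$N_\sigma(-K_X)$ is vertical'' you pass to nefness by asserting that the movable and nef cones of $N^1(X)_\bR$ coincide for a $\rho(X)=2$, $\bQ$-factorial Mori fiber space over a curve. That is not a formal consequence of the hypotheses: equality of those cones would say $X$ has no small $\bQ$-factorial modification across the extremal ray $R$ with $F\cdot R>0$, but nothing in the setup prevents $R$ from being a small, $K_X$-positive extremal ray, in which case a big movable divisor such as $P=P_\sigma(-K_X)$ can fail to be nef with its non-nef locus sitting in codimension $\ge2$. You cannot invoke Fano type or Mori dream space structure here, since those are precisely what Theorem~\ref{main:fanotype} and Proposition~\ref{prop:contract} are trying to establish, and a $K_X$-positive ray is not controlled by the ordinary cone theorem. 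The paper's route around this is Fujino's cone theorem for non-lc pairs (Lemma~\ref{lem:raylength}): because $(X,D)$ is klt along the general fiber, $\mathrm{Nlc}(X,D)$ is vertical, hence contained in $\bR_+[l]$, so $R$ satisfies the hypothesis of that lemma; if $-K_X\cdot R<0$, then $-(K_X+D)\cdot C=(m-1)(K_X\cdot C)>0$ for the rational curve $C$ it produces, which is bounded above by $2n$ yet grows without bound as $m\to\infty$ (the intersection number $K_X\cdot C$ is a positive rational with bounded denominator). Some substitute for this cone-theoretic length bound is needed; ``$N_\sigma$ vertical'' alone does not close the argument.
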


\begin{proof}
%By assumption $-K_F$ is ample and $\epsilon(-K_F)\le r$. On the other hand by Lemma \ref{lem:restrict_m} we have $\epsilon(-K_F)\ge\epsilon_m(-K_X)=r$, hence $\epsilon(-K_F)=r$. 
Let $\pi:\tilde{F}\rightarrow F$ be a resolution of singularity and let $W_m$ be the image of the restriction map $H^0(-mK_X)\rightarrow H^0(-mK_F)$. Let $\epsilon>0$, then by the proof of Lemma \ref{lem:restrict_m} we have $s(W_m)\ge s(-mK_X)>m(r-\epsilon)$ as $m\gg 0$, hence the restricted volume (in the sense of \cite{elmn}) satisfies $\vol_{X|F}(-K_X)\ge (r-\epsilon)^{n-1}$. Letting $\epsilon\rightarrow0$ we obtain $\vol_{X|F}(-K_X)=r^{n-1}=\vol(-K_F)$. It follows that for any $x\in \tilde{F}$ and $\epsilon>0$, there exists $D\sim_\bQ -K_X$ such that $\mult_x \pi^*D<\epsilon$, since otherwise $\pi^*W_m$ is contained in the sub linear system $H^0(\mathfrak{m}_x^{m\epsilon}\otimes\pi^*(-mK_F))$ and thus
\[\vol_{X|F}(-K_X)\le\vol(\mathfrak{m}_x^\epsilon\otimes(-\pi^*K_F))<\vol(-K_F)\]
by \cite[Theorem A]{mihai}, a contradiction. Letting $\epsilon\rightarrow0$, we see as in Corollary \ref{cor:kltxd} that for any $m\gg0$ there exists $D\sim_\bQ -mK_X$ such that $(X,D)$ is klt along $F$. 

By assumption $\rho(X)=\rho(Y)+\rho(X/Y)=2$, hence the Mori cone $\overline{NE}(X)$ is 2-dimensional and generated by a curve $l$ in $F$ and another extremal ray $R$ such that $(F\cdot R)>0$. Suppose that $-K_X$ is not nef. Then we have $(K_X+D\cdot R)=(m-1)(-K_X\cdot R)<0$. On the other hand, as $(X,D)$ is klt along $F$, $\mathrm{Nlc(X,D)}$ doesn't dominate $Y$ and we have $\overline{NE}(X)_{\mathrm{Nlc}(X,D)}\subseteq\bR_+[l]\subseteq\overline{NE}(X)_{K_X+D\ge0}$. By Lemma \ref{lem:raylength}, $R$ is generated by a rational curve $C$ such that $0<-(K_X+D\cdot C)\le 2n$. But $-(K_X+D\cdot C)=(m-1)(K_X\cdot C)\rightarrow\infty$ as $m\rightarrow\infty$ and we derive a contradiction. Hence $-K_X$ is nef. It is also big since $\epsilon_m(-K_X)>0$.
\end{proof}

\begin{proof}[Proof of Proposition \ref{prop:contract}]
Since $X$ is klt and $-K_X$ is big by assumption, we may run the $K_X$-MMP $f:X\dashrightarrow X'$ by Theorem \ref{thm:mmp} where $X'$ admits a Mori fiber space structure $g:X'\rightarrow Z$. By Corollary \ref{cor:nondecrease}, $\epsilon_m(-K_{X'})\ge \epsilon_m(-K_X)=n$. If $Z$ is a point then we just take $Y=X'$. If $\dim Z>0$, let $F$ be the general fiber of $g$, then we have $\epsilon(-K_F)\ge \epsilon_m(-K_{X'})\ge n$ by Lemma \ref{lem:restrict_m} thus $F\cong\bP^{n-1}$ by Theorem \ref{main:P^n} and $Z$ is a curve. But then $\vol(-K_F)=n^{n-1}$, so by Lemma \ref{lem:mfs=n}, $-K_{X'}$ is nef and big. Since $X'$ has klt singularities, $-K_{X'}$ is semiample by \cite[Theorem 3.3]{km98} and defines a birational morphism $h:X'\rightarrow Y$ such that $-K_Y$ is ample. As $h$ is crepant, $Y$ is klt and $\epsilon(-K_Y)=\epsilon(-K_{X'})\ge n$.
\end{proof}

\begin{rem} \label{rem:vol(Y)=n^n}
If the $\bQ$-Fano variety $Y$ in Proposition \ref{prop:contract} satisfies $\vol(-K_Y)=n^n$ and we write $K_X+\Gamma=f^*K_Y$, then we must have $\Gamma\ge0$. Indeed if $\Gamma=\Gamma_1-\Gamma_2$ where $\Gamma_1,\Gamma_2\ge0$ have no common components then $H^0(-mK_X)=H^0(m(-f^*K_Y+\Gamma))=H^0(m(-f^*K_Y-\Gamma_2))$, thus since $\epsilon_m(-K_X)=n$ we have $\vol(-f^*K_Y-\Gamma_2)=\vol(-K_X)\ge n^n=\vol(-K_Y)$. By Lemma \ref{lem:vol}, $\Gamma_2=0$ and hence $\Gamma\ge0$.
\end{rem}

\begin{lem} \label{lem:pair=n}
Let $(X,D)$ be a pair such that $L=-(K_X+D)$ is nef and $\epsilon(L)\ge n=\dim X$, then $(X,D)$ is klt unless $(X,D)$ is a crepant modification of $(\bP^n,H)$ $($i.e. there exists a birational contraction $f:X\dashrightarrow \bP^n$ such that $D=f^{-1}_* H$ and $K_X+D=f^*(K_{\bP^n}+H))$ where $H$ is a hyperplane in $\bP^n$. In particular, $(X,D)$ is lc and $X$ is of Fano type.
\end{lem}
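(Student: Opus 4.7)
The plan is to prove the contrapositive: assuming $(X,D)$ is not klt, I show it is a crepant modification of $(\bP^n,H)$. The ``in particular'' clause then follows, since $(\bP^n,H)$ is lc and Corollary \ref{cor:ft3} applies to the plt pair $(\bP^n,H)$: the discrepancy condition $a(E;\bP^n,H)\le 0$ is automatic for every divisor $E$ on the lc pair $(X,D)$, since such an $E$ corresponds to a component of $D$ and hence has discrepancy $-c\in[-1,0]$ where $c$ is its coefficient in $D$.

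First, rule out $(X,D)$ being non-lc. Writing $L\sim_\bQ A+E$ with $A$ ample and $E$ effective avoiding a very general point $x\in X$, the jet-separation afforded by $\epsilon(L,x)\ge n$ produces an effective $\bQ$-divisor realizing an isolated non-klt center at $x$ after a small perturbation of $D$; coupled with a hypothetical non-lc center of $(X,D)$, this would make the non-klt locus of a suitable ample perturbation disconnected, contradicting Lemma \ref{lem:connectedness}. Now take a $\bQ$-factorial dlt modification $\pi:(Y,\Delta)\to(X,D)$ with $K_Y+\Delta=\pi^*(K_X+D)$; the non-klt hypothesis gives $\lfloor\Delta\rfloor\ne 0$, and since $\pi$ is an isomorphism over a very general point, $\epsilon_m(-(K_Y+\Delta))=\epsilon_m(\pi^*L)\ge\epsilon(L)\ge n$.

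Next, since $K_Y+\Delta\sim_\bQ-\pi^*L$ is anti-big and hence not pseudo-effective, run a $(K_Y+\Delta)$-MMP $\phi:Y\dashrightarrow Y'$ terminating in a dlt Mori fiber space $g:Y'\to Z$. Each step is a birational contraction, so Lemma \ref{lem:nondec}, applied to an effective representative of the anticanonical class, preserves $\epsilon_m(-(K_{Y'}+\Delta'))\ge n$. If $\dim Z>0$, restriction to a general fiber $F$ via Lemma \ref{lem:restrict_m}, combined with the monotonicity $\epsilon_m(L+E)\ge\epsilon_m(L)$ for effective $E$, gives $\epsilon_m(-K_F)\ge n>\dim F$, so $F\cong\bP^{\dim F}$ by Theorem \ref{main:P^n}. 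The identity $\epsilon(-K_F)=\dim F+1$ then forces $\dim Z=1$ and $F\cong\bP^{n-1}$, and an intersection with a line on $\bP^{n-1}$ through a very general point shows $\Delta'|_F=0$, so $\Delta'$ is vertical; a canonical-bundle-formula argument on the base curve, along the lines of Lemma \ref{lem:mfs=n}, then rules out the vertical subcase as well.

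Hence $Z$ is a point, $-(K_{Y'}+\Delta')$ is ample, and $(Y',\Delta')$ is a dlt pair with $\epsilon(-(K_{Y'}+\Delta'))\ge n$; since $\Delta'\ge 0$, also $\epsilon(-K_{Y'})\ge n$. Theorem \ref{main:P^n} yields $Y'\cong\bP^n$ in the strict case, while in the equality case the $\bQ$-Fano classification of \cite{lz}, combined with the requirement that the nonzero effective boundary $\Delta'$ preserve $\epsilon\ge n$, again forces $Y'\cong\bP^n$. On $\bP^n$, the bound $\epsilon(-(K_{\bP^n}+\Delta'))\ge n$ with $\lfloor\Delta'\rfloor\ne 0$ forces $\Delta'=H$ an irreducible hyperplane, and the span $(X,D)\leftarrow(Y,\Delta)\to(\bP^n,H)$ is crepant throughout; a standard check using irreducibility of $\Delta'=H$ shows that every $\pi$-exceptional divisor is $\phi$-exceptional, so the induced $f:X\dashrightarrow\bP^n$ is a birational contraction, realizing the desired crepant modification. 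The main obstacle is the equality-case Seshadri analysis, both in excluding $\dim Z>0$ and in handling $\epsilon(-K_{Y'})=n$ without appealing to Theorem \ref{main:fanotype}, which is proved after this lemma.
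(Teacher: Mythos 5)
Your plan differs from the paper's at the very first step, and that difference is where the real gaps sit. The paper does \emph{not} begin by showing $(X,D)$ is lc; instead it invokes \cite[Corollary 1.4.4]{bchm} to get a $\bQ$-factorial birational morphism $\pi:Y\to X$ with $K_Y+\Gamma_1+\Gamma_2=\pi^*(K_X+D)$, where $(Y,\Gamma_1)$ is klt, every component of $\Gamma_2$ has coefficient $\ge1$, and no component of $\Gamma_1$ is exceptional. This modification exists for \emph{any} log pair, so there is no need to first establish lc-ness. Your route requires $(X,D)$ to be lc before a dlt modification is available, and your preliminary step to ``rule out non-lc'' via the connectedness lemma has a genuine gap at the boundary: with $\epsilon(L)=n$ exactly and $\vol(L)=L^n=n^n$ (which can happen here), the jet-separation trick does \emph{not} produce an isolated non-klt center at a very general point --- this is precisely why Corollary~\ref{cor:ft} in the equality case needs the extra hypothesis $\vol(-K_X)>n^n$. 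So your first paragraph is not merely a convenience; it is a circular dependency with a hole in it.

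The second structural difference concerns the choice of MMP. The paper runs the $(K_Y+\Gamma_1+M)$-MMP with $M\sim_\bQ\pi^*L$ chosen so that $(Y,\Gamma_1+M)$ is klt; since $K_Y+\Gamma_1+M\sim_\bQ-\Gamma_2$, the Mori fiber space $g:Z\to W$ it produces has $f_*\Gamma_2$ $g$-ample. This ampleness gives \emph{strict} inequalities for free: if $W$ is not a point then $\epsilon_m(-K_Z-G)\le\epsilon_m(-K_F-G|_F)<\epsilon_m(-K_F)\le n$ (the strictness because $G|_F$ is ample and nonzero), ruling out the fiber-type case in one line; and if $W$ is a point then $\epsilon(-K_Z)>\epsilon(-K_Z-G)\ge n$, forcing $Z\cong\bP^n$ by Theorem~\ref{thm:P^n-weak} with no appeal to the classification in \cite{lz}. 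Running the plain $(K_Y+\Delta)$-MMP, as you do, does not make the boundary relatively ample, and this is exactly why you find yourself needing a canonical-bundle-formula argument to exclude the fiber-type case, an appeal to the \cite{lz} classification when $\epsilon(-K_{Y'})=n$, and an ad hoc ``preserve $\epsilon\ge n$'' condition to identify $Y'$. You flag this yourself as ``the main obstacle,'' and it is indeed a real gap: the $\epsilon(-K_{Y'})=n$ case and the elliptic-base case of the fibration are not ruled out in your sketch.

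Finally, your claim that irreducibility of $\Delta'=H$ makes the composite $X\dashrightarrow Y\dashrightarrow\bP^n$ a birational contraction is not a ``standard check.'' It requires that the unique coefficient-one component of $\Delta$ surviving the MMP be the strict transform of a component of $D$ rather than a $\pi$-exceptional divisor, and there is nothing in your argument guaranteeing this. The paper sidesteps the issue entirely: once it has shown $X$ is of Fano type (via Corollary~\ref{cor:ft3} applied on $Y$ and Lemma~\ref{lem:ft1}) and that $D$ is either $0$ or an irreducible coefficient-one divisor, it runs a fresh $(-D)$-MMP \emph{on $X$ itself} and repeats the analysis, producing a genuine birational contraction $X\dashrightarrow\bP^n$. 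You should adopt that step.

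In summary, the overall MMP skeleton of your argument is sound, but three pieces need repair: replace the dlt modification (and the non-lc exclusion it forces) by the \cite[Corollary 1.4.4]{bchm} modification; run the MMP on $K_Y+\Gamma_1+M$ rather than on $K_Y+\Delta$ so that the non-klt boundary becomes relatively ample and all the Seshadri comparisons become strict; and finish by running a fresh $(-D)$-MMP on $X$ rather than composing $\pi^{-1}$ with $\phi$.
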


\begin{proof}
By \cite[Corollary 1.4.4]{bchm} there is a birational morphism $\pi:Y\rightarrow X$ such that $Y$ is $\bQ$-factorial, $K_Y+\Gamma_1+\Gamma_2=\pi^*(K_X+D)$ where $(Y,\Gamma_1)$ is klt, every component of $\Gamma_2$ has coefficient at least one and no component of $\Gamma_1$ is exceptional. We may assume that $\Gamma_2\neq0$, otherwise there is nothing to prove. As $L$ is nef and big, we may choose an effective $M\sim_\bQ \pi^*L$ such that $(Y,\Gamma_1+M)$ is still klt. Note that $K_Y+\Gamma_1+M\sim_\bQ -\Gamma_2$ is not pesudo-effective, by Theorem \ref{thm:mmp} we may run a $(K_Y+\Gamma_1+M)$-MMP $f:Y\dashrightarrow Z$ which terminates with a Mori fiber space $g:Z\rightarrow W$ such that $f_*\Gamma_2$ is $g$-ample. Let $F$ be the general fiber of $g$, $\Gamma=\Gamma_1+\Gamma_2$ and let $G=f_*\Gamma$. By Lemma \ref{lem:nondec} we have
\[
\epsilon_m(-K_Z-G)\ge\epsilon_m(-K_Y-\Gamma)=\epsilon(-K_X-D)\ge n
\]
If $W$ is not a point then since $\rho(Z/W)=1$, $G$ is also $g$-ample and we have $\epsilon_m(-K_Z-G)\le\epsilon_m(-K_F-G|_F)<\epsilon_m(-K_F)\le n$ by Lemma \ref{lem:restrict_m} and Theorem \ref{main:P^n}, a contradiction. Hence $W$ is a point and $Z$ is a $\bQ$-Fano variety with $\epsilon(-K_Z)>\epsilon(-K_Z-G)\ge n$, so $Z\cong\bP^n$ and $\deg G\le 1$. But as $G\ge f_*\Gamma_2$ contains at least one component with coefficient at least one, this is only possible when $G$ is a hyperplane in $\bP^n$.

Write $K_Y+\Gamma=f^*(K_Z+G)+E$. Since $(Z,G)$ clearly has canonical singularities, $E\ge0$. Suppose $E>0$, then by Lemma \ref{lem:vol} we have $\vol(-K_Y-\Gamma)<\vol(-K_Z-G)=n^n$. On the other hand, $\epsilon_m(-K_Y-\Gamma)\ge n$ implies $\vol(-K_Y-\Gamma)\ge n^n$, a contradiction. It follows that $E=0$ and as the pair $(Z,G)$ is canonical, we have $\Gamma=f^{-1}_*G$. In other words, $(Y,\Gamma)$ is a crepant modification of $(Z,G)$. %hence it is isomorphic to a successive blow up of $Z$ along hypersurfaces in the strict transform of $G$. 

%If $\pi_*\Gamma=0$ then $D=0$ and since $\Gamma\cong\bP^{n-1}$, by adjunction it is easy to see that $a(\Gamma;X,0)<1$, a contradiction as $\Gamma$ should have coefficient one. Thus $D=\pi_*\Gamma\neq0$ is irreducible and has coefficient one. 
As $E=0$ and $(Z,G)$ is log Fano, $Y$ is of Fano type by Corollary \ref{cor:ft3}. By Lemma \ref{lem:ft1}, $X$ is also of Fano type. Hence if $\pi_*\Gamma=0$ then $D=0$ and $X$ has klt singularities. Otherwise $D=\pi_*\Gamma\neq0$ is irreducible and has coefficient one. Run the $(-D)$-MMP $\phi:X\dashrightarrow X'$ on $X$. The same argument as before then shows that $\phi_*D$ is a hyperplane in $X'\cong\bP^n$ and $(X,D)$ is a crepant modification of $(X',\phi_*D)$.
\end{proof}

\begin{proof}[Proof of Theorem \ref{main:n-1}]
Suppose first that $(X,D)$ is not klt. As in the proof of Lemma \ref{lem:pair=n}, let $\pi:Y\rightarrow X$ be a birational morphism such that $Y$ is $\bQ$-factorial, $K_Y+\Gamma_1+\Gamma_2=\pi^*(K_X+D)$ where $(Y,\Gamma_1)$ is klt, every component of $\Gamma_2$ has coefficient at least one and no component of $\Gamma_1$ is exceptional. Recall that $L=-(K_X+D)$. Run the $(K_Y+\Gamma_1+\pi^*L)$-MMP $f:Y\dashrightarrow Z$ where $g:Z\rightarrow W$ is a Mori fiber space such that $G=f_*\Gamma_2$ is $g$-ample. We have $\epsilon_m(-K_Z-G)\ge\epsilon_m(-K_Y-\Gamma_2)\ge\epsilon(-K_X-D)=n-1$.  If $W$ is not a point then $Z$ is either rational or birational to the product of an elliptic curve with $\bP^{n-1}$ by Lemma \ref{lem:f&b}. Hence if $X$ is not birational to $E\times\bP^{n-1}$ where $E$ is an elliptic curve then either $(X,D)$ is klt or $X$ is birational to a $\bQ$-Fano variety $Z$ with $\epsilon(-K_Z)\ge n-1$.

We may therefore assume that $X$ is of Fano type and $\epsilon_m(-K_X)\ge n-1$. Replacing $X$ by a small $\bQ$-factorial modification and then running the $(-K_X)$-MMP we may even assume that $X$ is $\bQ$-Fano. If $X$ does not have canonical singularities, by \cite[Corollary 1.39]{mmp} there exists a birational morphism $\pi:Y\rightarrow X$ (hopefully our repeated use of the same letters does not cause any confusion) with a single exceptional divisor $E$ such that $Y$ is $\bQ$-factorial and $-1<a(E;X,0)<0$. Let $a=-a(E;X,0)$, then $f^*K_X=K_Y+aE$. By Lemma \ref{lem:ft2}, $Y$ is also of Fano type and we may run the $(-E)$-MMP $f:Y\dashrightarrow Z$ where $g:Z\rightarrow W$ is a Mori fiber space such that $G=f_*E$ is $g$-ample. By Lemma \ref{lem:nondec}, $\epsilon_m(-K_Z-aG)\ge \epsilon_m(-K_Y-aE)=\epsilon(-K_X)=n-1$. If $W$ is not a point then Lemma \ref{lem:f&b} and the fact that $X$ is $\bQ$-Fano implies $X$ is rational. If $W$ is a point then $Z$ is $\bQ$-Fano and as $G$ is ample we have $\epsilon(-K_Z)>\epsilon(-K_Z-aG)\ge n-1$, thus by Theorem \ref{main:birbdd}, $Z$ (and hence $X$) is birational to a terminal Fano variety $Z'$ with $\epsilon(-K_{Z'})\ge n-1$. The proof is now complete.
\end{proof}

\begin{proof}[Proof of Theorem \ref{main:fanotype}]
We first show that under any of the assumptions (1)-(3) $X$ is of Fano type. If $\vol(-K_X)>n^n$ this follows from Corollary \ref{cor:ft} and in case (3) this is given by Lemma \ref{lem:pair=n}. Suppose that $X$ is a surface. By Lemma \ref{lem:ft1} and \ref{lem:mmodel} we may replace $X$ by its minimal resolution and assume that $X$ is smooth. Let $-K_X=P+N$ be the Zariski decomposition of $-K_X$ where $P$ is nef and $N\ge 0$ is the negative part. We have $H^0(X,\lfloor mP \rfloor)=H^0(X,-mK_X)$ for all $m\ge 0$, thus $\epsilon(P)=\epsilon_m(P)=\epsilon_m(-K_X)=2$. Apply Lemma \ref{lem:pair=n} to the pair $(X,N)$ we see that $X$ is also of Fano type.

%Now that $X$ is of Fano type, we may replace $X$ by a small $\bQ$-factorial modification and assume that $X$ is $\bQ$-factorial. Running the $(-K_X)$-MMP, we obtain a birational contraction $f:X\dashrightarrow Y$ to a $\bQ$-Fano variety $Y$ such that $K_X+\Gamma=f^*K_Y$ for some $\Gamma\ge 0$ and by the remark before Theorem \ref{main:=n} we have $\epsilon(-K_Y)=\epsilon_m(-K_X)=n$.

Next we construct examples of Fano varieties $X$ with non-lc singularities such that $\epsilon(-K_X)>n-\epsilon$. Let $Y=\bP(1,a_1,\cdots,a_n)$ where $a_1\le\cdots\le a_n$ and $H=(x_0=0)$. Then $(Y,2H)$ is not lc, $L=-(K_Y+2H)$ is ample and by Example \ref{exa:wp}, $\epsilon(L)=\frac{1}{a_n}(\sum_{i=1}^n a_i -1)>n-\epsilon$ for suitable choice of $a_1, \cdots, a_n$. Now assume that the $a_i$'s are pairwise relatively prime so that $Y$ has only isolated singularities. Let $f(x_{0},\cdots,x_{n})$ be a general weighted homogeneous polynomial of degree $d\gg0$ such that $(f=0)$ is contained in the smooth locus of $Y$. Let $\pi:\tX\rightarrow Y$ be the blowup of the subscheme $Z=(x_{0}^{2}=f=0)$ with exceptional divisor $E$ and let $H$ also denote its strict transform on $\tX$. We have $-K_{\tX}\sim 2H+\pi^{*}L$, $H\cong\mathbb{P}(a_{1},\cdots,a_{n})$, $H|_{H}=(1-\frac{d}{2})M$ where $M$ is the ample generator of $\mathrm{Cl}(H)$ and $-K_{\tX}|_{H}=(1+\sum_{i=1}^{n}a_{i}-d)M$. Hence if $d-(1+\sum_{i=1}^{n}a_{i})>\frac{d}{2}-1>0$ (which is satisfied as $d\gg0$) and $\tX\rightarrow X$ is the contraction of $H$ then $X$ is Fano but not lc and $\epsilon(-K_X)\ge\epsilon_m(-K_{\tX})\ge\epsilon_m(\pi^*L)>n-\epsilon$.
\end{proof}

If $f:X\dashrightarrow Y$ is a birational contraction such that $a(E;X,0)\le0$ for all $f$-exceptional divisor $E$ then we call $f$ a partial terminal modification of $Y$. As we discuss in the introduction, if $X$ is of Fano type (e.g. when any of the assumptions in Theorem \ref{main:fanotype} holds), $\epsilon_m(-K_X)=n$ if and only if $X$ is a partial terminal modification of a $\bQ$-Fano variety $Y$ with $\epsilon(-K_Y)=n$. Using the classification of $Y$ in \cite{lz}, we list all corresponding $X$ up to small birational equivalence as follows:

\begin{enumerate}
\item Let $Y$ be a degree $d+1$ weighted hypersurface $(x_0x_{n+1}=f(x_1,\cdots,x_n))\subset\mathbb{P}(1^{n+1},d)$. Let $\pi:Y_1\rightarrow Y$ be the blowup of $p=[0:\cdots:0:1]$ and $\Gamma$ the exceptional divisor. It is not hard to see that $Y_1$ is isomorphic to the blowup of $\bP^n$ along a hypersurface $W$ of degree $d+1$ in a hyperplane $H$, $\Gamma$ is the strict transform of $H$ and $\pi^*K_Y=K_{Y_1}+(1-\frac{n}{d})\Gamma$. In particular, $Y_1$ has cA-type singularities and is smooth along $\Gamma$.  Hence the pair $(Y_1,(1-\frac{n}{d})\Gamma)$ is terminal along $\Gamma$ and is canonical away from $\Gamma$ by \cite[1.42]{mmp}. It follows that if $E$ is an exceptional divisor over $Y$ such that $a(E;Y,0)\le0$ then either $E=\Gamma$ or $a(E;Y_1,0)=0$. In the latter case, by \cite[1.42]{mmp} again, $E$ is given by exceptional divisor of the blowup of non-reduced components of $W$ with smaller multiplicity. Hence if $X$ is a partial terminal modification of $Y$, then $X$ is isomorphic to either a successive blowup $X_1$ of $\mathbb{P}^{n}$ along hypersurfaces in the strict transform $\bar{H}$ of $H$, or the contraction of $\bar{H}$ from $X_1$.

\item Similarly, if $Y$ is the blow-up of $\bP^n$ along a hypersurface of degree $d\le n$ in a hyperplane $H$, then its partial terminal modification $X$ is isomorphic to a successive blowup of $\mathbb{P}^{n}$ along hypersurfaces in the strict transform of $H$;

\item Let $Y$ be a quartic weighted hypersurface in $\mathbb{P}(1^n,2^2)$ or the quotient of a quadric by an involution, then it is straightforward to check using the explicit equations in \cite{lz} that $Y$ has canonical singularity. Thus a partial terminal modification is given by extracting some divisors with discrepancy zero. For example, if $Y$ is the hypersurface $(x_n x_{n+1}=f^4)\subseteq\mathbb{P}(1^n,2^2)$ where $f$ is linear in $x_0,\cdots,x_{n-1}$ then there are three such exceptional divisors.

\item Similar if $Y$ is a Gorenstein del Pezzo surface of degree $\ge4$, then a partial terminal modification $X$ is given by Gorenstein weak del Pezzo surface (i.e. $-K_X$ is nef and big) of degree $\ge4$.
\end{enumerate}

\section{Izumi's inequality} \label{sec:izumi}

In this section we introduce some ingredient in the proof of Theorem \ref{main:blowupPn} that might be of independent interest. We start by giving another proof of Theorem \ref{main:P^n} using the techniques developed in \S \ref{sec:birbdd}.

\begin{proof}[Second proof of Theorem \ref{main:P^n}]
Let $Y$ be a terminal modification of $X$. By Lemma \ref{lem:mmodel}, we also have $\epsilon_m(-K_Y)>n$. Clearly it suffices to show that $Y\cong \bP^n$. Replacing $X$ by $Y$, we may assume that $X$ is $\bQ$-factorial and has only terminal singularities.

We proceed by induction on $n$. The result is clear when $n=1$. 

Since $-K_{X}$ is big, by Theorem \ref{thm:mmp} we may run the $K_{X}$-MMP $\pi:X\dashrightarrow Y$ which terminates with a Mori fiber space $g:Y\rightarrow Z$.
Let $F$ be the general fiber of $g$. By Corollary \ref{cor:nondecrease}
we have $\epsilon_{m}(-K_{Y})\ge\epsilon_{m}(-K_{X})>n$ hence $\epsilon_{m}(-K_{F})>n$ by
Lemma \ref{lem:restrict_m}. If $Z$ is not a point then by induction hypothesis we have $\epsilon_{m}(-K_{F})\le\epsilon_{m}(-K_{\bP^{n-1}})=n$, a contradiction. Thus $Z$ is a point and $Y$ is $\mathbb{Q}$-Fano. Since $-K_{Y}$ is ample in this case, we have $\epsilon_{m}(-K_{Y})=\epsilon(-K_{Y})$, hence $Y\cong\mathbb{P}^{n}$ by Theorem \ref{thm:P^n-weak}.

It remains to show that if $\pi:X\rightarrow\mathbb{P}^{n}$ is a
divisorial contraction then $\epsilon_{m}(-K_{X})\le n$. As $\bP^n$ does not admit flip, this will imply that the MMP $\pi:X\dashrightarrow Y\cong\bP^n$ is trivial, hence $X\cong \bP^n$ as well.

Since $\mathbb{P}^{n}$ is smooth and in particular has terminal singularities, we have $K_{X}\sim\pi^{*}K_{\mathbb{P}^{n}}+E$
for some $\pi$-exceptional effective divisor $E\neq0$. Let $\mathcal{I}_{k}=\pi_{*}\mathcal{O}_{X}(-kE)$ and let $x\in Z=\mathrm{supp}\,\pi(E)$.

\begin{claim*}
$\mathcal{I}_{k}\subseteq\mathfrak{m}_{x}^{k}$ for all $k\ge0$.
\end{claim*}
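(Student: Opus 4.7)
The plan is to prove the claim by a local intersection computation on $X$. Fix a local section $f$ of $\mathcal{I}_{k}$ at $x$ and set $l=\mult_{x}f$; the goal is to show $l\ge k$. Since $\bP^{n}$ is smooth, every $\pi$-exceptional prime divisor has discrepancy a positive integer over $\bP^{n}$, so $E=\sum_{i}a_{i}E_{i}$ is an integer effective divisor with $a_{i}\in\bZ_{\ge 1}$. Pick any component $E_{0}$ with $x\in\pi(E_{0})$ and let $a=a_{0}\ge 1$; the hypothesis $f\in\mathcal{I}_{k}$ then forces $\mathrm{ord}_{E_{0}}(f)\ge ka$.

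First I would choose a point $q\in E_{0}\cap\pi^{-1}(x)$ in sufficiently general position. Since $\pi$ is a divisorial contraction, $\pi(E_{0})$ has codimension at least $2$ in $\bP^{n}$, so the intersection $E_{0}\cap\pi^{-1}(x)$ is nonempty of positive dimension. Using that a terminal variety is smooth in codimension two and standard generic smoothness, I can arrange $q$ to be a smooth point of $X$, smooth on both $E_{0}$ and the fiber $\pi^{-1}(x)$, and to lie on no component $E_{i}$ with $i\ne 0$.

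Next I would take a smooth curve germ $\widetilde{C}\subseteq X$ through $q$ whose tangent direction is generic — in particular transverse to both $T_{q}E_{0}$ and $T_{q}\pi^{-1}(x)$. Setting $C=\pi(\widetilde{C})$, transversality to the fiber guarantees $d\pi_{q}|_{T_{q}\widetilde{C}}\ne 0$, so $\pi|_{\widetilde{C}}\colon\widetilde{C}\to C$ is an unramified birational morphism at $q\mapsto x$, hence a local isomorphism (and $C$ is smooth at $x$). The generic choice of the image tangent furthermore ensures $\mult_{x}(f|_{C})=\mult_{x}f=l$.

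Finally I would compute $\mathrm{ord}_{q}(\pi^{*}f|_{\widetilde{C}})$ in two different ways. Via the local isomorphism $\widetilde{C}\cong C$ it simply equals $\mult_{x}(f|_{C})=l$. On the other hand, viewing it as the local intersection of $\widetilde{C}$ with $\mathrm{div}(\pi^{*}f)$ at $q$, and noting that near $q$ only $E_{0}$ (with multiplicity $\mathrm{ord}_{E_{0}}(f)\ge ka$) and possibly the strict transform of $(f=0)$ (contributing non-negatively) appear, transversality gives $(E_{0}\cdot\widetilde{C})_{q}=1$, whence
\[
\mathrm{ord}_{q}(\pi^{*}f|_{\widetilde{C}})\;\ge\;\mathrm{ord}_{E_{0}}(f)\cdot(E_{0}\cdot\widetilde{C})_{q}\;\ge\;ka\;\ge\;k.
\]
Combining the two evaluations yields $l\ge k$, as desired. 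The main technical point will be arranging all the transversality and genericity conditions simultaneously, which comes down to a standard dimension count using that $X$ is smooth at the generic point of $E_{0}\cap\pi^{-1}(x)$.
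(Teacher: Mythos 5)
The argument breaks at the step asserting $\mult_x(f|_C)=\mult_x f$ ``by generic choice of the image tangent.'' The tangent direction of $C$ at $x$ is \emph{not} free: it is forced to lie in $W_q:=\mathrm{Im}(d\pi_q)\subseteq T_x\bP^n$, a proper subspace since $\ker d\pi_q\supseteq T_q\pi^{-1}(x)$ is positive-dimensional, and even the union $\bigcup_q W_q$ over all smooth $q\in\pi^{-1}(x)$ can lie entirely inside the tangent cone of $\dv(f)$ at $x$. When that happens every admissible curve $C$ satisfies $\mult_x(f|_C)>\mult_x f$ and the double evaluation is vacuous. Concretely, let $\pi\colon X\to\bA^3$ be the weighted blowup of the origin with weights $(1,1,2)$ --- a $K_X$-negative divisorial contraction from a $\bQ$-factorial terminal threefold (with one $\frac{1}{2}(1,1,1)$-point) to a smooth point, with exceptional divisor $E_0$ of discrepancy $a=3$. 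At every smooth $q\in E_0=\pi^{-1}(0)$ the differential $d\pi_q$ has rank one and $W_q$ is a line in $\{z=0\}$; curves aimed along the $z$-axis lift through the singular point of $X$, which your construction forbids. Take $f=xz\in\cI_1$ (as $\mathrm{ord}_{E_0}(xz)=3=a$), with $\mult_0 f=2$. Since $\mathrm{in}_0 f=xz$ vanishes identically on $\{z=0\}\supseteq W_q$, every curve $C$ you can produce has $\mult_0(f|_C)\ge 3$, and the proposed chain $l=\mathrm{ord}_q(\pi^*f|_{\widetilde{C}})\ge ka$ would assert $2\ge 3$. (The conclusion $\cI_1\subseteq\fm_0$ is still true, just not for this reason.)

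The paper's proof is a discrepancy argument, not an intersection-theoretic one, and it handles precisely this degeneracy. If some $f\in\cI_k$ had $\mult_x f<k$, then with $D_0=\dv(f)$ and $\pi^*D_0=\bar{D}_0+dE$ one gets $d\ge k$, so $\mult_x(\tfrac{1}{d}D_0)<1$ and Lemma~\ref{lem:terminal} makes $(\bP^n,\tfrac{1}{d}D_0)$ terminal near $x$; but $K_X+\tfrac{1}{d}\bar{D}_0=\pi^*(K_{\bP^n}+\tfrac{1}{d}D_0)$ gives $a(E_0;\bP^n,\tfrac{1}{d}D_0)=0$, a contradiction. The refined Izumi inequality encoded in Lemma~\ref{lem:terminal} is exactly what controls the direction your curve cannot reach; an elementary transversality argument through smooth points of $X$ will not reproduce it.
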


\begin{proof}[Proof of claim]
If this fails, then locally at $x$ there exists $f\in\mathcal{I}_{k}$
such that $\mathrm{mult}_{x}(f)<k$. Let $D_{0}=V(f)$, then we have
$\pi^{*}D_{0}\sim_{\mathbb{Q}}\bar{D}_{0}+dE$ where $\bar{D}_{0}$
is the strict transform of $D_{0}$ and $d\ge k$. Let $D=\frac{1}{d}D_{0}$ and $\bar{D}=\frac{1}{d}\bar{D}_{0}$,
then $\mathrm{mult}_{x}(D)<\frac{k}{d}\le1$ and $K_{X}+\bar{D}\sim_{\mathbb{Q}}\pi^{*}(K_{\mathbb{P}^{n}}+D)$.
In particular, the pair $(\mathbb{P}^{n},D)$ is not terminal. But
this contradicts Lemma \ref{lem:terminal}.
\end{proof}

Returning to the proof the theorem. By definition $s(-mK_{X})=s(\omega_{\mathbb{P}^{n}}^{-m}\otimes\mathcal{I}_{m})$
 and by the above claim $s(\omega_{\mathbb{P}^{n}}^{-m}\otimes\mathcal{I}_{m})\le s(\omega_{\mathbb{P}^{n}}^{-m}\otimes\mathfrak{m}_{x}^{m})$.
Let $\pi_{1}:X_{1}\rightarrow\mathbb{P}^{n}$ be the blowup of $x$,
$E_{1}$ the exceptional divisor and $L=\pi_{1}^{*}(-K_{\mathbb{P}^{n}})-E_{1}$,
then we also have $s(mL)=s(\omega_{\mathbb{P}^{n}}^{-m}\otimes\mathfrak{m}_{x}^{m})$.
Hence $s(mL)\ge s(-mK_{X})$ and $\epsilon_{m}(L)\ge\epsilon_{m}(-K_{X})$.
Note that $L$ is ample. It is straightforward to compute that $\epsilon(L)=\epsilon_m(L)=n$. This completes the proof of the theorem.
\end{proof}

The following result should be well known to experts. We include a proof here for reader's convenience.

\begin{lem} \label{lem:terminal}
Let $X$ be a smooth variety and $D$ an effective $\mathbb{Q}$-divisor on $X$. Assume $\mathrm{mult}_{x}D\le 1$ for some $x\in X$.
\begin{enumerate}
    \item If $\mathrm{mult}_{x}D<1$, then the pair $(X,D)$ is terminal in a neighbourhood of $x$;
    \item If $Z$ is a center of canonical singularity of $(X,D)$ containing $x$, then $Z\subseteq D$ and $Z$ has codimension 2 in $X$.
\end{enumerate}
\end{lem}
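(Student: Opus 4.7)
The plan is to reduce both parts to a single local discrepancy estimate, essentially \cite[3.14.1]{kol95}: on a smooth variety $X$ with $y\in X$, every divisor $E$ over $X$ whose center $Z:=c_X(E)$ contains $y$ satisfies
\[
a(E;X,D)\;\ge\;\mathrm{codim}_X(Z)-1-\mult_{y}D.
\]
The cited reference handles the case where $c_X(E)$ is a closed point of $X$; the general form follows by localizing at the generic point $\eta$ of $Z$, so that $\eta$ becomes the closed point of the regular local ring $\mathcal{O}_{X,\eta}$ of dimension $\mathrm{codim}_X(Z)$, and then using $\mult_\eta D\le\mult_y D$ by upper semicontinuity of multiplicity under the specialization $\eta\rightsquigarrow y$.

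For part (1), the hypothesis $\mult_x D<1$ together with upper semicontinuity gives an open neighbourhood $U\ni x$ on which $\mult_y D<1$ for every $y\in U$. For any exceptional divisor $E$ over $X$ with $c_X(E)\cap U\neq\emptyset$, pick $y\in c_X(E)\cap U$; since $E$ is exceptional and $X$ is normal we have $\mathrm{codim}_X(c_X(E))\ge 2$, so the displayed estimate yields
\[
a(E;X,D)\;\ge\;2-1-\mult_y D\;>\;0,
\]
showing $(X,D)$ is terminal on $U$.

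For part (2), let $E$ be an exceptional divisor with $c_X(E)=Z$ and $a(E;X,D)=0$ (under the convention of \cite[Definition 2.34]{km98} that centers of canonical singularity come from exceptional divisors, whence $\mathrm{codim}_X(Z)\ge 2$). I first show $Z\subseteq D$: if there were a point $y\in Z$ not lying on $\Supp D$, then $\mult_y D=0<1$, and part (1) would force $(X,D)$ to be terminal near $y$, contradicting $a(E;X,D)=0$ since $y\in c_X(E)$. Next, applying the base estimate at $y=x$ gives
\[
0\;=\;a(E;X,D)\;\ge\;\mathrm{codim}_X(Z)-1-\mult_x D\;\ge\;\mathrm{codim}_X(Z)-2,
\]
so $\mathrm{codim}_X(Z)\le 2$, which combined with $\mathrm{codim}_X(Z)\ge 2$ gives $\mathrm{codim}_X(Z)=2$.

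The only substantive step is setting up the base inequality at arbitrary codimension (rather than just at closed points, where Kollár's reference gives it directly); once this is established, both parts follow routinely by combining the estimate with upper semicontinuity of multiplicity, and in part (2) by using the detour through part (1) to force $Z\subseteq D$ before squeezing the codimension.
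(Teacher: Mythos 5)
The central difficulty is your ``base estimate'' $a(E;X,D)\ge \mathrm{codim}_X(Z)-1-\mult_y D$. What \cite[3.14.1]{kol95} gives (and what the paper's Lemma~\ref{lem:klt} quotes it for) is only the qualitative statement: $\mult_x D<1$ implies $(X,D)$ is terminal near $x$, and $\mult_x D\le 1$ implies canonical. Your inequality is strictly stronger as soon as $\mathrm{codim}_X(Z)\ge 3$: taking $\mult_x D=1$ it asserts $a(E;X,D)\ge\mathrm{codim}_X(Z)-2>0$, which does \emph{not} follow from canonicity. Part~(1) of your argument only invokes the estimate through the chain $\mathrm{codim}_X(Z)\ge 2\Rightarrow a\ge 1-\mult_y D>0$, i.e.\ only the codimension-two case, which is genuinely equivalent to the Koll\'ar statement, so that part is fine. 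But your part~(2) hinges precisely on the stronger higher-codimension inequality to squeeze $\mathrm{codim}_X(Z)\le 2$; that step is exactly what the cited reference does not provide.

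The inequality is in fact true once you assume $\mult_x D\le 1$, but it requires its own argument, and that argument is essentially the content of the paper's proof: either cut by general hyperplanes to reduce $\dim Z$, or (for point centers) induct along the Koll\'ar--Mori blowup sequence of $E$ \cite[Lemma 2.45]{km98}, using at each step that multiplicities of strict transforms remain $\le 1$. Without the hypothesis $\mult_y D\le 1$ your estimate is false: on $X=\bA^2$ take $D=2L$ with $L$ a line, blow up the origin and then the point where $E_1$ meets the strict transform of $L$; one finds $a(E_2;X,D)=-2$ while $\mathrm{codim}-1-\mult_0D=-1$. So the inequality cannot be stated as an unconditional consequence of a citation; it must be proved, and the hypothesis $\mult\le 1$ must appear in its statement. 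Once you supply that inductive proof, your localization at the generic point of $Z$ and the rest of the deduction for both parts go through, and the resulting argument is a clean repackaging of the paper's approach into a single quantitative lemma rather than the paper's case split between $\dim Z\ge 1$ (hyperplane cuts) and $\dim Z=0$ (positivity of the discrepancy).
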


\begin{proof}
We prove by induction on $n$, the dimension of $X$. In the surface case, this follows from \cite[Theorem 4.5]{km98}, so we may assume $n\ge 3$.

Suppose $(X,D)$ is not terminal, hence has non-positive discrepancy along an exceptional divisor $E$ whose center $Z$ on $X$ contains $x$. If $\dim Z\ge 1$, let $H\subseteq X$ be a general hyperplane section (not necessarily containing $x$) and let $\pi:Y\rightarrow X$ be a log resolution of $(X,D)$ such that $E$ is $\pi$-exceptional. Since $H$ is general, this is also a log resolution of $(X,D+H)$ and $H'=\pi^*H$ is smooth. We may write $K_Y+H'=\pi^*(K_X+D+H)+a(E;X,D)E+\Delta$ where $E\not\subseteq\Supp\Delta$. By adjunction we get
\[K_{H'}=\pi^*(K_H+D|_H)+a(E;X,D)(E\cdot H')+(\Delta\cdot H).\]
As $\dim Z\ge 1$ and $H$ is general, $(E\cdot H')=(E\cdot \pi^*H)\neq 0$ is smooth, $\pi|_{H'}$-exceptional and is different from any component of $(\Delta\cdot H)$. By induction hypothesis, $(H,D|_H)$ is canonical and is terminal if $\mathrm{mult}_{x}D<1$. It follows that $a(E;X,D)\ge 0$ and it is strictly positive if $\mathrm{mult}_{x}D<1$. Moreover, if $a(E;X,D)=0$ then $Z\cap H=\pi(E\cap H')$ is a center of canonical singularity of $(H,D|_H)$. By induction hypothesis, $Z\cap H$ has codimension two in $H$, hence $\mathrm{codim}_X Z=2$ as well. It is clear that $Z\subseteq D$.

If $\dim Z=0$ then $Z$ is the point $x$. By \cite[Lemma 2.45]{km98}, $E$ is obtained by successive blowups of its center. In other words, there exists a sequence $X_0=X$, $Z_{i}=\mathrm{Center}_{X_i}(E)$ and $X_{i+1}=\mathrm{Bl}_{Z_i}X_i$ for $i=0,1,\cdots,m$ such that $Z_m$ is a divisor. Let $D_i$ be the strict transform of $D$ on $X_i$ and $E_i$ be the excpetional divisor of $f_i:X_i\rightarrow X_{i-1}$. As $Z_0={x}$, it is not hard to see that $\mathrm{mult}_{y}D_1\le \mathrm{mult}_{x}D\le 1$ for all $y\in E_1$ and we have
\[K_{X_1}+D_1=f_1^*(K_X+D)+aE_1\]
where $a=n-1-\mathrm{mult}_{x}D\ge 2-\mathrm{mult}_{x}D>0$. By induction on $m$, we may assume $(X_1,D_1)$ is canonical, hence as $a>0$, we have $a(E;X,D)>a(E;X_1,D_1)\ge 0$, thus $Z$ is not a center of canonical singularity. This proves the lemma.
\end{proof}

The claim that apppears in the above proof of Theorem \ref{main:P^n} can be rephrased as follows:

\begin{lem} \label{lem:inequality}
Let $x$ be a smooth point of $X$ and $\nu$ a divisorial valuation over $X$ whose center has codimension at least $2$ in $X$ and contains $x$, then we have
\[\nu(\mathfrak{m}_x)\mult_x\le\nu\le a(\nu)\mult_x\]
where $a(\nu)$ is the discrepancy of $\nu$.
\end{lem}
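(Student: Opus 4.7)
Both inequalities should be interpreted as statements about valuations on $\mathcal{O}_{X,x}$: for every nonzero $f$ in the local ring I claim $\nu(\fm_x)\cdot\mult_x(f)\le\nu(f)\le a(\nu)\cdot\mult_x(f)$. The lower bound is essentially formal: if $k=\mult_x(f)$ then $f\in\fm_x^k$, and since $\fm_x^k$ is generated by $k$-fold products of elements of $\fm_x$ while $\nu$ is a valuation, $\nu(f)\ge\nu(\fm_x^k)\ge k\cdot\nu(\fm_x)$.

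The upper bound is the substantive part and essentially abstracts the claim proved inside the second proof of Theorem \ref{main:P^n}. The plan is to mimic that argument with the right scaling. Choose a projective birational morphism $\pi:Y\rightarrow X$ from a normal variety on which a prime divisor $E$ representing $\nu$ appears, and set $a=a(\nu)=a(E;X,0)$, so that $K_Y=\pi^*K_X+aE+(\text{other exceptional divisors})$. The hypothesis that the center of $\nu$ has codimension $\ge 2$ in $X$ ensures $E$ is $\pi$-exceptional, and since $X$ is smooth near the point $x$ in that center we have $a>0$.

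Now suppose for contradiction that $\nu(f)>a\cdot\mult_x(f)$ for some nonzero $f\in\mathcal{O}_{X,x}$, and write $d=\nu(f)$. Consider the $\bQ$-divisor $D=\tfrac{a}{d}V(f)$ on a neighborhood of $x$. Then $\mult_x D=\tfrac{a}{d}\mult_x(f)<1$, so Lemma \ref{lem:terminal}(1) tells us $(X,D)$ is terminal in a neighborhood of $x$. On the other hand $\nu(D)=\tfrac{a}{d}\cdot d=a$, whence $a(E;X,D)=a-\nu(D)=0$; since the center of $E$ contains $x$, this contradicts terminality. I do not foresee a serious obstacle here: the only subtlety is choosing the dilation factor $a/d$ rather than the $1/d$ that appears in the original claim (which implicitly handled only $a=1$), so that the discrepancy of $E$ with respect to $(X,D)$ becomes exactly zero. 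The codimension $\ge 2$ assumption is essential in two ways: it guarantees $a(\nu)>0$ so the upper bound is meaningful, and it ensures $E$ is extracted as a genuinely exceptional divisor over a neighborhood of the smooth point $x$, which is where Lemma \ref{lem:terminal} is applied.
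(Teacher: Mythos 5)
Your argument is correct and is the same as the paper's, which treats this lemma as a rephrasing of the claim inside the second proof of Theorem~\ref{main:P^n}: scale $V(f)$ so that $E$ picks up discrepancy exactly zero and then contradict Lemma~\ref{lem:terminal}(1). One small clarification: in that claim the paper's $E$ is the relative canonical divisor $K_X-\pi^*K_{\bP^n}$ (so already weighted by the discrepancy), and hence its $1/d$ coincides with your $a/\nu(f)$ --- the original claim is not tacitly restricted to $a=1$.
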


This is actually a refinement of the well-known Izumi-type inequality over a smooth point (see e.g. \cite[Theorem 2.6]{izumi-1} and \cite[Proposition 5.10]{izumi-2}), where the previous known bound on the right hand side of the inequality is $A(\nu)\mult_x$ (here $A(\nu)=1+a(\nu)$ is the log discrepancy of $\nu$). Note that the inequality is now optimal since we have an equality when $X$ is a surface and $\nu=\mult_x$. 

While such an inequality is sufficient for characterization of $\bP^n$, it is not enough for studying varieties $X$ coming from a blowup of $\bP^n$ with $\epsilon_m(-K_X)=n$. From the proof of Theorem \ref{main:P^n}, we see that we need to analyse the case when the constant in Izumi's inequality is optimal. This will be the goal of this section.

We first introduce some notations. Let $X$ be a smooth variety (not necessarily projective throughout this section). By \cite[Lemma 2.45]{km98}, every exceptional
divisor $E$ over $X$ with center $Z$ can be obtained by successive
blowups of its center, i.e. there exists a finite sequence $X_{0}=X$,
$Z_{0}=Z$, $X_{i}=\mathrm{Bl}_{Z_{i-1}}X_{i-1}$, $Z_{i}=\mathrm{Center}_{X_{i}}(E)$
($i=1,\cdots,m$) such that $Z_{m}$ is a divisor on $X_{m}$. We
call $m$ the length of $E$ over $X$. As before, let $\mathcal{I}_{k,E}=\{f\in\mathcal{O}_{X}|\nu_{E}(f)\ge ak\}\subseteq\mathcal{O}_{X}$
where $\nu_{E}$ is the divisorial valuation corresponding to $E$
and $a=a(E;X,0)$. We will simply write $\mathcal{I}_k$ if the choice of $E$ is clear.

\begin{lem} \label{lem:subpair}
Let $(X,\Delta=A-B)$ be a subpair where $X$
is smooth, $A,B$ are effective and have no common components. Let
$E$ be an exceptional divisor of length $m$ over $X$ and $x$ a general point of its center $Z$. Suppose that $\mathrm{mult}_{x}A\le1+a$
and $\mathrm{mult}_{x}B\ge b\ge(m-1)a$, then $a(E;X,\Delta)\ge b-ma$.
\end{lem}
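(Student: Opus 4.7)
The plan is to induct on the length $m$ using the successive-blowup realization of $E$. Set $c := \mathrm{codim}_X Z \ge 2$, let $\pi_1 \colon X_1 = \mathrm{Bl}_Z X \to X$ be the blowup of $Z$ with exceptional divisor $F_1$, and let $A_1, B_1$ denote the strict transforms of $A, B$. The discrepancy formula for a smooth blowup yields
\[
K_{X_1} + A_1 - B_1 - \gamma F_1 = \pi_1^*(K_X + \Delta), \qquad \gamma := (c-1) - \mathrm{mult}_Z A + \mathrm{mult}_Z B,
\]
so $(X_1, A_1 - B_1 - \gamma F_1)$ is crepant to $(X, \Delta)$ and $a(E; X, \Delta) = a(E; X_1, A_1 - B_1 - \gamma F_1)$. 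Since $x$ is general in $Z$, the hypotheses translate to $\mathrm{mult}_Z A \le 1+a$ and $\mathrm{mult}_Z B \ge b$, giving $\gamma \ge (c-1) - (1+a) + b \ge b - a$. The base case $m = 1$ is immediate: here $E = F_1$ and $a(E;X,\Delta)=\gamma\ge b-a=b-ma$.

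For the inductive step ($m \ge 2$), I would first assume $b \ge 0$; this is without loss of generality since replacing $b$ with $\max(b,0)$ only strengthens the conclusion while preserving the hypothesis $b \ge (m-1)a$. Together with $b \ge (m-1)a$ this forces $\gamma \ge b - a \ge 0$ (when $a \ge 0$, $b - a \ge (m-2)a \ge 0$; when $a \le 0$, $b - a \ge b \ge 0$). Hence $B' := B_1 + \gamma F_1$ is a genuine effective divisor with no common component with $A_1$, and $(X_1, A_1 - B')$ is a legitimate subpair on $X_1$. Over $X_1$ the exceptional divisor $E$ has length $m-1$ with center $Z_1$, and I would apply the inductive hypothesis at a general point $x_1 \in Z_1$ with the same parameter $a$ and the updated bound $b' := b-a$, which satisfies $b' \ge (m-2)a$ by the assumption on $b$.

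To verify the multiplicity hypotheses at $x_1$: properness of $\pi_1$ together with the uniqueness of the center of the valuation $\nu_E$ forces $\pi_1(Z_1) = Z$, so $y := \pi_1(x_1)$ is a general point of $Z$ and $\mathrm{mult}_y A = \mathrm{mult}_Z A \le 1+a$. Combining this with the standard multiplicity-decrease property of strict transforms under the blowup of a smooth center --- which can be established by intersecting $A$ with a generic line through $y$ in the direction corresponding to $x_1 \in F_1|_y$ and applying the projection formula to compare intersection multiplicities on $X$ and $X_1$ --- yields $\mathrm{mult}_{x_1} A_1 \le \mathrm{mult}_y A \le 1+a$. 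On the $B$-side, $\mathrm{mult}_{x_1} B' = \mathrm{mult}_{x_1} B_1 + \gamma \ge \gamma \ge b - a = b'$. The inductive hypothesis then delivers $a(E; X_1, A_1 - B') \ge b' - (m-1)a = b - ma$, closing the induction.

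The main technical obstacle is the strict-transform multiplicity bound $\mathrm{mult}_{x_1} A_1 \le \mathrm{mult}_y A$ at a general point of $Z_1$. It is classical, but invoking it cleanly matters because it is precisely the ingredient that connects the local geometry of $\pi_1$ to the inductive step, and because the statement requires that $Z_1$ dominate $Z$ (so that $y$ is really a general point), which is itself a consequence of the uniqueness of the valuative center.
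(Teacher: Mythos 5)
Your proof is correct and follows the same inductive skeleton as the paper's: in both cases one unrolls a single step of the successive-blowup realization of $E$, computes the discrepancy of the first exceptional divisor, and passes the bookkeeping to the one-shorter length. The differences are organizational rather than conceptual. The paper runs a double induction on $\dim X$ and $m$, first cutting by general hyperplane sections through $x$ until $Z$ becomes a point and only then blowing up, so that every blowup it performs has a zero-dimensional center and the smooth-blowup discrepancy formula is automatic. You instead blow up $Z$ directly and run a single induction on $m$; the numbers match (your $\gamma = c-1 - \mult_Z A + \mult_Z B$ is the paper's $a_1 = n-1 - \mult_x A + \mult_x B$ once the ambient dimension has been cut down to the codimension $c$), but you should make explicit that you are localizing at a general point of $Z$ where $Z$ is smooth: the global $\mathrm{Bl}_Z X$ need not satisfy $K_{X_1} = \pi_1^*K_X + (c-1)F_1$ if $Z$ is singular, and the successive-blowup realization from \cite[Lemma 2.45]{km98} does not guarantee smooth centers. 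Since the discrepancy $a(E;X,\Delta)$ is a local invariant and $Z$ is smooth near $x$, this is easily repaired, but the paper's hyperplane reduction sidesteps the issue automatically. Two further small points: you carry $B_1$ into the new boundary $B' = B_1 + \gamma F_1$ rather than discarding it as the paper does — both are fine since $B_1 \ge 0$; and your observation that one may assume $b \ge 0$ without loss is a welcome clarification of a step the paper leaves implicit (its assertion "$a_1 \ge 0$ by assumption" needs this, or the effectivity of $B$, when $a$ could be negative). The strict-transform multiplicity bound you flag is indeed the same classical input the paper invokes silently.
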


\begin{proof}
We prove by induction on both $\dim X$ and $m$. Suppose $\dim Z\ge1$,
let $H\subseteq X$ be a general hyperplane section containing $x$
and $E',A',B',\Delta'$ the corresponding restriction to $H$. The
assumption on multiplicity doesn't change as $x$ is a general point
in $Z$. By induction hypothesis we then have $a(E;X,\Delta)=a(E';H,\Delta')\ge b-ma$.

Hence we may assume $Z$ is the point $x$. Let $\phi:X_{1}\rightarrow X$
be the blowup of $x$ with exceptional divisor $E_{1}$ and let $A_{1}$,
$B_{1}$ be the strict transform of $A$,\textbf{ $B$}. We have
\[
K_{X_{1}}+A_{1}-B_{1}\sim_{\mathbb{Q}}\phi^{*}(K_{X}+A-B)+a_{1}E_{1}
\]
where $a_{1}=n-1-\mathrm{mult}_{x}A+\mathrm{mult}_{x}B\ge b-a$. If
$m=1$ then $E=E_{1}$ and $a(E;X,\Delta)=a_{1}\ge b-a$ as required.
Suppose $m>1$, then $a_{1}\ge0$ by assumption, and we have $a(E;X,\Delta)=a(E;X_{1},A_{1}-B_{1}-a_{1}E_{1})\ge a(E;X_{1},A_{1}-a_{1}E_{1})$.
Since for any $y\in E_{1}$ we also have $\mathrm{mult}_{y}A_{1}\le\mathrm{mult}_{x}A\le1+a$
and $\mathrm{mult}_{x}(a_{1}E_{1})=a_{1}\ge b-a$, by induction hypothesis
we get $a(E;X_{1},A_{1}-a_{1}E_{1})\ge b-a-(m-1)a\ge b-ma$. This
completes the proof.
\end{proof}

\begin{lem} \label{lem:izumi}
Let $X$ be a smooth variety, $E$ a divisor over $X$ with center $Z$ and length $m$ and $x$ a general point in $Z$. Suppose $Z$ has codimension at least 3. Then $\mathcal{I}_k=\mathcal{I}_{k,E}\subseteq\mathfrak{m}_{x}^{\lambda k}$ where $\lambda=1+\frac{1}{m}$.
\end{lem}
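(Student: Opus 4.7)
The plan is to argue by contradiction, upgrading the qualitative claim in the proof of Theorem~\ref{main:P^n} to a quantitative bound by invoking Lemma~\ref{lem:subpair} in place of Lemma~\ref{lem:terminal}. I first reduce to $\dim Z \geq 1$: if $Z = \{x\}$, I replace $(X, E, x)$ by $(X \times \bA^1, E \times \bA^1, (x, 0))$. Since blow-ups commute with products for smooth centers, the product tower $X_i \times \bA^1$ realizes $E \times \bA^1$ as an exceptional divisor over $X \times \bA^1$ of the same length $m$ and the same discrepancy $a$, with center $Z \times \bA^1$ now positive-dimensional and still of codimension $\geq 3$. For $f$ pulled back from $X$ one has $\nu_{E \times \bA^1}(f) = \nu_E(f)$ and $\mult_{(x,0)}(f) = \mult_x(f)$, so the conclusion on the product implies the conclusion on $X$.

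Assume now $\dim Z \geq 1$ and suppose for contradiction that some local $f$ at $x$ satisfies $\nu_E(f) \geq a k$ while $\mu := \mult_x f < \lambda k$. Put $D := (\lambda/\mu)V(f)$, so that $\mult_x D = \lambda = 1 + 1/m$ and $\nu_E(D) = \lambda \nu_E(f)/\mu > \lambda a k /(\lambda k) = a$, i.e.\ $a(E;X,D) < 0$. Since $\dim Z \geq 1$ I can choose a general $g \in \fm_x \setminus \fm_x^2$ with $V(g) \not\supset Z$ and sharing no component with $V(f)$; the hyperplane $G := V(g)$ then satisfies $\mult_x G = 1$ and $\nu_E(G) = 0$, the latter because $g$ is a unit at the generic point of $Z$ and hence along $E$. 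I then apply Lemma~\ref{lem:subpair} to $(X, D - G)$ with $b = 1$ and with the parameter ``$a$'' of that lemma set equal to $1/m$: the hypotheses $\mult_x D \leq 1 + 1/m$ and $\mult_x G = 1 \geq (m-1)/m$ are satisfied, yielding
\[
a(E; X, D - G) \geq b - m \cdot \tfrac{1}{m} = 0,
\]
which contradicts the direct computation $a(E; X, D - G) = a - \nu_E(D) + \nu_E(G) = a(E;X,D) < 0$.

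The principal obstacle is the case $Z = \{x\}$: there no $g \in \fm_x$ has $\nu_E(g) = 0$, and substituting the minimum value $\nu_E(\fm_x) > 0$ introduces an unrecoverable error term proportional to it that exactly spoils the bound $\lambda = 1 + 1/m$. The $\bA^1$-trick in the first paragraph is precisely the device that absorbs this defect, by producing a positive-dimensional center along which the new coordinate $t$ is a unit for the extended valuation.
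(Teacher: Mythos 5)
Your reduction to a positive-dimensional center via the $\bA^1$-trick is clean and correct as stated: lengths, discrepancies, valuations, and multiplicities are all preserved for pulled-back functions, so if the lemma holds for $(X\times\bA^1, E\times\bA^1)$ then it holds for $(X,E)$. The problem is the application of Lemma~\ref{lem:subpair} that follows. That lemma requires $\mult_x B\ge b$ at a \emph{general} point $x$ of the center $Z$, and this forces $Z\subseteq\mathrm{Supp}\,B$ (a divisor that passes through a dense set of points of $Z$ contains $Z$), hence $\nu_E(B)>0$. Your $G$ is chosen precisely so that $\nu_E(G)=0$, which forces $V(g)\not\supset Z$; consequently $\mult_y G=0$ for a general $y\in Z$, and the hypothesis of Lemma~\ref{lem:subpair} with $b=1$ fails. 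In short, $\nu_E(G)=0$ and $\mult_x G\ge 1$ at a general point of $Z$ are incompatible, and the $\bA^1$-trick does not resolve this tension --- it just moves it: on $X\times\bA^1$ the new coordinate hyperplane $V(t)$ meets $Z\times\bA^1$ in a proper closed subset.

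You can see the failure concretely inside the proof of Lemma~\ref{lem:subpair}. The inductive step cuts by a general hyperplane $H$ through $x$ and uses the adjunction $a(E;X,\Delta)=a(E|_H;H,\Delta|_H)$. If $x$ is a \emph{special} point where $G$ happens to pass (but $G\not\supset Z$), then after restriction $G|_H$ contains the zero-dimensional center $Z\cap H$, so $\nu_{E|_H}(G|_H)>0$ even though $\nu_E(G)=0$; the adjunction identity then breaks, and the inequality you would deduce from the base case bounds $a(E|_H;H,\Delta|_H)$, not $a(E;X,\Delta)$. This is exactly the quantity that generality of $x$ is invoked to control in the paper's proof of Lemma~\ref{lem:subpair}.

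The paper's proof sidesteps this by going the opposite way: it first reduces to $Z=\{x\}$ by general hyperplane sections, then blows up $x$ once. The negative part it feeds into Lemma~\ref{lem:subpair} is $a_1E_1$, where $E_1$ is the new exceptional divisor; crucially, $E_1$ \emph{contains} the new center of $E$ on $X_1$, so $\mult_y(a_1E_1)=a_1$ holds at every point of that center, including a general one. The codimension hypothesis $\mathrm{codim}\,Z\ge 3$ is exactly what makes $a_1=n-1-\mult_x D>1-\tfrac1m$ big enough, and then Lemma~\ref{lem:subpair} applies with the shorter length $m-1$. To salvage your approach you would need a substitute for $G$ that both contains $Z$ (so the multiplicity condition holds at a general point) and has $\nu_E=0$, and no such divisor exists because the generic point of $Z$ is the center of $\nu_E$.
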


\begin{proof}
By taking general hyperplane sections it suffices to consider the
case when $Z$ is a point and $X$ is affine. Suppose $\mathcal{I}_{k}\not\subseteq\mathfrak{m}_{x}^{\lambda k}$,
then there exists $f\in\mathcal{O}_{X}$ with $\nu_{E}(f)\ge ak$
and $\mathrm{mult}_{x}f<\lambda k$. Let $D_{0}$ be the zero locus
of $f$ and $D=\frac{1}{k}D_{0}$, we then have $\mathrm{mult}_{x}D<\lambda=1+\frac{1}{m}$
and $a(E;X,D)\le0$. Let $\phi:X_{1}\rightarrow X$ be the blow up
of $x$, $E_{1}$ the exceptional divisor and $D_{1}$ the strict
transform of $D$, we have $K_{X_{1}}+D_{1}-a_{1}E_{1}\sim_{\mathbb{Q}}\phi^{*}(K_{X}+D)$
where $a_{1}=n-1-\mathrm{mult}_{x}D>1-\frac{1}{m}$ (since $n\ge3$
by assumption) and $\mathrm{mult}_{y}D_{1}\le\mathrm{mult}_{x}D<1+\frac{1}{m}$ for all $y\in E_1$.
Since $E$ has length $m-1$ over $X_{1}$, we may apply Lemma \ref{lem:subpair}
to the subpair $(X_{1},\Delta_{1}=D_{1}-a_{1}E_{1})$ to get $a(E;X,D)=a(E;X_{1},\Delta_{1})>1-\frac{1}{m}-(m-1)\cdot\frac{1}{m}=0$,
a contradiction. Hence $\mathcal{I}_{k}\subseteq\mathfrak{m}_{x}^{\lambda k}$
holds.
\end{proof}

\begin{lem} \label{lem:surface}
Let $X=\mathbb{A}_{K}^{2}$ where $K$ is a $($not
necessarily algebraically closed$)$ field of characteristic zero and
$\nu$ a divisorial valuation centered over $x=(0,0)$. Define $\mathcal{I}_{k}\subseteq\mathcal{O}_{X}$
as before. Suppose for all $\lambda>1$, we have $\mathcal{I}_{k}\not\subseteq\mathfrak{m}_{x}^{\lambda k}(k\gg0)$,
then $\nu$ is a monomial valuation given by $\nu(s)=1$, $\nu(t)=m$
under certain coordinate $s,t$ of $X$.
\end{lem}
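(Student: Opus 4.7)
The overall plan is to reinterpret the hypothesis as the asymptotic tightness of the Izumi-type upper bound (Lemma \ref{lem:inequality}), extract a smooth germ of a curve realizing this bound, and then induct on the length of $\nu$ (the number of successive point blowups required to realize $E_\nu$).

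First, observe that the hypothesis $\mathcal{I}_k \not\subseteq \mathfrak{m}_x^{\lambda k}$ for all $\lambda > 1$ and $k \gg 0$ is equivalent to
\[
\sup_{f \in \mathfrak{m}_x \setminus \{0\}} \frac{\nu(f)}{\mult_x f} = a := a(\nu),
\]
so the Izumi upper bound $\nu(f) \le a \cdot \mult_x f$ is asymptotically sharp. Using $\mathrm{char}\,K = 0$ to factor initial forms of near-extremal $f$ into linear factors in $\bP(\mathfrak{m}_x/\mathfrak{m}_x^2) \cong \bP^1_K$, together with the compactness of $\bP^1_K$ and the discreteness of the value group of the divisorial valuation $\nu$, I would extract a smooth germ $C = (f_0 = 0)$ through $x$ realizing this bound \emph{exactly}, meaning $\nu(f_0) = a$ and $\mult_x f_0 = 1$. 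The rough picture is that the initial factorizations of near-extremal $f$ concentrate along a single tangent direction $\ell_0 \in \bP^1_K$ (the one whose formal branches have $\nu$-value approaching $a$), and the divisorial/discreteness hypothesis upgrades this limit to exact attainment by a single smooth branch.

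Next I proceed by induction on the length $m$ of $\nu$. The base case $m = 1$ is immediate since $\nu = \mathrm{ord}_x$ is monomial with weights $(1,1)$. For the inductive step, choose local coordinates $(s, t)$ at $x$ with $t = f_0$ so that $\nu(t) = a$, blow up $x$ to obtain $\pi \colon X_1 \to X$ with exceptional divisor $E_1$, and let $x_1 \in E_1$ be the center of $\nu$ on $X_1$. In the chart with coordinates $(s, u)$, $u = t/s$, and $E_1 = (s = 0)$, we have $\pi^*t = s \cdot u$ and hence $\nu(t) = \nu(s) + \nu(u)$. Combining the discrepancy formula $a(\nu; X, 0) = a(\nu; X_1, 0) + \nu(\mathfrak{m}_x)$ with Lemma \ref{lem:inequality} applied on $(X_1, x_1)$, one shows $\nu(\mathfrak{m}_x) = 1$ and $x_1 = E_1 \cap \widetilde{C}$, which in turn forces $\nu(s) = 1$ and $\nu(u) = a - 1 = a(\nu; X_1, 0)$. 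Now $u \in \mathfrak{m}_{x_1}$ has multiplicity one and realizes the Izumi bound at $x_1$, so the hypothesis of the lemma transfers to $(X_1, x_1)$. By induction, $\nu$ is monomial near $x_1$ in some coordinates $(s', u')$ with weights $(1, a - 1)$; setting $t' := s' u'$ (a regular function on $X$, since it extends across $E_1$) yields monomial coordinates $(s', t')$ at $x$ with weights $(1, a) = (1, m)$, as required.

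The principal obstacle is the first step: ensuring that the supremum in the Izumi bound is attained by an actual smooth germ and not merely in the limit. One has to exploit the compactness of $\bP^1_K$ together with the fact that $\nu$ is divisorial (hence has a discrete value group and a finite underlying blowup combinatorics) to convert the asymptotic statement into exact attainment, and also to verify that the extremal tangent direction $\ell_0$ is defined over $K$ (this is where $\mathrm{char}\,K = 0$ is used). Once the extremal smooth germ $C$ is in hand, the remaining induction is a relatively mechanical unwinding of the blowup geometry.
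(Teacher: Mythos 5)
Your overall strategy (reinterpret the hypothesis as exact attainment of the Izumi bound by a $K$-rational smooth germ $f_0$, then peel off one blowup at a time) is genuinely different from the paper's proof, which works over $\bar{K}$ via the Favre--Jonsson Puiseux-series parametrization and then handles the descent to $K$ by a separate Galois-conjugate/weighted-degree argument. Your inductive step is essentially sound in spirit (it recovers, step by step, the blowup description that the paper records in its Lemma \ref{lem:equality}), although it needs more care than you give it: the assertion $\nu(\mathfrak{m}_x)=1$ does not follow from the discrepancy formula plus Lemma \ref{lem:inequality} alone --- e.g.\ for the monomial valuation $\nu(s)=2,\nu(t)=3$ the discrepancy is $4$ but $\nu(\mathfrak{m}_x)=2$; one must actually use the smooth germ $f_0$ to rule out satellite points in the blowup sequence, which is where $\nu(\mathfrak{m}_x)=1$ really comes from. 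Similarly, ``setting $t':=s'u'$'' does not automatically produce a regular function on $X$: you must arrange the coordinates on $X_1$ so that $u'$ is (a unit times) the strict transform of $f_0$, at which point the descent works.

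The genuine gap is your first step. You correctly reformulate the hypothesis as $\sup_{f\in\mathfrak{m}_x}\nu(f)/\mult_x f = a(\nu)$, but then assert that this supremum is attained \emph{exactly}, by a \emph{smooth} germ, \emph{defined over $K$}, with only a gesture toward ``compactness of $\bP^1_K$'' and ``char $0$''. None of the three claims is routine. (a) Attainment: the ratios $\nu(f)/\mult_x f$ live in $\bQ$ with unbounded denominators, so $\sup=\limsup$ is not automatically a maximum; a finite-generation argument for the graded family $\{\mathcal{I}_k\}$ could supply attainment, but you do not give it. (b) Smoothness: singular irreducible germs can attain the bound --- for the monomial valuation $\nu(s)=1,\nu(t)=m$, the curve $t^p-s^{pm+1}$ has multiplicity $p$ and ratio exactly $m$ --- so ``attained by an irreducible factor'' does not yield ``attained by a smooth germ''. (c) $K$-rationality: this is where the real arithmetic content lies, and it is \emph{not} a consequence of char $0$. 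Over $\bar{K}$ the hypothesis is satisfied by every monomial valuation $\nu(s')=1,\nu(t')=m$, but only those whose monomializing coordinate change $t'=t-f(s)$ has $f\in K[s]$ satisfy it over $K$; ruling out $f\in\bar K[s]\setminus K[s]$ is precisely the content of the paper's second claim, proved via a Galois-conjugate argument with weighted degrees, and your proposal has no substitute for it. Since step (1) silently absorbs most of the lemma's difficulty, the proposal as written is not a proof.
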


\begin{proof}
First assume $K=\bar{K}$. Choose coordinate $s,t$ such that $\nu(t)\ge\nu(s)=N>0$. 
By \cite[Proposition 4.1]{fj04}, there exists
a finite generic Puiseux series, i.e. some $\phi(s)=\sum_{i=1}^{r}a_{i}s^{\beta_{i}}+\xi\cdot s^{\beta}$ (where $\xi$ is an indeterminate that can be viewed as the local
coordinate on $E$ and $1\le\beta_{1}<\cdots<\beta_{r}<\beta$ satisfy
$N\beta_{i}\in\mathbb{Z}$ and $N\beta\in\mathbb{Z}$) such that $\nu(f)=N\cdot\mathrm{mult}_{s}f(s,\phi(s))$ (it is clear that for any finite generic Puiseux series given as above, $\nu(f)=N\cdot\mathrm{mult}_{s}f(s,\phi(s))$ defines a discrete valuation on $X$).

\begin{claim}
$\nu(f)\le N\beta\mathrm{mult}_{x}f$ for all $f\in K[s,t]$.
\end{claim}

\begin{proof}[Proof of claim]
Let $\hat{K}$ be th algebraic closure of $\mathbb{C}((s))$. Its
elements are finite or infinite Puiseux series of the form 
\[
\hat{\phi}(s)=\sum_{j\ge1}a_{j}s^{\hat{\beta}_{j}}\quad\mathrm{with}\quad a_{j}\in\mathbb{C}^{*},\hat{\beta}_{j+1}>\hat{\beta}_{j}\in\mathbb{Q}
\]
where the rational numbers $\hat{\beta}_{j}$ have bounded denominators.
By \cite[Proposition 4.1]{fj04} again the valuation $\nu$ extends to
a valuation on $\hat{K}[[t]]$ and it suffices to prove the claim
for those $f\in\hat{K}[[t]]$ that is linear on $t$. If $f\in\hat{K}$
then $\nu(f)=N\cdot\mathrm{mult}_{x}f$ and the claim follows since
$\beta\ge1$. If $f=t-\psi$ where $\psi\in\hat{K}$, then either
$\mathrm{mult}_{s}\psi<1$ and $\nu(f)=N\cdot\mathrm{mult}_{s}\psi\le N\beta\mathrm{mult}_{x}f$
as before (here we use the assumption that $\mathrm{mult}_{x}\phi\ge1$)
or $\mathrm{mult}_{x}f=1$ and $\nu(f)=N\cdot\mathrm{mult}_{s}(\phi-\psi)\le N\beta$.
Again the claim follows.
\end{proof}
Returning to the proof of the lemma. To compare $\mathcal{I}_{k}$
and $\mathfrak{m}_{x}^{\lambda k}$, we also need to compute the discrepancy
$a=a(E;X,0)$. For this we have
\[a=\nu(\mathrm{d}s\wedge\mathrm{d}t)=\nu(\mathrm{d}s\wedge\mathrm{d}\phi(s))=\nu(s^{\beta}\mathrm{d}s\wedge\mathrm{d}\xi)\ge\beta\nu(s)+\nu(\mathrm{d}s)\ge N\beta+N-1.\]
Combining this with the above claim we have $\mathcal{I}_{k}\subseteq\mathfrak{m}_{x}^{\mu k}$
where $\mu=1+\beta^{-1}(1-\frac{1}{N})$. By assumption $\mu\le1$,
hence $N=1$ and $\phi$ is a polynomial in $s$. After the change
of coordinate $s'=s$, $t'=t-\sum_{i=1}^{r}a_{i}s^{\beta_{i}}$, $\nu$
becomes the monomial valuation given by $\nu(s')=1$, $\nu(t')=\beta$. 

Now we treat the general case where $K$ may not be algebraically
closed. Let $K[X]=K[s,t]$ where $0<\nu(s)\le\nu(t)$. $\nu$ extends
to a divisorial valuation over $\bar{K}$ and let $\bar{\mathcal{I}}_{k}\subseteq\mathcal{\mathcal{O}}_{X_{\bar{K}}}$
be the corresponding ideal sheaf defined in the same way as $\mathcal{I}_{k}$.
By what we have shown above, there exists $f(s)\in\bar{K}[s]$ and
$m\in\mathbb{N}$ such that $f(0)=0$, $\deg f<m$ and $\nu$ is the
monomial valuation $\nu(s)=1$, $\nu(y)=m$ where $y=t-f(s)$. A direct
computation yields $\bar{\mathcal{I}}_{k}=(s^{m},y)^{k}$. If $m=1$,
then $f=0$, $y=t$ and $\nu$ is already a monomial valuation under
the original coordinate $(s,t)$. If $f(s)\in K[s]$, then $(s,y)$
is a new coordinate under which $\nu$ becomes monomial. So we may
assume $m\ge2$ and $f\not\in K[s]$.

\begin{claim}
Under these assumptions, there exists $\lambda>1$ such that $\mathcal{I}_{k}=\bar{\mathcal{I}}_{k}\cap K[s,t]\subseteq\mathfrak{m}_{x}^{\lambda k}$
for $k\gg0$.
\end{claim}

\begin{proof}[Proof of claim]
After a further change of coordinate (subtract the terms of $f(s)$
with coefficients in $K$ from $t$) we may assume $y=t-as^{l}+\mathrm{H.O.T.}$
where $l<m$ and $a\in\bar{K}\backslash K$. We then have $\bar{\mathcal{I}}_{k}=(s^{m},y)^{k}\subseteq(s^{l+1},t-as^{l})^{k}$,
hence it suffices to prove the claim when $f(s)=as^{m-1}$.

Let $s,t$ have weights $1,m-1$ respectively and let $\bar{K}[s,t]$
be graded according to weighted degrees of polynomials. In particular,
the generators $s^{m}$, $y$ of $\bar{\mathcal{I}}_{1}$ have weighted
degree $\mathrm{wt}(s^{m})=m$, $\mathrm{wt}(y)=m-1$, respectively.
Let $g(s,t)\in\bar{\mathcal{I}}_{k}\cap K[s,t]$. We may express $g$
as a sum $g=\sum g_{i}$ where $g_{i}$ is weighted homogeneous of
degree $i$. Let $g_{\le l}=\sum_{i\le l}g_{i}$. Since $\bar{\mathcal{I}}_{k}$
is generated by weighted homogeneous elements, we have $g_{\le l}\in\bar{\mathcal{I}}_{k}\cap K[s,t]$
for all $l\in\mathbb{Z}$ and $g_{\le l}$ is indeed contained in
the ideal generated by those $(t-as^{m-1})^{p}s^{m(k-p)}$ with weighted
degree $\le l$, i.e. $p\ge mk-l$. Suppose $l<mk$. It then follows
that $(t-as^{m-1})^{mk-l}$ divides $g_{\le l}$. Since $a\not\in K$
and $g_{\le l}\in K[s,t]$, the conjugates of $(t-as^{m-1})^{mk-l}$
also divides $g_{\le l}$. Hence if $g_{\le l}\ne0$ we have $l\ge\mathrm{wt}(g_{\le l})\ge2(m-1)(mk-l)$
and $l\ge\frac{2m(m-1)}{2m-1}k$. In other words we have $\nu_{0}(g)\ge\frac{2m(m-1)}{2m-1}k$
where $\nu_{0}$ is the monomial valuation given by $\nu_{0}(s)=1$,
$\nu_{0}(t)=m-1$. By Lemma \ref{lem:inequality} we have $\nu_{0}(g)\le(m-1)\mathrm{mult}_{x}(g)$ (the discrepancy of $\nu_{0}$ is $m-1$), hence $\mathrm{mult}_{x}(g)\ge\frac{2m}{2m-1}k$
and the claim follows by taking $\lambda=\frac{2m}{2m-1}>1$.
\end{proof}

Hence if $m\ge2$ and $f\not\in K[s]$ then the valuation $\nu$ does
not satisfy the assumption of the lemma. This concludes the proof.
\end{proof}

\begin{lem} \label{lem:equality}
Let $X$ be a smooth variety, $E$ an exceptinoal divisor over $X$ with center $Z$ and $x$ a general point in $Z$. Suppose for all $\lambda>1$,
we have $\mathcal{I}_{k}\not\subseteq\mathfrak{m}_{x}^{\lambda k}(k\gg0)$,
then $Z$ has codimension two in $X$ and $E$ is obtained as the
last exceptional divisor of a successive blowup $X_{m}\rightarrow\cdots\rightarrow X_{0}=X$
where the center $Z_{i}$ of each blowup $X_{i+1}\rightarrow X_{i}$
maps birationally to $Z$ and is not contained in the excpetional
divisors of $X_{i-1}\rightarrow X_{0}$.
\end{lem}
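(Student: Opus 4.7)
The plan is to first pin down the codimension of $Z$, then reduce to Lemma \ref{lem:surface} by localizing at the generic point of $Z$, and finally read off the blowup sequence from the resulting monomial coordinates. Since $E$ is exceptional, $\mathrm{codim}_X Z \ge 2$; if $\mathrm{codim}_X Z \ge 3$, Lemma \ref{lem:izumi} would give $\cI_k \subseteq \fm_x^{(1+1/m)k}$ for $k \gg 0$, contradicting the hypothesis. Hence $\mathrm{codim}_X Z = 2$, and I pick local coordinates $(x_1, x_2, x_3, \ldots, x_n)$ near $x$ with $Z = \{x_1 = x_2 = 0\}$.

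Next, I would localize $\cO_X$ at the generic point $\eta_Z$ of $Z$ to obtain a two-dimensional regular local ring $R$ over the residue field $K = k(Z)$ (of characteristic zero), with maximal ideal $\fm_R = (x_1, x_2) R$. The valuation $\nu_E$ restricts to a divisorial valuation on $R$ centered at $\fm_R$, and since $\nu_E(dx_j) = 0$ for $j \ge 3$, the discrepancy is unchanged: $a(\nu_E; \Spec R, 0) = a$. The hypothesis transfers because for any $f \in \cO_{X, x}$ one has $\mult_{\fm_R}(f) \le \mult_x(f)$: only powers of $x_1, x_2$ contribute to $\mult_{\fm_R}$, whereas $\mult_x$ also sees $x_3, \ldots, x_n$. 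Applying Lemma \ref{lem:surface} to $R$ (its proof adapts to any two-dimensional regular local ring over a characteristic-zero field) produces parameters $s = x_1$ and $t = x_2 - \phi(x_1)$ with $\phi \in K[x_1]$, $\phi(0) = 0$, such that $\nu_E$ is the monomial valuation determined by $\nu_E(s) = 1$ and $\nu_E(t) = m'$ for some positive integer $m'$. Clearing the denominators of $\phi$'s coefficients, this change of variables lifts to regular coordinates $(s, t, x_3, \ldots, x_n)$ on a neighbourhood $V$ of $x$ in $X$ with $Z \cap V = \{s = t = 0\}$.

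Finally, I would realize $E$ as the last exceptional divisor of an explicit successive blowup. Over $V$, $X_1 = \mathrm{Bl}_Z X$ has a chart with coordinates $(s, u_1, x_3, \ldots, x_n)$ satisfying $t = s u_1$, in which $E_1 = \{s = 0\}$ and the center of $\nu_E$ on $X_1$ is $Z_1 = \{s = u_1 = 0\}$, which is codimension two in $X_1$ and maps isomorphically to $Z$ via projection to $(x_3, \ldots, x_n)$. Iterating with $u_{i+1} = u_i/s$, each $X_{i+1} = \mathrm{Bl}_{Z_i} X_i$ admits a chart $(s, u_{i+1}, x_3, \ldots, x_n)$ in which $E_{i+1} = \{s = 0\}$ and $Z_{i+1} = \{s = u_{i+1} = 0\}$ whenever $i + 1 < m'$; after $m'$ steps, $\nu_E(u_{m'}) = 0$ and $E = E_{m'}$, which forces $m = m'$. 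In this chart, for each $j < i$ the strict transform of $E_j$ on $X_i$ lies in the complementary subchart of the blowup $X_{j+1} \to X_j$ (where $s$ has been written as a multiple of $u_j$ rather than the other way around), so $Z_i$ avoids the strict transforms of $E_1, \ldots, E_{i-1}$ on $X_i$, as required. The main technical point is the hypothesis transfer and the adaptation of Lemma \ref{lem:surface} to the local ring $R$ over $K = k(Z)$; both steps are routine once the framework is in place.
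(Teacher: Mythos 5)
Your proposal is correct and follows essentially the same route as the paper: use Lemma \ref{lem:izumi} to force $\mathrm{codim}_X Z = 2$, localize at the generic point of $Z$ to reduce to a two-dimensional regular local ring over the function field $K=k(Z)$, invoke Lemma \ref{lem:surface} to conclude the valuation is monomial in suitable coordinates, and then translate the resulting iterated point-blowup back to a blowup sequence over $X$. The extra care you take in verifying the hypothesis transfer (the inequality $\mult_{\fm_R}(f)\le\mult_x(f)$, which goes the right way) and in tracking the charts of the iterated blowup to see that each $Z_i$ misses the strict transforms of earlier exceptional divisors is a useful fleshing-out of steps the paper passes over in a single sentence, but it does not change the structure of the argument.
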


\begin{proof}
$Z$ has codimension two by Lemma \ref{lem:izumi}. The second statement
is local (in the analytic topology of $X$), so after localizing at the generic point $x$ of $Z$, we
may assume $X=\mathbb{A}_{K}^{2}$ where $K$ is the residue field
of $\mathcal{O}_{X,x}$ and $Z=\{x\}$. By Lemma \ref{lem:surface},
$\nu=\nu_{E}$ is a monomial valuation given by $\nu(s)=1$, $\nu(t)=m$
under certain coordinate $s,t$ of $X$. In other words, $E$ is obtained
by successively blowing up the intersection point of the strict transform
of the smooth curve $(t=0)$ and the last exceptional divisor. Translating
this to the origin variety $X$ gives the statement of the lemma.
\end{proof}

\section{Moving Seshadri constant on blowup of projective space} \label{sec:blowup}

We now take up the study of varieties $X$ with a birational contraction to $\bP^n$ such that $\epsilon_{m}(-K_{X})=n$. In particular we prove Theorem \ref{main:blowupPn}. We assume $n\ge3$ throughout this section.

%We may replace $X$ by a terminal modification and assume $X$ has terminal singularity. We then run a $K_{X}$-MMP $f:X\dashrightarrow Y$ to get a Mori fiber space $g:Y\rightarrow Z$ with $\epsilon_{m}(-K_{Y})\ge n$. If $Z$ is a point, then $Y$ is $\mathbb{Q}$-Fano and by \cite{lz} it is either $\mathbb{P}^{n}$ or one of the varieties listed in \cite[Theorem 3]{lz}. If $\dim Z\ge1$ then by Lemma \ref{lem:restrict_m} and \cite[Theorem 2]{lz} it is a curve and the general fiber of $g$ is isomorphic to $\mathbb{P}^{n-1}$. In this section we treat the first case, i.e. when $X$ admits a birational contraction to $\mathbb{P}^{n}$. Since moving Seshadri constant is preserved under small birational maps, we only describe $X$ up to isomorphism in codimension one. 

%Note that in the theorem we do not require $X$ to have terminal (or even klt) singularity and the blown-up hypersurface need not be irreducible or reduced.

We first establish an auxiliary lemma. Recall that the secant variety $S(Z)$ of $Z\subseteq\mathbb{P}^{n}$ is just the closure of the union of all secant lines (i.e. lines in $\mathbb{P}^{n}$ that intersect $Z$ at at least two points).

\begin{lem} \label{lem:secant}
Let $n\ge3$ and $Z\subseteq\mathbb{P}^{n}$ a
reduced subscheme of pure codimension two. Assume that $S(Z)\neq\mathbb{P}^{n}$,
then one of the following holds:
\begin{enumerate}
\item $Z$ is contained in a hyperplane;
\item There exists a linear subspace $V$ of codimension 3 such that every
irreducible component of $Z$ is a linear subspace containing $V$.
\end{enumerate}
\end{lem}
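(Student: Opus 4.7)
The plan is to assume $S(Z)\ne\mathbb{P}^n$ and that $Z$ is not contained in any hyperplane (otherwise (1) holds) and derive (2). Decompose $Z=\bigcup_{i} Z_i$ into irreducible components, each of dimension $n-2$. First I pick a general $p\in\mathbb{P}^n\setminus S(Z)$ and consider the linear projection $\pi_p:\mathbb{P}^n\dashrightarrow\mathbb{P}^{n-1}$. Since no secant line of $Z$ (and by a limit argument no tangent line at a smooth point of $Z$) passes through $p$, the restriction $\pi_p|_Z$ is an isomorphism onto its image $\overline Z\subseteq\mathbb{P}^{n-1}$, which is a hypersurface with components $\overline Z_i=\pi_p(Z_i)$. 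Bezout in $\mathbb{P}^{n-1}$ gives $\dim(\overline Z_i\cap\overline Z_j)\ge n-3$ for distinct $i,j$, and via the injectivity of $\pi_p$ this translates to $\dim(Z_i\cap Z_j)\ge n-3$; the opposite inequality is automatic from $Z_i\cap Z_j\subsetneq Z_i$, so $\dim(Z_i\cap Z_j)=n-3$ exactly.

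The main step is to show each $Z_i$ is a linear $\mathbb{P}^{n-2}$. Using the classical fact that any irreducible non-degenerate codim-two subvariety of $\mathbb{P}^n$ has full secant variety (via Zak's theorem on tangencies for $n\ge 4$, and a standard property of non-degenerate space curves for $n=3$), each $Z_i$ must lie in some hyperplane $H_i$. Since $Z$ is not in a common hyperplane, for each $Z_i$ there exists $Z_j\not\subseteq H_i$, so $H_i\ne H_j$. Terracini's lemma applied to this pair gives $\dim(T_v Z_i+T_w Z_j)\le\dim T_q S(Z)\le n-1$ for generic smooth $v\in Z_i$, $w\in Z_j$ and general $q\in\overline{vw}$, hence $\dim(T_v Z_i\cap T_w Z_j)\ge n-3$. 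A dimension count inside $H_i\cap H_j\cong\mathbb{P}^{n-2}$ identifies $T_v Z_i\cap H_j$ as a $\mathbb{P}^{n-3}$ that, by fixing $w$ and varying $v$, must be independent of $v$; call it $V$. Thus $V\subseteq T_v Z_i$ for every smooth $v\in Z_i$. Considering the projection $\pi_V:\mathbb{P}^n\dashrightarrow\mathbb{P}^2$ from $V$, for smooth $v\in Z_i\setminus V$ we have $T_v Z_i=\langle V,v\rangle=\pi_V^{-1}(\pi_V(v))$ by dimension, so $d\pi_V$ vanishes on $T_v Z_i$; hence $\pi_V(Z_i)$ is a single point, and $Z_i$ coincides with a fiber of $\pi_V$, namely a linear $\mathbb{P}^{n-2}$.

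With each $Z_i$ now a linear $\mathbb{P}^{n-2}$, every pairwise intersection $V_{ij}:=Z_i\cap Z_j$ is a linear $\mathbb{P}^{n-3}$. For any triple $Z_i,Z_j,Z_k$ not all lying in a common hyperplane, the span $H_{ij}:=\langle Z_i,Z_j\rangle$ is a hyperplane (since $\dim V_{ij}=n-3$), and $Z_k\cap H_{ij}$ is a linear $\mathbb{P}^{n-3}$ hyperplane section of $Z_k$. Both $V_{ik}$ and $V_{jk}$ are contained in $Z_k\cap H_{ij}$ with matching dimension $n-3$, so $V_{ik}=V_{jk}=Z_k\cap H_{ij}$, and by symmetry $V_{ij}$ also equals this common space. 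Using the hypothesis that $Z$ is not in a hyperplane (introducing a fourth component if a given triple happens to lie in a common hyperplane) then propagates this to all pairs, so every $V_{ij}$ coincides with a single linear $V$ of codimension three, and $V\subseteq Z_i$ for every $i$, establishing (2).

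The main obstacle is the linearity step: combining the Zak-type fact about secant varieties (which requires some care when the components of $Z$ are singular) with the Terracini/Gauss-map analysis to produce a fixed $(n-3)$-dimensional linear subspace $V$ lying inside every tangent space of $Z_i$, and then running the projection $\pi_V$ to force $Z_i$ itself to be linear. The final combinatorial identification of all $V_{ij}$ with a common $V$ is then comparatively routine.
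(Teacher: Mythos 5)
Your overall strategy (project to show codimension-$2$ pairwise intersections, prove each component is linear, then a combinatorial argument identifying the pairwise intersections with a common $V$) runs roughly parallel to the paper's, and the final combinatorial step is essentially the same. But there is a real gap in the linearity step that you yourself flag and do not close. You invoke Zak's theorem on tangencies to conclude that each irreducible component $Z_i$ with $S(Z_i)\neq\mathbb{P}^n$ is degenerate. Zak's theorem requires $Z_i$ to be \emph{smooth}, and nothing in the hypotheses of the lemma guarantees this; the components of $Z$ can certainly be singular. The statement you need is in fact true for singular $Z_i$, but it is not a consequence of the theorem you cite, and you do not supply an alternative argument, so the proof as written does not go through.

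The paper closes exactly this gap with an elementary argument that makes no smoothness assumption: if $Z_i$ is irreducible and not linear and $S(Z_i)\neq\mathbb{P}^n$, then $\dim S(Z_i)=n-1$, and the irreducibility of $S(Z_i)$ forces $S(Z_i)=C_p(Z_i)$ for a general $p\in Z_i$; one then shows $S(Z_i)$ is a cone with vertex any general point of itself, hence a linear subspace, hence a hyperplane. This not only proves degeneracy of $Z_i$, it does more: since the join $J(Z_i,Z_j)\subseteq S(Z)$ is likewise a hyperplane containing $Z_i$, and $Z_i$ spans $S(Z_i)$, one gets $Z_j\subseteq S(Z_i)$ for every $j$, so $Z$ lies in a hyperplane and we are in case (1). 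Consequently, in case (2) \emph{all} components must be linear automatically, and there is no need for your Terracini/Gauss-map analysis and the auxiliary projection from $V$; those steps, while they appear to check out, are doing work the cone argument gives for free. If you want to preserve your structure, the fix is to replace the Zak citation with the $S(Z_i)=C_p(Z_i)$ argument (or prove the degeneracy claim for singular $Z_i$ by some other means), after which the rest of your proof would be correct but longer than necessary.
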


\begin{proof}
For $p\in\mathbb{P}^{n}$ let $C_{p}(Z)$ be the cone over $Z$ with
vertex $p$ (if $p\in Z$ this is the closure of the union of
secant lines containing $p$). First assume that $Z$ is irreducible
and not a linear subspace. Then for general points $p,q\in Z$ the
line $\overline{pq}$ joining $p$ and $q$ is not contained in $Z$
and since $S(Z)\neq\mathbb{P}^{n}$ we have $\dim S(Z)=n-1$. Since
$S(Z)$ is irreducible in this case, we also have $S(Z)=C_{p}(Z)$
for a general point $p\in Z$. It follows that if $p\in Z$ and $q\in S(Z)$
($p\neq q$) then $\overline{pq}\subseteq S(Z)$. Hence we also have
$S(Z)=C_{q}(Z)$ for a general point $q\in S(Z)$. But then we have
$\overline{pq}\subseteq S(Z)$ for any two points $p\neq q\in S(Z)$,
and $S(Z)$ has to be a linear subspace, hence a hyperplane for dimension
reason. By a similar argument, if $Z_{1}$ and $Z_{2}$ are two irreducible
component of $Z$, then their join (i.e. the closure of the union
of lines $\overline{pq}$ where $p\in Z_{1}$, $q\in Z_{2}$) is also
a hyperplane.

Now let $Z_{1},\cdots,Z_{r}$ be the irreducible components of $Z$.
If one of the components, say, $Z_{1}$ is not a linear subspace,
then it is contained in a unique hyperplane $H=S(Z_{1})$. Since $Z_{i}$
and $Z_{1}$ are also contained in a hyperplane for all $i$, we must
have $Z_{i}\subseteq H$ as well, thus $Z$ is contained in the hyperplane
$H$. If all the components $Z_{i}$ are linear subspaces, then for
every triple $(Z_{1},Z_{2},Z_{3})$ either they are contained in a
hyperplne or we have 
\[
Z_{i}\cap Z_{j}=H_{1}\cap H_{2}\cap H_{3}=Z_{1}\cap Z_{2}\cap Z_{3}\,(\forall i,j)
\]
where $H_{i}$ is the hyperplane containing the subspaces except $Z_{i}$.
We have $V=\cap_{i=1}^{3}H_{i}\neq\emptyset$ since $n\ge3$. It is
then not hard to see that either all $Z_{i}$'s are contained in a
hyperplane or they all contain a common linear subspace $V$ of codimension
three.
\end{proof}

Now we give the proof of Theorem \ref{main:blowupPn}.

\begin{proof}[Proof of Theorem \ref{main:blowupPn}]
Let $E$ be an $f$-exceptional divisor and $Z$ its center on $Y=\mathbb{P}^{n}$.
As in previous sections, let $\mathcal{I}_{k}=\mathcal{I}_{k,E}=\{f\in\mathcal{O}_{Y}|\nu_{E}(f)\ge ak\}\subseteq\mathcal{O}_{Y}$
where $\nu_{E}$ is the divisorial valuation corresponding to $E$
and $a=a(E;Y,0)$. The birational contraction $f$ induces an inclusion
$H^{0}(X,\mathcal{O}_{X}(-kK_{X}))\rightarrow H^{0}(Y,\mathcal{O}_{Y}(-kK_{Y}))$
whose image is contained in $H^{0}(Y,\mathcal{I}_{k}(-kK_{Y}))$,
hence we have
\begin{equation}
\limsup_{k\rightarrow\infty}\frac{s(\mathcal{I}_{k}(-kK_{Y}))}{k}\ge\epsilon_{m}(-K_{X})=n\label{eq:2}
\end{equation}

Suppose there exists $\lambda>1$ and $x\in Z$ such that $\mathcal{I}_{k}\subseteq\mathfrak{m}_{x}^{\lambda k}$
for $k\gg0$, then by the same argument as in the proof of Theorem \ref{main:P^n}
we have
\[
\limsup_{k\rightarrow\infty}\frac{s(\mathcal{I}_{k}(-kK_{Y}))}{k}\le\limsup_{k\rightarrow\infty}\frac{s(\mathfrak{m}_{x}^{\lambda k}(-kK_{Y}))}{k}\le n+1-\lambda<n
\]
which contradicts (\ref{eq:2}). It follows that for any $\lambda>1$
we have $\mathcal{I}_{k}\not\subseteq\mathfrak{m}_{x}^{\lambda k}$
for $k\gg0$, thus by Lemma \ref{lem:equality}, $Z$ has codimension
two. Let $W$ be the union of all centers of $f$-exceptional divisors,
then $W$ is a subscheme of pure codimension two and by Lemma \ref{lem:inequality} we have $\mathcal{I}_{k}\subseteq\mathcal{I}_{W}^{(k)}$ (the $k$-th symbolic power of $\cI_W$).
Analogous to (\ref{eq:2}) we have
\begin{equation}
\limsup_{k\rightarrow\infty}\frac{s(\mathcal{I}_{W}^{(k)}(-kK_{Y}))}{k}\ge\epsilon_{m}(-K_{X})=n\label{eq:3}
\end{equation}

\begin{claim}
$W$ is contained in a hyperplane.
\end{claim}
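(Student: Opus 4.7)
The plan is to apply Lemma~\ref{lem:secant} to $Z = W$, which has pure codimension two by Lemma~\ref{lem:equality}. The first main step will be to rule out $S(W) = \bP^n$, after which Lemma~\ref{lem:secant} gives two alternatives: either $W$ is contained in a hyperplane (which is what we want) or $W$ is a union of codimension-two linear subspaces $W_1, \ldots, W_r$ sharing a common codimension-three linear subspace $V$. Handling this second alternative when the $W_i$ do not already lie in a common hyperplane is where the main difficulty will lie.

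To rule out $S(W) = \bP^n$, I would argue by contradiction: a very general $x \in \bP^n$ would lie on a secant line $L$ of $W$ meeting $W$ at two distinct smooth points $p, q$. Since sections of $\cI_W^{(k)}(-kK_Y)$ vanish to order at least $k$ at both points, applying Lemma~\ref{lem:sbound} to $L$ gives
\[ s(\cI_W^{(k)}(-kK_Y), x) \le (-kK_Y \cdot L) - 2k = k(n-1), \]
in contradiction with (\ref{eq:3}).

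In the alternative case, let $\pi : \bP^n \dashrightarrow \bP^2$ be the linear projection away from $V$ and set $w_i = \pi(W_i) \in \bP^2$. If $w_1, \ldots, w_r$ are collinear on a line $\ell \subset \bP^2$, then every $W_i$ lies in the hyperplane $\overline{\pi^{-1}(\ell)}$ and we are done; so the hard part will be to derive a contradiction in the remaining situation, where three components, say $W_1, W_2, W_3$, have noncollinear images $w_1, w_2, w_3$.

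In that subcase I would choose homogeneous coordinates on $\bP^n$ so that $V = \{x_0 = x_1 = x_2 = 0\}$ and the three components are the coordinate codimension-two subspaces $\{x_1 = x_2 = 0\}$, $\{x_0 = x_2 = 0\}$, $\{x_0 = x_1 = 0\}$. Any section $P$ of $\cI_W^{(k)}(-kK_Y)$ then lies in the monomial ideal $(x_1,x_2)^k \cap (x_0,x_2)^k \cap (x_0,x_1)^k$, and a quick exponent check (each monomial $x_0^a x_1^b x_2^c$ times a monomial in $x_3, \ldots, x_n$ appearing in $P$ must satisfy $a+b, b+c, c+a \ge k$, whence $a+b+c \ge \lceil 3k/2 \rceil$) shows that $P$ vanishes along $V$ to order at least $\lceil 3k/2 \rceil$. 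Applying Lemma~\ref{lem:sbound} to the line $L = \overline{xv}$ joining a very general $x \in \bP^n$ to a point $v \in V$, which meets the base locus only at $v$ once $x$ avoids $W \cup V$, yields
\[ s(\cI_W^{(k)}(-kK_Y), x) \le k(n+1) - \lceil 3k/2 \rceil \le k\bigl(n - \tfrac{1}{2}\bigr), \]
again contradicting (\ref{eq:3}) and finishing the proof of the claim.
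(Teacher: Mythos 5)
Your proof is correct and follows the same overall strategy as the paper: rule out $S(W)=\bP^n$ with a secant-line computation, invoke Lemma~\ref{lem:secant}, and derive a contradiction in the alternative case. The first step is essentially identical modulo presentation (the paper works on the blow-up of the two secant points and intersects with the strict transform of the line; you compute directly; the numerics agree). Your second step is worth noting: you carefully work with the symbolic power $\cI_W^{(k)}=\bigcap_i\cI_{W_i}^k$, which is the ideal actually appearing in (\ref{eq:3}), and the exponent check ($a+b, b+c, c+a\ge k$ forces $a+b+c\ge\lceil 3k/2\rceil$) gives $\cI_W^{(k)}\subseteq\fm_v^{\lceil 3k/2\rceil}$ and hence the bound $n-\tfrac12<n$. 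The paper at the corresponding step writes $\cI_W^k$ (ordinary power) in the display, for which $\cI_W^k\subseteq\cI_V^{2k}\subseteq\fm_x^{2k}$ does hold, and asserts this contradicts (\ref{eq:3}); but (\ref{eq:3}) concerns the symbolic power $\cI_W^{(k)}$, which is strictly larger in general (e.g.\ $x_0x_1x_2\in\cI_W^{(2)}\setminus\fm_x^4$ in the three-coordinate-axis example), so an upper bound on $s(\cI_W^k)$ does not immediately bound $s(\cI_W^{(k)})$. Your $\lceil 3k/2\rceil$ is the sharp vanishing order for the symbolic power, and it still yields the needed contradiction, so in effect you have patched a small imprecision in the paper's argument. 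One expository point: the bounds $s(\cI_W^{(k)}(-kK_Y),x)\le k(n+1)-2k$ and $\le k(n+1)-\lceil 3k/2\rceil$ are not literally consequences of the statement of Lemma~\ref{lem:sbound}, but rather of its proof (choose $D$ with $\mult_x D=s$ and $C\not\subseteq D$, then add the contribution at the base point to the intersection count); you may wish to spell that out rather than citing the lemma directly.
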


\begin{proof}
First suppose that $S(W)=\mathbb{P}^{n}$, then for a very general
point $p\in\mathbb{P}^{n}$, there exists $x\neq y\in W$ such that
$p\in\overline{xy}$. Let $\sigma:\hat{Y}\rightarrow Y$ be the blowup
of $x$ and $y$ with exceptional divisors $E_{x}$, $E_{y}$. Since
$\mathcal{I}_{W}\subseteq\mathfrak{m}_{x}\otimes\mathfrak{m}_{y}$,
by (\ref{eq:3}) we have
\[
\epsilon_{m}(L,p)=\limsup_{k\rightarrow\infty}\frac{s(\mathfrak{m}_{x}^{k}\otimes\mathfrak{m}_{y}^{k}(-kK_{Y}))}{k}\ge n
\]
where $L=\sigma^{*}(-K_{Y})-E_{x}-E_{y}$. On the other hand, as $L$
is nef, we have $\epsilon_{m}(L,p)=\epsilon(L,p)\le(L\cdot l)=n-1$
where $l$ is the strict transform of the line $\overline{xy}$, a
contradiction. Hence $S(W)\neq\mathbb{P}^{n}$.

Now by Lemma \ref{lem:secant}, either $W$ is contained in a hyperplane
or there exists a linear subspace $V$ of codimension 3 such that
$W$ is a union of linear subspaces containing $V$. Suppose $W$
is not contained in a hyperplane (so we are in the second case), then
it is easy to see that $\mathcal{I}_{W}\subseteq\mathcal{I}_{V}^{2}\subseteq\mathfrak{m}_{x}^{2}$
for any $x\in V$, hence argue as before we have
\[
\limsup_{k\rightarrow\infty}\frac{s(\mathcal{I}_{W}^{k}(-kK_{Y}))}{k}\le\limsup_{k\rightarrow\infty}\frac{s(\mathfrak{m}_{x}^{2k}(-kK_{Y}))}{k}\le n-1
\]
which contradicts (\ref{eq:3}). The claim then follows.
\end{proof}

In particular, $Z$ is also contained in a hyperplane, hence is a
complete intersection in $\mathbb{P}^{n}$.

\begin{claim}
There exists a hyperplane $H$ containing $Z$ such that $E$ is obtained
by the blowup of a hypersurface in $H$.
\end{claim}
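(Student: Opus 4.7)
I would take $H$ to be a hyperplane containing $Z$ (supplied or suggested by the previous claim, which yields a hyperplane containing $W\supseteq Z$). By Lemma~\ref{lem:equality} applied to $\bP^n$, $E$ appears as the last exceptional divisor of a successive blowup $X_m\to\cdots\to X_0=\bP^n$ with centers $Z_0=Z,Z_1,\ldots,Z_{m-1}$ mapping birationally onto $Z$, and at a general $p\in Z$ the valuation $\nu_E$ is monomial of weights $(1,m)$ in 2-dimensional transversal slice coordinates $(s,t)$ with $(s=t=0)=Z$ locally and $(t=0)$ extending to the germ of a smooth hypersurface through $Z$. Since $\dim Z=n-2=\dim H-1$, $Z$ is automatically a hypersurface in $H$, so the first blowup $X_1=\mathrm{Bl}_Z\bP^n$ is already along a hypersurface in $H$. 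The core of the proof is to show that this ``$(t=0)$'' direction at $p$ matches the tangent direction of $H$ at $p$; once this is established, at a general point of $Z$ the section $Z_1$ of the $\bP^1$-bundle $E_1\to Z$ coincides with $H_1\cap E_1$, where $H_1$ is the strict transform of $H$ in $X_1$, so by irreducibility $Z_1\subseteq H_1$ throughout, and the argument iterates on $X_1$ (whose restricted valuation is monomial of weights $(1,m-1)$ tangent to $H_1$ along $Z_1$) to give the full successive blowup description of $E$ along hypersurfaces in strict transforms of $H$.

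To establish the direction match, using that $Z$ is a complete intersection $H\cap D$ for some hypersurface $D$ (as established earlier in the proof), I would distinguish cases. When $Z$ is a codimension-two linear subspace, the hyperplanes through $Z$ form a pencil, and a suitable element has tangent direction at $p$ matching the monomial direction; I would take this as $H$. Otherwise, $H$ is the unique hyperplane containing $Z$, and I would assume for contradiction that the monomial direction is that of $D$ rather than that of $H$. Then local coordinates may be chosen with $s=h$ (equation of $H$) and $t=d$ (equation of $D$), so $\cI_k$ is locally contained in $(h^{mk},h^{m(k-1)}d,\ldots,d^k)$. The family of lines $C\subseteq\bP^n$ passing through some $p\in Z$ and tangent to $D$ at $p$ has dimension $2n-3$, which is $\geq n$ for $n\geq 3$ and therefore dominates $\bP^n$. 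Picking a very general $y\in\bP^n$ and such a $C$ through $y$ and a general $p\in Z$, any $\sigma\in H^0(\cI_k(-kK_Y))$ satisfies $\mathrm{ord}_p(\sigma|_C)\geq 2k$: each generator $h^{m(k-j)}d^j$ restricts with multiplicity at least $m(k-j)\cdot 1+j\cdot 2\geq 2k$, using $\mathrm{ord}_p(h|_C)=1$ and $\mathrm{ord}_p(d|_C)\geq 2$ from the tangency. Restricting to $C$ and invoking Lemma~\ref{lem:sbound} together with this additional vanishing at $p$ then yields $s(\cI_k(-kK_Y),y)\leq k(n-1)$, contradicting $\epsilon_m(-K_X,y)=n$ at a very general point.

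The main obstacle is precisely this tangent-line contradiction. Its delicate points are: verifying that the monomial direction must be the tangent direction of some hyperplane through $Z$ (ruling out, in the nonlinear case, that it could come from a higher-degree hypersurface not aligned with any hyperplane section), and confirming that the tangent-line family genuinely dominates $\bP^n$ so that a very general $y$ is covered and the $\limsup$ bound applies. Both rely on the standing hypothesis $n\geq 3$ and on the complete-intersection structure of $Z$ inside $H$ established in the preceding parts of the proof.
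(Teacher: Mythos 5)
The claim you are proving is a genuinely different route from the paper's, and the two gaps you flag at the end of your sketch are, in fact, genuine.

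Your approach tries to identify, at a general point of $Z$, the ``monomial direction'' (the tangent direction in which the center $Z_1$ of the first blowup sits) and match it to a hyperplane $H$ directly. The paper instead argues by induction on the length $m$: it produces the intermediate crepant models $\bar X_i$, checks that $\epsilon_m(-K_{X_i})\ge n$ propagates up the tower, and at the inductive step constrains the center $Z_{m-1}$, viewed as a birational section of the $\bP^1$-bundle $E_{m-1}=\bP(\cN_{D/Y})$, by showing it is disjoint from the sections $C_p(D)\cap E_{m-1}$ cut out by the cones $C_p(D)$ for very general $p$; the structure of $\cN_{D/Y}\cong\cO_D(1)\oplus\cO_D(d)$ then forces $Z_{m-1}$ to be the section corresponding to the surjection $\cN_{D/Y}\twoheadrightarrow\cO_D(1)$, i.e.\ the trace of a hyperplane. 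The paper's argument applies uniformly to an \emph{arbitrary} section $Z_{m-1}$, which is precisely what your approach does not yet handle.

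Concretely, there are two gaps in your sketch. First, the dichotomy ``the monomial direction is that of $H$ or of $D$'' is not exhaustive: $Z_1$ is an arbitrary section of $\bP(\cN_{Z/\bP^n})\to Z$, and (for $d>1$) there is a whole family of line subbundles $\cO_Z(1)\hookrightarrow\cN_{Z/\bP^n}$ besides the two coordinate ones, each giving a candidate ``monomial direction''; your tangent-line argument is tailored to the $D$-direction and does not rule these out. In the linear case there is a related issue: you want to choose one hyperplane from the pencil matching the monomial direction at a general $p\in Z$, but that direction is a priori a \emph{varying} section of $\bP(\cN_{Z/\bP^n})$. For $n\ge4$ and $Z$ linear, sections of $\bP^1\times\bP^{n-2}\to\bP^{n-2}$ are automatically constant, but for $n=3$ a section of $\bP^1\times\bP^1\to\bP^1$ need not be, so the reduction requires an argument. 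Second, the inference ``the family of tangent lines has dimension $2n-3\ge n$, hence dominates $\bP^n$'' is invalid: a family of curves of dimension $\ge n$ can easily have union of dimension $<n$ (e.g.\ all lines in a fixed hyperplane). Here the $d=1$ case is an actual counterexample to dominance. For $d>1$ dominance is plausible, but you would need to prove it (for instance by showing the Gauss map of $D$ along $Z$ is nonconstant and that the resulting family of tangent hyperplanes sweeps out $\bP^n$), and even granting dominance you have only excluded the $D$-direction, not an arbitrary one. Both of the ``delicate points'' you name in your final paragraph are therefore real obstructions, and the cone/disjoint-sections argument in the paper is designed exactly to circumvent them.
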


\begin{proof}
By Lemma \ref{lem:equality}, $E$ is obtained as the last exceptional divisor of a successive
blowup $X_{m}\rightarrow\cdots\rightarrow X_{0}=Y$ such that for
$i=0,\cdots,m-1$, the center $Z_{i}$ of each blowup $X_{i+1}\rightarrow X_{i}$
maps birationally to $Z$ and is not contained in $E_{1},\cdots,E_{i-1}$
where $E_{j}$ is the exceptional divisor of $X_{j}\rightarrow X_{j-1}$.
In particular $E=E_{m}$. After localizing at the generic point of
$Z$ it is not hard to see that there is a crepant birational contraction
$g_{i}:X_{i}\dashrightarrow\bar{X}_{i}$ for each $i$ that contracts
$E_{1},\cdots,E_{i-1}$. Since $\bar{X}_{m}\dashrightarrow Y=\mathbb{P}^{n}$
extracts exactly the divisor $E$, we have $\epsilon_{m}(-K_{\bar{X}})\ge n$
by (\ref{eq:3}). Since $g_{m}$ is crepant, i.e. $g_{m}^{*}K_{\bar{X}_{m}}=K_{X_{m}}$,
we have $\epsilon_{m}(-K_{X_{m}})\ge n$ as well. By Corollary \ref{cor:nondecrease},
$\epsilon_{m}(-K_{X_{i}})\ge n$ for $i=1,\cdots,m$. 

We now prove the claim by induction on $m$. If $m=1$, then $X_{1}$
is just the blowup of $Z$, which is a hypersurface in a hyperplane.
Hence in what follows, assume $m>1$.

By induction hypothesis, we may assume $h:\bar{X}_{m-1}\rightarrow\mathbb{P}^{n}$ 
is the blowup of a hypersurface $D$ contained in a hyperplane $H$ (a priori $h$ is only birational from the previous construction, but since moving Seshadri constants are preserved under small birational map, we may replace $h$ by the actual blowup of $D$).
By Lemma \ref{lem:equality}, $Z_{m-1}$ is a birational section of
the $\mathbb{P}^{1}$-bundle $E_{m-1}=\mathbb{P}(\mathcal{N}_{D/Y})$
over $D_{\mathrm{red}}$ such that $Z_{m-1}$ is contained in the
smooth locus of $\bar{X}_{m-1}$. Note that $\mathcal{N}_{D/Y}=\mathcal{O}_{D}(1)\oplus\mathcal{O}_{D}(d)$
where $d=\deg D$, since $D$ is a complete intersection. We also
have $-K_{\bar{X}_{m-1}}\sim h^{*}(nH)+H'$ where $H'$ is the
strict transform of $H$. Let $p$ be a very general point in $\mathbb{P}^{n}$,
$q\in D$ and $l_{pq}$ the strict transform of the line $\overline{pq}$
on $\bar{X}_{m-1}$. If $l_{pq}$ intersects $Z_{m-1}$ then by Lemma \ref{lem:sbound}
we have $\epsilon_{m}(-K_{\tilde{X}})\le(-K_{\tilde{X}}\cdot l_{pq}) <(-K_{\bar{X}_{m-1}}\cdot l_{pq})=n$
where $\tilde{X}$ is the blowup of $\bar{X}_{m-1}$ at $Z_{m-1}$;
on the other hand there is a crepant birational contraction $X_{m}\dashrightarrow\tilde{X}$,
hence $\epsilon_{m}(-K_{\tilde{X}})=\epsilon_{m}(-K_{X_{m}})\ge n$,
a contradiction. It follows that $Z_{m-1}$ is disjoint from the strict
transform $C_{p}(D)$ of the cone over $D$ with vertex $p$. Since
$C_{p}(D)$ intersects $E_{m-1}=\mathbb{P}(\mathcal{N}_{D/Y})$ at
the section corresponding to the surjection $\mathcal{N}_{D/Y}\twoheadrightarrow\mathcal{O}_{D}(d)$,
$Z_{m-1}$ must be a section corresponding to $\mathcal{N}_{D/Y}\twoheadrightarrow\mathcal{O}_{D}(1)$
(if $d=1$ there are infinitely many such surjections). In other words,
$Z_{m-1}$ is the intersection of $E_{m-1}$ with the strict transform
of a hyperplane (which is exactly $H$ if $d>1$). Hence $E=E_{m}$
is obtained by successively blowing up its center in the strict transform
of $H$. If $H=(h=0)$ and $Z=(h=f=0)$ then $\bar{X}_{m}$ is the
blowup of $(h=f^{m}=0)$ which is a still hypersurface in $H$. The
claim now follows.
\end{proof}

Let $E_{1},\cdots,E_{m}$ be the $f$-exceptional divisors with centers
$Z_{1},\cdots,Z_{m}$ in $\mathbb{P}^{n}$. By the previous claims,
we can choose hyperplanes $H$, $H_{1},\cdots,H_{m}$ such that $\cup Z_{i}\subseteq H$
and $E_{i}$ is obtained by blowing up a hypersurface $D_{i}$ in
$H_{i}$. We now show that it is possible to choose $H=H_{1}=\cdots=H_{m}$.

Notice that if $D_{i}$ is a linear subspace, then we may choose $H_{i}$
as any hyperplane that contains $D_{i}$. On the other hand if $d_{i}=\deg D_{i}\ge2$
then the hyperplane $H_{i}$ is unique. If for every $i=1,\cdots,m$
we have either $D_{i}$ is a linear subspace or $\deg Z_{i}\ge2$
then we can simply take $H_{i}=H$. Hence we may assume that $Z_{1}$
is a linear subspace, $d_{1}\ge2$ and proceed to show that we can
take $H_{i}=H_{1}$ for all $i>1$. Let $H_{1}=(h_{1}=0)$. There
are two cases to consider:

\subsection*{Case 1} $Z_{i}=Z_{1}$. If $d_{1}=1$ then we may just take $H_{i}=H_{1}$.
Assume $d_{i}\ge2$ and $H_{i}\neq H_{1}$. Let $H_{i}=(h=0)$, then
it is not hard to see that $\mathcal{I}_{D_{1}}\subseteq(h_{1},h^{2})$
and $\mathcal{I}_{D_{i}}\subseteq(h,h_{1}^{2})$, hence $\mathcal{I}_{D_{1}}^{k}\cap\mathcal{I}_{D_{i}}^{k}\subseteq(h,h_{1})^{\frac{4}{3}k}$.
Since $\mathcal{J}_{k}=\mathcal{I}_{k,E_1}\cap\mathcal{I}_{k,E_i}=\mathcal{I}_{D_{1}}^{k}\cap\mathcal{I}_{D_{i}}^{k}\subseteq\mathfrak{m}_{x}^{\frac{4}{3}k}$
for any $x\in Z_{1}=Z_{i}$, we have
\[
\epsilon_{m}(-K_{X})\le\limsup_{k\rightarrow\infty}\frac{s(\mathcal{J}_{k}(-kK_{Y}))}{k}\le\limsup_{k\rightarrow\infty}\frac{s(\mathfrak{m}_{x}^{\frac{4}{3}k}(-kK_{Y}))}{k}\le n+1-\frac{4}{3}<n
\]
a contradiction. Hence $H_{i}=H_{1}$.

\subsection*{Case 2} $Z_{i}\neq Z_{1}$. Since $n\ge3$ and each $Z_{i}$ has codimension
one in $H$, there exists some $y\in Z_{1}\cap Z_{i}$. Suppose $H\neq H_{1}$ and let $H=(h=0)$, then $Z_i=(h=f=0)$ for some $f$. We then also have $\cI_{Z_1}=(h_1,h)$, $\mathcal{I}_{D_{1}}\subseteq(h_{1},h^{2})$ and $\mathcal{I}_{D_{i}}\subseteq(h,f)$. Let $\mathcal{J}_{k}=\mathcal{I}_{D_{1}}^{k}\cap\mathcal{I}_{D_{i}}^{k}\subseteq (h,f)^k$
as in the first case and let $g\in\mathcal{O}_{Y,y}$ be an element
in $\mathcal{J}_{k}$. Since $g\in\mathcal{I}_{D_{1}}^{k}$ we may
write $g=\sum_{2p\ge k}a_{p}h_{1}^{k-p}h^{2p}+\sum_{2p<k}a_{p}h_{1}^{k-p}h^{2p}\coloneqq g_{1}+g_{2}$
where $a_{p}\in\mathcal{O}_{Y,y}$. It is clear that $g_{1}\in(h,f)^{k}$
(hence $g_{2}\in(h,f)^{k}$ as well) and $\mathrm{mult}_{y}(g_{1})\ge\frac{3}{2}k$.
On the other hand we have $g_{2}=h_{1}^{\frac{k}{2}}u$ for some $u\in\mathcal{O}_{Y,y}$.
Since $h,f,h_{1}$ form a regular sequence at $y$, by \cite[\S 15, Theorem 27]{matsumura} we have $u\in(h,f)^{k}$, hence $\mathrm{mult}_{y}(u)\ge k$ and $\mathrm{mult}_{y}(g_{2})\ge\frac{3}{2}k$.
It follows that $\mathrm{mult}_{y}(g)\ge\frac{3}{2}k$, hence $\mathcal{J}_{k}\subseteq\mathfrak{m}_{y}^{\frac{3}{2}k}$,
and a similar argument as in the first case yields $\epsilon_{m}(-K_{X})\le n-\frac{1}{2}<n$,
a contradiction. Therefore, $H=H_{1}$ and we have $H_i=H$ if either $D_i$ is a linear space or $\deg Z_i\ge 2$. If $Z_i$ is a linear space and $d_i=\deg D_i\ge 2$ then interchanging the role of $D_1$ and $D_i$ in the above proof we have $H=H_i$ as well. Thus in any case we have $H_1=H_i$.

\smallskip
Now that every $E_{i}$ is obtained as the blow up of a hypersurface in the same hyperplane $H$, $X$ is isomorphic in codimension one to a successive blowup of $\mathbb{P}^{n}$ along hypersurfaces in the strict transforms of $H$.
\end{proof}

\section{Examples and further questions} \label{sec:example}

In this last section we exhibit some examples and propose a few interesting further questions.

Our first question concerns the optimal bound $M(n,\epsilon)$ of $\vol(-K_X)$ in Theorem \ref{main:volbdd}. Recall that by the proof Theorem \ref{main:volbdd}, we can already take $M(n,\epsilon)=O(\frac{n^{2n}}{\epsilon^n})$. However, the calculation in Example \ref{exa:wp} suggests that an improvement might exist and the optimal bound should be given by $O(\frac{n^n}{\epsilon})$.

\begin{que}
Is is possible to take $M(n,\epsilon)=O(\frac{n^n}{\epsilon})$ in Theorem \ref{main:volbdd}?
\end{que}

Indeed, by the classification in \cite{lz} we have $M(n,1)=(n+1)^n$  when $\epsilon=1$ and the maximum is achieved by the projective space. On the other hand, in the surface case, as the anticanonical volume and moving Seshadri constant are both non-decreasing when taking minimal resolution and contracting $(-1)$-curves, it suffices to find $M(2,\epsilon)$ by examining all ruled surfaces and it follows that $M(2,\epsilon)=O(\epsilon^{-1})$. Interpolating these two results provides another evidence in favor of the above suggested optimal bound.

Along a different direction, in light of Theorem \ref{main:fanotype}, it is natural to ask:

\begin{que}
Let $X$ be a variety of dimension $n$ with $\epsilon_m(-K_X)\ge n$. Is it true that $X$ is of Fano type?
\end{que}

As mentioned in the introduction, a positive answer to this question would lead to a full classification of $X$ (up to small birational equivalence) with $\epsilon_m(-K_X)=n$.

\iffalse
\begin{expl}
Let $Y=\bP(1,a_1,\cdots,a_n)$ where $a_1\le\cdots\le a_n$ and $H=(x_0=0)$. Then $(Y,2H)$ is not lc, although $L=-(K_Y+2H)$ is ample and by Example \ref{exa:wp}, $\epsilon(L)\ge\frac{1}{a_n}(\sum_{i=1}^n a_i -1)$ can be arbitrarily close to $n$. Thus the assumption in Lemma \ref{lem:pair=n} is optimal. Of course $Y$ is still of Fano type. Now assume that the $a_i$'s are pairwise relatively prime so that $Y$ has only isolated singularities. Let $f(x_{0},\cdots,x_{n})$ be a general weighted homogeneous polynomial of degree $d\gg0$ such that $(f=0)$ is contained in the smooth locus of $Y$. Let $\pi:X\rightarrow Y$ be the blowup of the subscheme $Z=(x_{0}^{2}=f=0)$ with exceptional divisor $E$ and let $H$ also denote its strict transform on $X$. We have $-K_{X}\sim 2H+\pi^{*}L$, thus $\epsilon_{m}(-K_{X})\ge\epsilon_{m}(L)$ can be arbitrarily close to $n$. On the other hand we have $H\cong\mathbb{P}(a_{1},\cdots,a_{n})$, $H|_{H}=(1-\frac{d}{2})M$ where $M$ is the ample generator of $\mathrm{Cl}(H)$ and $-K_{X}|_{H}=(1+\sum_{i=1}^{n}a_{i}-d)M$. Hence if $d-(1+\sum_{i=1}^{n}a_{i})>\frac{d}{2}-1>0$ (which is satisfied as $d\gg0$) and $\Delta\sim_{\mathbb{Q}}-K_{X}$ is effective then the coefficient of $H$ in $\Delta$ is greater  than $1$. In particular, $(X,\Delta)$ is not lc and $X$ is not of Fano type.
\end{expl}
\fi

Observe that by Therem \ref{main:birbdd} and the classification in \cite{lz}, all varieties $X$ with $\epsilon_m(-K_X)=n$ are rational. We may therefore expect rationality under a weaker assumption on Seshadri constant. In particular, we ask:

\begin{que}
Let $X$ be a variety of dimension $n$. Suppose $\epsilon_m(-K_X)>n-1$, is it true that $X$ is rational? By Theorem \ref{main:birbdd}, this is equivalent to asking: if $X$ is a terminal Fano variety whose blowup at a smooth point is still Fano, is it true that $X$ is rational?
\end{que}

Obviously the question is nontrivial only when $\dim X\ge 3$. By \cite[Theorem 1.1]{bcw}, if $X$ is a smooth Fano variety such that $\mathrm{Bl}_x X$ is still Fano then $X$ is rational. The following examples seem to provide further evidence for a positive answer to this question.

\begin{expl}
Let $X=X_{d}\subseteq Y=\mathbb{P}(1^{n},k,l)$ be a general weighted hypersurface of degree $d<n+k+l$ where $l\ge\max\{2,k\}$, let $r=d-k-l$ and let $m\le d$ be the largest integer of the form $ak+bl$ where $a,b\ge0$. Then $-K_{X}\sim(n-r)H$ where $H=(x_0=0)$ and we claim that
\begin{equation}
\epsilon(-K_{X})\le\frac{(n-r)m}{kl}\label{eq:4}
\end{equation}
with equality when $d\le kl$. To see this, let $x\in X$ be a very general point, $V\subseteq Y$
the linear subspace defined by $(x_{0}=\cdots=x_{n-1}=0)$ and $C_{x}(V)\subseteq Y$
the cone over $V$ with vertex $x$. Since $X$ is general, $C_{x}(V)\cap X$ contains an irreducible curve $C\sim\frac{m}{d} H^{n-1}$ that is smooth at $x$ (if $m<d$ then $V$ is the other component in the intersection). We thus have $\epsilon(-K_X,x)\le (-K_X\cdot C)=\frac{(n-r)m}{kl}$. If $m\le d\le kl$ then $\epsilon(-K_X)\le n-r$. Let $0\le t\le n-r$ and let $\sigma:\hat{X}\rightarrow X$
be the blowup of $x$ with exceptional divisor $E$, we have $L=\sigma^{*}(-K_{X})-tE\sim(n-r-t)\sigma^{*}H+t(\sigma^{*}H-E)$
is effective. Since $H$ is nef and $\mathrm{Bs}(\sigma^{*}H-E)$ is contained in the strict transform of $C\cup V$, $L$ is nef if and only if $(L\cdot C)=(-K_{X}\cdot C)-t\ge0$.
Hence the equality in (\ref{eq:4}) holds when $d\le kl$.

On the other hand, we have $(-K_{X})^{n}=\frac{(n-r)^{n}d}{kl}$.
If $r\ge2$ then $\left(\frac{n-1}{n-r}\right)^{n}=\left(1+\frac{r-1}{n-r}\right)^{n}>1+(r-1)+\frac{1}{2}(r-1)^{2}=\frac{1}{2}(r^{2}+1)\ge r+\frac{1}{2}$
and we obtain
\[
\frac{(-K_{X})^{n}}{(n-1)^{n}}<\frac{d}{(r+\frac{1}{2})kl}=\frac{r+k+l}{(r+\frac{1}{2})kl}\le\frac{r+k+l}{rkl+1}\le1
\]
hence $(-K_{X})^{n}<(n-1)^{n}$ and $\epsilon(-K_{X})<n-1$. If $r=1$
and $\epsilon(-K_{X})>n-1$ then $(-K_{X})^{n}>(n-1)^{n}$ and we
have $d=k+l+1>kl$. It follows that $k=1$ or $k=l=2$. The latter
case does not occur since we then have $d=5$ and $m=4$, hence $\epsilon(-K_{X})\le n-1$
by (\ref{eq:4}). Thus $k=1$ and $d=l+2$. If $l=2$, then $X$ is
the double cover of $\mathbb{P}^{n}$ branched over a general quartic
hypersurface. In particular $X$ is smooth and $\epsilon(-K_X)\le n-1$ when $n\ge 3$ by \cite[Theorem 1.1]{bcw}.
%and we have $\epsilon(-K_{X})=\frac{4}{3}$ when $n=2$ (in which case $X$ is a del Pezzo surface of degree 2) by \cite[Th\'eor\`eme  1.3]{br06}, $\epsilon(-K_{X})=2$ when $n=3$ by \cite[Theorem 1.8]{ito-toric}, and $\epsilon(-K_{X})=n-1$ when $n\ge4$ since $X$ is covered by lines in this case. 
Hence we may assume $l\ge3$ and thus $d=l+2<2l$. Finally if $r\le0$ then $d\le k+l<2l$ unless $k=l$.
We conclude that if $\epsilon(-K_{X})>n-1$ and $n\ge3$ then we always
have $d<2l$ or $k=l=\frac{1}{2}d$. In both cases, by writing down the defining equation $f$ of $X$ it is not hard to see that $X$ is rational (if $d<2l$ then $f$ is linear on $x_{n+1}$ and if $k=l=\frac{1}{2}d$ then after a change of variables $f$ contains the term $x_nx_{n+1}$).
\end{expl}

\begin{expl}
Let $X=X_{6}\subseteq\mathbb{P}(1^{n-1},2^{2},3)=Y$ be a general
weighted hypersurface of degree 6. Then $-K_{X}\sim nH$ and $\epsilon(-K_{X})=\frac{2}{3}n$.
Indeed let $x\in X$ be a very general point and let $V\cong\mathbb{P}(1,2,2,3)\subseteq Y$
be the corresponding linear subspace containing $x$. Let $S=X\cap V$,
then $S$ is a Gorenstein log del Pezzo surface of degree 2 and $-K_{S}\sim2H|_{S}$.
By \cite[Th\'eor\`eme  1.3]{br06} we have $\epsilon(-K_{S})=\frac{4}{3}$, hence $\epsilon(H|_{S})=\frac{2}{3}<1$.
Similar to the previous example, the Seshadri constant of $H$ is the same
as its restriction on $S$, thus $\epsilon(-K_{X})=\frac{2}{3}n$
as claimed. Now if $\epsilon(-K_{X})>n-1$ then $n\le2$ and $X$
is again rational.
\end{expl}

Finally we may ask for the moving Seshadri version of Theorem \ref{main:n-1}.

\begin{que} \label{que:bdd}
Is the set of varieties $X$ of dimension $n$ with $\epsilon_m(-K_X)\ge n-1$ birationally bounded?
\end{que}

It is easy to see that the assumption about moving Seshadri constants here is optimal:

\begin{expl}
Let $C$ be a curve of genus $g$ and $D$ a divisor of degree $d\gg0$ on $C$. Consider $X=\bP_C(\cO_C\oplus\cO_C(-D))$. Let $F$ be a fiber of the natural projection $\pi:X\rightarrow C$ and $E$ the unique negative section. We have the Zariski decomposition \[-K_X=(1+\frac{2g-2}{d})E+(1-\frac{2g-2}{d})(E+dF)\]
which implies $\epsilon_m(-K_X)=(1-\frac{2g-2}{d})\epsilon(E+dF)=1-\frac{2g-2}{d}$. Varying $g$ an $d$ we see that for any $\epsilon>0$ the set of surfaces $X$ with $\epsilon_m(-K_X)>1-\epsilon$ is not birationally bounded. It is not hard to generalize this to $n$-dimensional examples by considering $\bP^{n-1}$-bundles over $C$.
\end{expl}

%Recall that with strict inequality in Question \ref{que:bdd}, a positive answer follows from Theorem \ref{main:birbdd}, although we need to use the solution of BAB conjecture. It is therefore desirable to obtain a proof of birational boundedness without appealing to such a deep result. 

\bibliography{ref}
\bibliographystyle{alpha}

\end{document}